\title{Optimal adaptive implicit time stepping}
\author{Michael Feischl and David Niederkofler}
\def\R{{\mathbb R}}
\def\N{{\mathbb N}}
\def\PP{{\mathcal P}}
\def\SS{{\mathcal S}}
\def\TT{{\mathcal T}}
\def\XX{{\mathcal X}}
\def\tend{{t_{\rm end}}}
\def\b1{{\boldsymbol{1}}}
\def\norm#1#2{\|#1\|_{#2}}
\def\set#1#2{\big\{#1\,:\,#2\big\}}
\def\eps{\varepsilon}
\def\normL2#1#2{\|#1\|_{L^2(#2)}}
\newcounter{constantsnumber}
\def\namec#1#2{%
 \ifthenelse{\equal{#1}{rel}}{C_{\rm rel}}{%
  \ifthenelse{\equal{#1}{mesh}}{C_{\rm mesh}}{%
  \ifthenelse{\equal{#1}{sz}}{C_{\rm sz}}{%
  \ifthenelse{\equal{#1}{dislocrel}}{C_{\rm dlr}}{%
  \ifthenelse{\equal{#1}{eff}}{C_{\rm eff}}{%
  \ifthenelse{\equal{#1}{main}}{C_{\rm V}}{%
  \ifthenelse{\equal{#1}{opt}}{C_{\rm opt}}{%
  \ifthenelse{\equal{#1}{normequiv}}{C_{\rm norm}}{%
  \ifthenelse{\equal{#1}{reliable}}{C_{\rm rel}}{%
  \ifthenelse{\equal{#1}{efficient}}{C_{\rm eff}}{%
  \ifthenelse{\equal{#1}{dlr}}{C_{\rm dlr}}{%
  \ifthenelse{\equal{#1}{stable}}{C_{\rm stab}}{%
  \ifthenelse{\equal{#1}{reduction}}{C_{\rm red}}{%
   \ifthenelse{\equal{#1}{unibound}}{C_{\rm hot}}{%
    \ifthenelse{\equal{#1}{hotConst}}{C_{\rm hot}}{%
   \ifthenelse{\equal{#1}{inverseK}}{C_{\rm K}}{%
  \ifthenelse{\equal{#1}{refined}}{C_{\rm ref}}{%
  \ifthenelse{\equal{#1}{estconv}}{C_{\rm est}}{%
  \ifthenelse{\equal{#1}{optimal}}{C_{\rm opt}}{%
  \ifthenelse{\equal{#1}{qo}}{C_{\rm qo}}{%
  \ifthenelse{\equal{#1}{mon}}{C_{\rm mon}}{%
  \ifthenelse{\equal{#1}{cea}}{C_{\mbox{\scriptsize C\'ea}}}{%
  \ifthenelse{\equal{#2}{newcounter}}{\refstepcounter{constantsnumber}\label{const#1}}{}C_{\ref{const#1}}}%
}}}}}}}}}}}}}}}}}}}}}}
\newcounter{contractionnumber}
\def\nameq#1#2{%
  \ifthenelse{\equal{#1}{reduction}}{q_{\rm red}}{%
  \ifthenelse{\equal{#1}{estconv}}{q_{\rm est}}{%
  \ifthenelse{\equal{#1}{cea}}{q_{\mbox{\scriptsize C\'ea}}}{%
  \ifthenelse{\equal{#2}{newcounter}}{\refstepcounter{contractionnumber}\label{contraction#1}}{}q_{\ref{contraction#1}}}%
}}}
\def\namer#1#2{%
  \ifthenelse{\equal{#1}{reduction}}{\rho_{\rm red}}{%
  \ifthenelse{\equal{#1}{estconv}}{\rho_{\rm est}}{%
  \ifthenelse{\equal{#1}{cea}}{\rho_{\mbox{\scriptsize C\'ea}}}{%
  \ifthenelse{\equal{#1}{qo}}{\rho_{\mbox{\scriptsize qo}}}{%
  \ifthenelse{\equal{#2}{newcounter}}{\refstepcounter{contractionnumber}\label{contraction#1}}{}\rho_{\ref{contraction#1}}}%
}}}}
\newtheorem{theorem}{Theorem}
\newtheorem{remark}[theorem]{Remark}
\newtheorem{lemma}[theorem]{Lemma}
\def\T{\mathbb T}
\def\tend{t_{\rm end}}
\date{\today}
\begin{document}
\begin{abstract}
We revisit adaptive time stepping, one of the classical topics of numerical analysis and computational engineering. While widely used in
application and subject of many theoretical works, a complete understanding is still missing. Apart from special cases, there does not exist a complete theory that shows how to choose the time steps such that convergence towards the exact solution is guaranteed with the optimal convergence rate. In this work, we use recent advances in adaptive mesh refinement to propose an adaptive time stepping algorithm that is mathematically guaranteed to be optimal in the sense that it achieves the best possible convergence of the error with respect to the number of time steps, and it can be implemented using a time stepping scheme as a black box.
\end{abstract} 
\keywords{time stepping, adaptive step size, mesh refinement, error estimator, optimality}
\thanks{Funded by the Deutsche Forschungsgemeinschaft (DFG, German Research Foundation) -- Project-ID 258734477 -- SFB 1173, the Austrian Science Fund (FWF)
under the special research program Taming complexity in PDE systems (grant SFB F65) as well as project I6667-N. Funding was received also from the European Research Council (ERC) under the
European Union’s Horizon 2020
research and innovation programme (Grant agreement No. 101125225).}
\maketitle

\section{Introduction}

Adaptivity in time and space is ubiquitous in engineering and physics (see, e.g.,~\cite{krogh,physics,signal} for early results in adaptive time stepping and~\cite{amrgeo,amrwaves,burstedde1} current applications in geophysics). Particularly, adaptive step size control goes back to the 1890s, when C. Runge used computations with halved step sizes to find reliable digits in his calculations~\cite[Page~164]{hairer}. 
While spatial singularities in linear equations are often geometry induced (and sometimes can even be treated in an a~priori fashion), non-linear time-dependent problems can evolve temporal singularities that form spontaneously and have to be tackled with adaptive time stepping. To that end, adaptive step size control was adopted decades ago in engineering to steer time stepping algorithms such as Runge-Kutta methods (see, e.g.,~\cite{krogh} for early results). 
Those classical time stepping methods (see, e.g.,~\cite[Section~II.4]{hairer} and~\cite{soderlind}) base the choice of step size on local information only, e.g., by comparing methods of different orders or by using information of previous 
steps in order to compute a guess for the size of the following step. This has the advantage that only one pass through the time-interval is necessary, but can lead to the paradoxical situation that the step size is reduced in each step such that the final time is never reached. This makes it very hard to give mathematical convergence guarantees and error estimates.

One way to circumvent this problem is given in~\cite{adaptiveG}, which employs reliable  error estimators 
as well as an intricate sequence of refinement and coarsening of step sizes. The authors furthermore derive an a~priori computable minimal step size (depending on the prescribed error tolerance). By not permitting steps-sizes below the minimal step size, the algorithm ensures that the final time is always reached in finitely many steps and the prescribed error tolerance is satisfied. This method, however, is very problem specific and does not yield mathematically guaranteed convergence rates. We also mention~\cite{kehlet} which develops a rigorous error estimator for time dependent problems which also takes round-off errors into account.

The goal of this work is to propose a new adaptive time stepping algorithm that avoids the problems of existing adaptive methods and is provably optimal in the sense of adaptive mesh refinement, see, e.g.,~\cite{stevenson07,ckns,Carstensen_2014}. Roughly (see Section~\ref{sec:adaptiveAlg} below for a precise statement), this means that there holds the estimate
\begin{align*}
    {\rm error}\lesssim (\#{\rm time steps})^{-s}
\end{align*}
 for the maximal possible convergence rate $s>0$.
Besides a very recent optimal adaptive time stepping algorithm for the heat equation~\cite{generalqo}, no adaptive time stepping method with this property can be found in the literature.
Even plain convergence of the error to zero (without rates) has only been proven recently in~\cite{firstparabolic,adaptiveG,adaptiveDG,gregor} for space-time problems.

We are interested in the initial value problem
\begin{align}
\label{eq:ode}
\begin{split}
     \partial_t y(t)&=F(t,y(t)), \quad \text{for} \quad t \in [t_0, \tend]\\
    y(t_0)&=y_0,
\end{split}
\end{align}
 with right-hand side $F: \R \times \R^d \to \R^d$. This could come from an ODE or from a semi-discretized PDE (see Section~\ref{sec:heatexample} below for an example with the heat equation).

Given a time stepping method (e.g., Runge-Kutta) \fbox{Solve} as well as an error estimator \fbox{Estimate} that is an upper bound for the error, we propose the following algorithm: Create an initial set of time steps $t_0<t_1<\ldots<t_N=t_{\rm end}$ and repeat steps 1--3.
\begin{itemize}
\item[1)] Run \fbox{Solve} on the current set of time steps
\item[2)] Compute error estimator \fbox{Estimate}
\item[3)] Introduce new time steps $t=(t_i+t_{i+1})/2$ where error estimator is large, goto 1.
\end{itemize}
We refer the reader to Algorithm~\ref{alg:adaptive} below for a more precise description.
In the plots below, we see a demonstration of this adaptive method for the classical Van-der-Pol equation $\partial_t x = y$ and $\partial_t y = 5(1-x^2)y-x$ with initial values $x(0)=y(0)=1$. This problem served as the standard test bed for adaptive methods in the early days and thus is a nice introductory example. We plot the approximation of $t\mapsto x(t)$ over the set of time steps for different iterations $\ell$ of the adaptive algorithm. We clearly see that with an increase in the number of iterations the time steps get smaller in regions where the solution is more complex. The left plot zooms into the time interval $[0,10]$ and shows that the first oscillation is not picked up by the initial pass but the adaptive algorithm refines in such a way that the second iteration $\ell=1$ already resolves the oscillation partially. The plot on the right shows the time interval $[0,20]$ to demonstrate that all oscillations are resolved simultaneously.
\begin{center}
\includegraphics[width=0.49\textwidth, trim=80 30 80 40, clip]{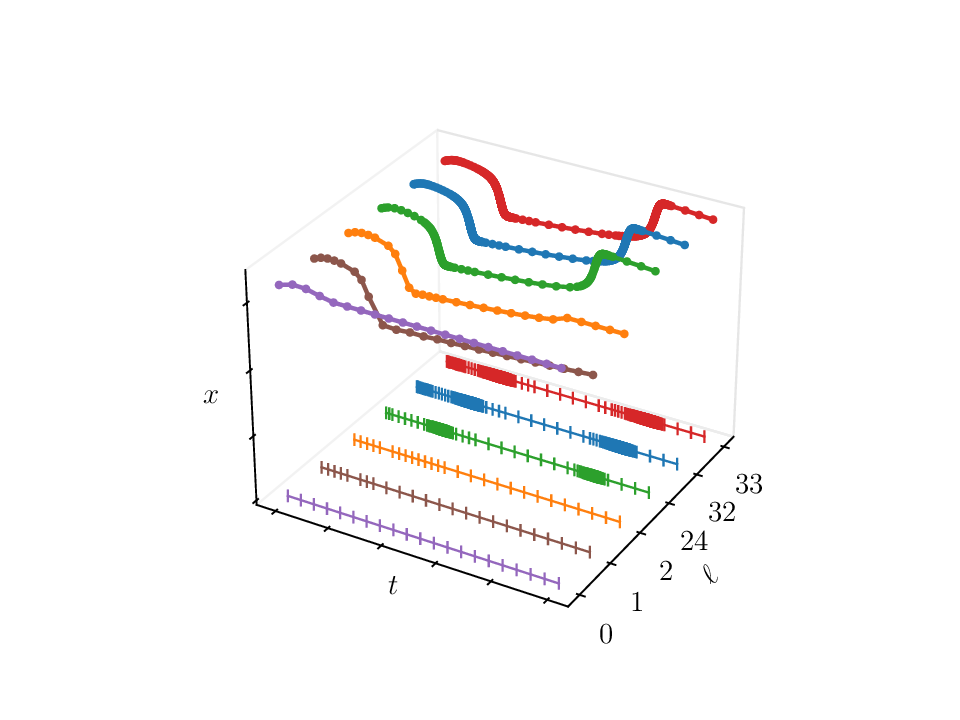}
\includegraphics[width=0.49\textwidth, trim=80 30 80 40, clip]{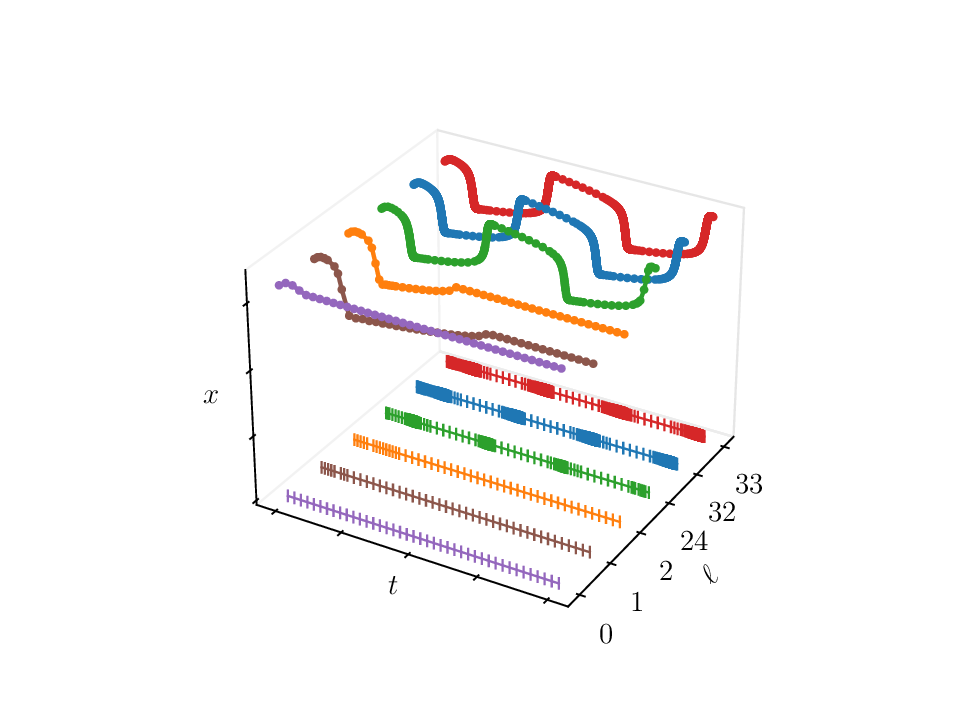}
\end{center}
Note that this adaptive procedure is very similar to adaptive mesh refinement in the stationary case by refining the time intervals and thus introducing finer step sizes and a similar idea can already be found in~\cite{estep}. This necessitates several passes through the time interval in order to obtain a good approximation of the exact solution. However, as we will see in the proof of Theorem~\ref{thm:optim} below, it is a consequence of linear convergence of the algorithm that the cost overhead of those multiple passes is dominated by the final computation on the finest set of time steps as sketched in the following plot:
\begin{center}
    \includegraphics[width=0.5\textwidth]{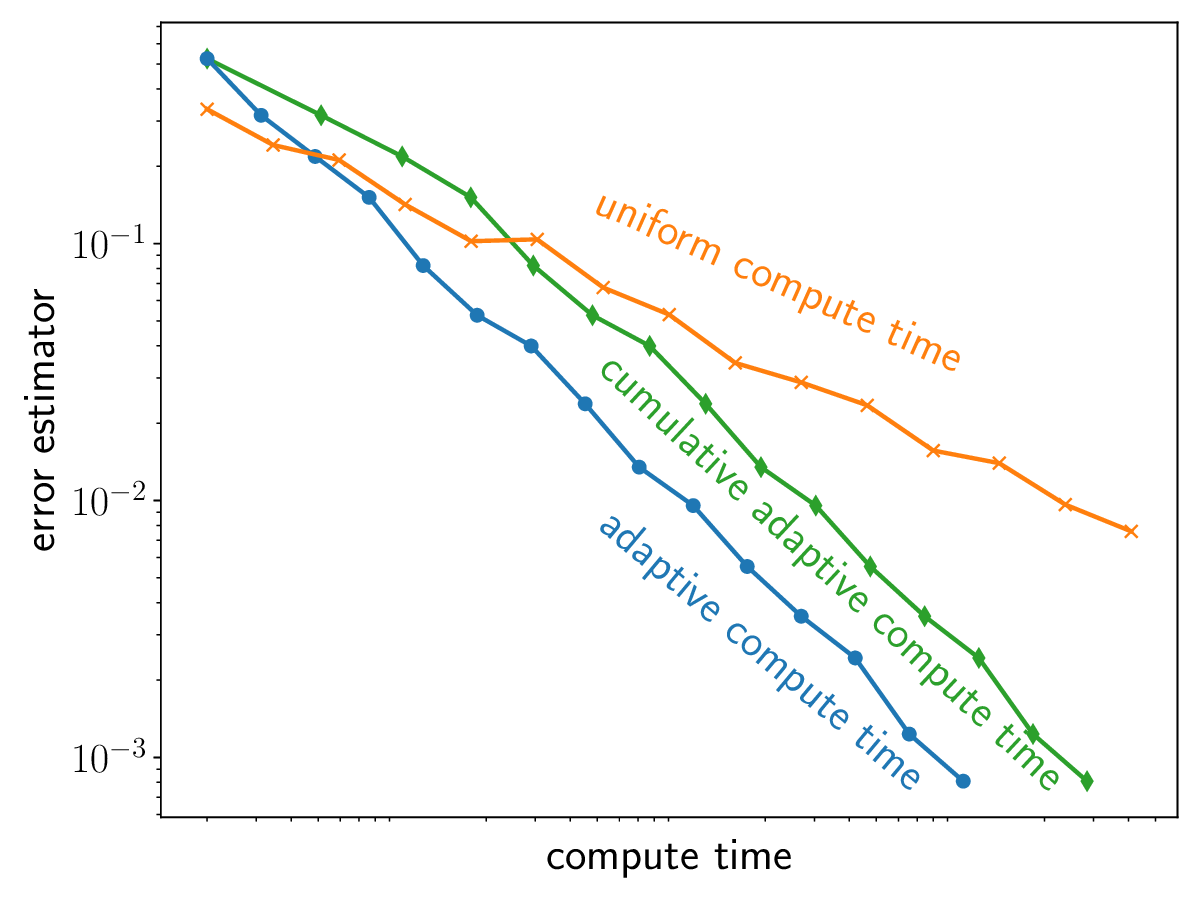}
\end{center}
We also refer to the real experiments in Figures~\ref{fig:time_ana}\&\ref{fig:time_ana_nonlin} below, where we track the compute-time for all passes through the domain and compare it with the uniform approach, which obviously requires only one pass. However, the adaptive method clearly outperforms the uniform method.
We also refer to~\cite{complexity} for a detailed discussion of provably optimal runtimes of adaptive algorithms of this kind in the stationary case, even if one accounts for an iterative (nonlinear) solver.

We note that the error estimator \fbox{Estimate} does not have to be (and in practice almost never is) a perfect proxy for the exact error between the approximate and the exact solution. This can be seen in the plots below. In the left plot, we see the exact solution of the Van-der-Pol equation ($\mu=2$) and a coarse approximation with step size $0.5$ computed with the Crank-Nicholson method. The right plot compares the exact error with the error estimator. We see that there is some correlation between error and estimator, but no perfect match.
\begin{center}
    \includegraphics[width=\textwidth]{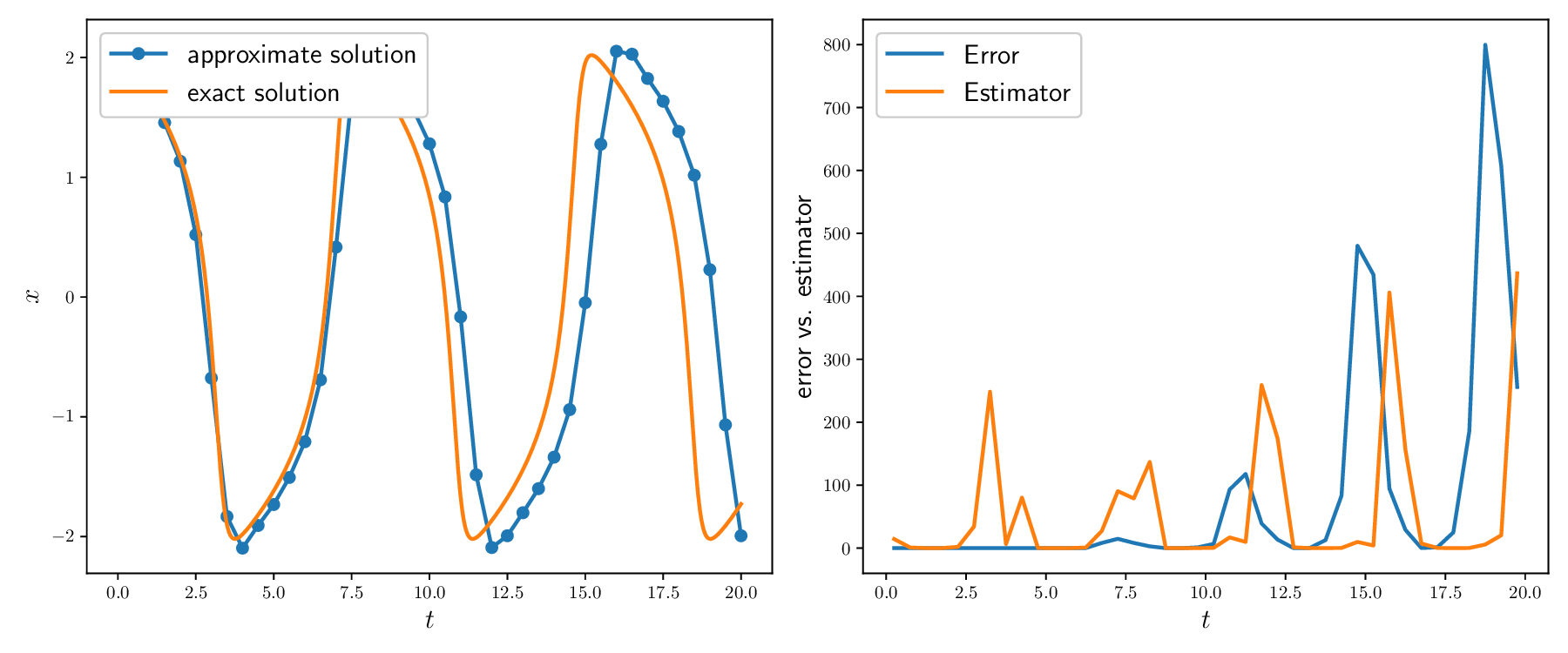} 
\end{center}
In fact, the error estimator should not be a perfect proxy for the error, as argued in Remark~\ref{rem:residual}. For the theoretical arguments below, we only require that the error estimator is an upper bound for the error and satisfies some other, more subtle properties, discussed in Section~\ref{sec:proof} below.

\subsection{Historical overview}
The classical adaptive step size methods select the step size of the next time step based on local information. A popular variant of these methods are so-called \emph{embedded} Runge-Kutta schemes (see, e.g.,~\cite[Chapter II.4]{hairer} for details). Those schemes are of the form $y_1 = y_0 + \tau(k_1b_1+\ldots +k_sb_s)$ and provide accuracy on the order $\tau^p$ if the exact solution is sufficiently smooth (here $\tau>0$ is the maximal time step size). A different linear combination of the same stages $k_1,\ldots,k_s$ gives another approximation $\widehat y_1 = y_0 + \tau(k_1\widehat b_1+\ldots +k_s\widehat b_s)$, which is then typically of order $\widehat p = p + 1$. Since the expensive part of any Runge-Kutta scheme is the evaluation of the function to obtain the stages $k_1,\ldots, k_s$, embedded schemes allow two approximations to be computed with minimal overhead. The difference of the two approximations is then used to estimate the error of the solution. This is based on the idea that
\begin{align}\label{eq:stepest}
|y(t_1)- y_1| \approx |\widehat y_1-y_1|
\end{align}
and can be used to select the next step size, e.g., one would decrease the step size if the error estimate is too large.  In adaptive mesh refinement, the assumption~\eqref{eq:stepest} is often called the \emph{Saturation Assumption}. However, unless one poses very severe regularity assumptions on the exact solution $y$, the heuristic~\eqref{eq:stepest} can not be guaranteed and is often violated even in practical applications.    This is the reason why the convergence to the exact solution and convergence rates of these adaptive methods are not well understood to this day. 

A completely different approach to adaptivity saw light in the 1980s and 1990s as I.~M.~Babuska initiated a wave of mathematical research by developing the first a~posteriori error estimators~\cite{adaptivefirst} for stationary PDEs. This opened the new field of mesh refinement in numerical analysis and led to tremendous research activity, we only mention the books~\cite{ao00,verf,repinalleinzuhaus}. D\"orfler~\cite{d1996} gave a first rigorous strategy for mesh refinement based on error estimators in 2D more than a decade later and after another five years Morin, Nochetto, and Siebert~\cite{mns} presented the first unconditional convergence proof.  We refer to~\cite{BECKER20241} for an overview on a posteriori error estimation and adaptivity.

Optimal convergence rates (see Section~\ref{sec:adaptiveAlg} below for a precise definition) of those adaptive algorithms were first shown by Binev, Dahmen, and DeVore~\cite{bdd} in 2004 for adaptive FEM of the Poisson model
problem. Building on that, Stevenson~\cite{stevenson07} and Cascon, Kreuzer, Nochetto,  and Siebert~\cite{ckns} simplified the algorithms and extended the problem class. 
These early works established what is now called the standard adaptive loop:
\begin{align}\label{eq:adaptiveloop}
 \fbox{Solve}\longrightarrow\fbox{Estimate}\longrightarrow\fbox{Refine}
\end{align}
However, all this theory crucially relies on the fact that the considered problems are symmetric with respect to the differential operators, e.g.,
\begin{align*}
\int \Delta u v\,dx = -\int \nabla u \nabla v\,dx=\int u \Delta v\,dx
\end{align*}
for any domain with zero boundary conditions. This is no longer the case in the time dependent case, where $\partial_t u$ spoils the party ($\int \partial_t u v\,dt = -\int u \partial_t v\,dt$). Thus, one can not directly transfer the results and arguments to time stepping. It took until 2019 for the first adaptive methods that are provably optimal for non-elliptic problems to appear (see, e.g.,~\cite{stokesopt} for the first optimal adaptive methods for the Stokes problem). Finally, in 2022, the work~\cite{generalqo} introduced a new proof technique to also tackle time dependent problems and builds the theoretical basis for this work. The recent work~\cite{nochettoacta} gives a great overview of the current state of the art of adaptive mesh refinement.

\subsection{Notation}
The space of continuous functions on an interval $(a,b)$ is denoted by $C^0(a,b)$.
We denote the $L^2$-norm on a domain $\Omega$ by $\norm{\cdot}{L^2(\Omega)}$. The Sobolev space $H^1_0(\Omega)$ is defined as the space of all $L^2(\Omega)$ functions with weak derivative in $L^2(\Omega)$ and zero boundary conditions. We denote the dual space of $H^1_0(\Omega)$ by $H^{-1}(\Omega)$.  The notation $\partial_t u$ denotes the time derivative of a function $u$ and $\nabla_y F(t,y)$ denotes the gradient of $F$ with respect to its second argument.  The notation $\#\TT$ denotes the number of elements in a set $\TT$.
\section{Implicit Discretization}
\label{sec:impl_adaptivity}
We consider the initial-value problem \eqref{eq:ode}, and assume that $F$ is Lipschitz-continuous, i.e.
\begin{align}
\label{eq:Lip}
    |F(t,y_1)-F(t,y_2)| \leq L_1 |y_1-y_2| \quad \text{for all } y_1,y_2 \in \R^d,\, t \in [t_0,\tend].
\end{align}
Furthermore, we suppose that $F$ is continuously differentiable with Lipschitz-continuous Jacobian, i.e.
\begin{align}
\label{eq:JacLip}
    |\nabla F(t_1,y_1)-\nabla F(t_2,y_2)| \leq L_2\Big(|t_1-t_2|+|y_1-y_2| \Big) \quad \text{for all } y_1,y_2 \in \R^d,\, t_1, t_2 \in [t_0,\tend].
\end{align}
%Lastly we suppose that $F$ is piecewise affine in $t$, i.e. there exists a discretization $t_0< t_1 <\ldots < t_n=\tend$ denoted by $\TT_0= \{[t_0,t_1], \ldots, [t_{n-1}, t_n] \}$ of $[t_0,\tend]$, $n=\#\TT_0$, such that
%\begin{align}
%\label{eq:affine}
%    F(t,y)=f_i(y)+\frac{f_{i+1}(y)-f_{i}(y)}{t_{i+1}-t_i}(t-t_i), \quad t \in [t_i, t_{i+1}], i \in {0, \ldots n-1},
%\end{align}
%and functions $f_0, \ldots, f_n$ with the required properties.
We suppose there is an initial discretization set of time steps $t_0< t_1 <\ldots < t_n=\tend$ which induces a time-mesh (a set of time intervals) $\TT_0= \{[t_0,t_1], \ldots, [t_{n-1}, t_n] \}$ of $[t_0,\tend]$, $n=\#\TT_0$. In the following, we discretize the ODE~\eqref{eq:ode} on a mesh $\TT$ which is a refinement of $\TT_0$, i.e., it has additional time steps. We will use the spline spaces
\begin{align*}
    \PP^p(\TT)&:=\set{v\in L^2(t_0,\tend)}{v|_{T}\text{ is a polynomial of degree }\leq p,\, \text{for all }T\in\TT},\\
    \SS^p(\TT)&:=\PP^p(\TT)\cap C^0(t_0,\tend).
\end{align*}
The space $\PP^p(\TT)$ contains piecewise polynomials of degree $p$ with potential jumps over the time-steps, while functions in $\SS^p(\TT)$ are continuous over the whole time interval. 
In the following, we consider continuous Petrov-Galerkin finite-element-in-time methods (see, e.g.,~\cite[Chapter~70]{ErnGuermond}, for an overview) of the following form: Find $y_\TT \in S^p(\TT)$ such that $y_\TT(t_0) = y_0$ and
\begin{align}
\label{eq:gen_disc}
\begin{split}
    \int_{t_0}^{\tend}\Big(\partial_t y_\TT(t)-F(t,y_\TT(t)) \Big)\cdot v(t) dt &=0 \quad\text{for all } v\in \PP^{p-1}(\TT).
\end{split}
\end{align}
This method is indeed a time stepping method as one can test~\eqref{eq:gen_disc} with function $v\in \PP^{p-1}(\TT)$ that are supported only on one time interval $[t_i,t_{i+1}]$. Then, $y_\TT|_{[t_i,t_{i+1}]}$ can be computed using only the data $y_\TT(t_i)$.
We note that for linear ODEs, the case $p=1$ corresponds to the Crank-Nicolson method, while the case $p=2$ corresponds to the three stage Lobatto 3A method.

In the following, we want to estimate the approximation error and use that information to adjust the time step size iteratively.
To that end, we use a residual-based element-wise error estimator similar to \cite{generalqo} that measures the error on each time interval separately, i.e.
\begin{align}
\label{eq:estimator}
\begin{split}
    \eta_{\TT}^2&= \sum_{T \in \TT} \eta_\TT(T)^2,\\
    \eta_\TT(T)&=|T|\norm{\partial_t F(\cdot,y_{{\TT}})+\nabla_yF(\cdot, y_{{\TT}}) \partial_t y_{{\TT}}-\partial^2_t y_\TT}{L^2(T)}.
\end{split}
\end{align}
We will show in Section~\ref{sec:proof} below that this estimator is reliable in the sense that it is an upper bound for the error. Moreover, for many problems, this error estimator is easily computable without much overhead compared to the actual computation of $y_\TT$.
 
\begin{remark}\label{rem:residual}
    We note that, different from diffusion-dominated stationary problems, the optimal error estimator is not the error itself.
    The fact that one would, in theory, like to steer the adaptive algorithm with the actual error has guided the development of many error estimators like $h-h/2$, two-level, or enrichment based error estimators in finite elements. However, for time dependent problems, this cannot work and some form of residual type error estimation seems to be necessary. To illustrate this idea, assume that the numerical approximation $y_\TT$ satisfies~\eqref{eq:diffeq} with $F(t,y)=y$ exactly except for one element $T_0$ with appears early in the time interval. A residual type error estimator would correctly identify the element $T_0$ as the one which needs to be refined since $\partial_t y_\TT-F(y_\TT)\neq 0$ on $T_0$. However, the approximation error is amplified exponentially, and a corresponding error estimator would always mark elements at the very end of the time interval for refinement, never identifying the root cause of the problem.
\end{remark}

\subsection{Example: Semi-discretized heat-equation}\label{sec:heatexample}
The linear heat equation
\begin{align*}
    \partial_t u - \Delta u &= f\quad\text{in }[t_0,\tend]\times \Omega,\\
    u&=0\quad\text{on }[t_0,\tend]\times \partial\Omega,\\
    u(0)&=u_0\quad \text{on }\Omega
\end{align*}
is usually discretized in space via some finite element method (FEM), i.e.,
\begin{align*}
    u(t) \approx  \sum_{i=1}^d \phi_i y_\TT(t)_i,
\end{align*}
where $\{\phi_1,\ldots,\phi_d\}$ are a basis of the underlying finite element space $\XX_h\subseteq H^1_0(\Omega)$.
This results in a problem of the form~\eqref{eq:ode} with
\begin{align*}
    F(t,y):= M^{-1}(Ay + f),
\end{align*}
where $A,M\in \R^{d\times d}$ denote the stiffness and the mass matrix of the FEM. Since the error estimator~\eqref{eq:estimator} effectively measures the residual 
of the problem at hand, it is useful to think about the natural norm for it. While all norms are equivalent on $\R^d$, the correct norm will result in consistent results for different space mesh sizes $h>0$ (which correspond to different $d\in\N$). For the heat equation, we have $\partial_t u-\Delta u-f \in L^2([t_0,\tend],H^{-1}(\Omega))$, where $H^{-1}(\Omega)$ denotes the dual space of $H^1_0(\Omega)$. Thus, for the discretization, it makes sense to equip $\R^d$
with the discrete analog of the $H^{-1}(\Omega)$-norm, which is given by $|y|_{MA^{-1}M}^2:= A^{-1}My\cdot My$ for all $y\in \R^d$.
With this norm, the elementwise contribution of the error estimator~\eqref{eq:estimator} read
\begin{align*}
    \eta_\TT(T)^2=|T|^2\int_T (A^{-1}M\partial_t^2 y_{{\TT}} -\partial_t y_{{\TT}}-A^{-1}\partial_t f)\cdot(M\partial_t^2 y_{{\TT}} -A\partial_t y_{{\TT}}-\partial_t f)\,dt.
\end{align*}
If one wants to avoid inverting $A$ to compute the error estimator, one can use the scaling behavior of $\norm{A^{-1}M}{2}\lesssim 1$ and ${\rm cond}(M)\simeq 1$ on finite element spaces $\XX_h$ based on uniform space meshes and use the norm $|y|_{MM}^2:=My\cdot My$ for $y\in \R^d$. This results in the error estimator
\begin{align*}
     \eta_\TT(T)^2=h^{2-d}|T|^2\int_T |M\partial_t^2 y_{{\TT}} -A\partial_t y_{{\TT}}- \partial_t f|^2\,dt.
\end{align*}

\subsection{Practical implementation}\label{sec:quad}
For nonlinear $F(\cdot)$, we usually have to approximate the integral in~\eqref{eq:gen_disc} by a quadrature rule. The simplest variant is the trapezoidal rule, which approximates the integral by evaluating the integrand at the endpoints of the time interval, i.e., for $p=1$ and $T=[t_0,t_1]\in\TT$, we have
\begin{align*}
    \int_T\Big(\partial_t y_\TT(t)-F(t,y_\TT(t)) \Big)\cdot v(t) dt\approx \frac{|T|}{2}\sum_{i=0}^1\Big(\partial_t y_\TT(t_i)-F(t_i,y_\TT(t_i)) \Big)\cdot v(t_i).
\end{align*}
A more accurate approximation is given by the Simpson rule, i.e., for $p=2$
\begin{align*}
    \int_T\Big(&\partial_t y_\TT(t)-F(t,y_\TT(t)) \Big)\cdot v(t) dt\\
    &\approx \frac{|T|}{6}\sum_{i=0}^1\Big(\partial_t y_\TT(t_i)-F(t_i,y_\TT(t_i)) \Big)\cdot v(t_i)
    +\frac{2|T|}{3}\Big(\partial_t y_\TT(t_{1/2})-F(t_{1/2},y_\TT(t_{1/2})) \Big)\cdot v(t_{1/2}),
\end{align*}
where $t_{1/2}=(t_0+t_1)/2$ is the midpoint of the time interval $T$. As mentioned above, these correspond to the Crank-Nicolson and the three stage Lobatto 3A methods. 

By choosing different quadrature rules, we can recover many Runge-Kutta methods. One useful example is Radau quadrature, which recovers the well-known Radau IIA 
methods~\cite{hairer2,hairer1999}, see Section~\ref{sec:VDP} below for an example.

It is important, that we choose the polynomial degree $p$ corresponding to the quadrature rule, in order to keep the equation~\eqref{eq:gen_disc} well-defined.
For example, a quadrature rule with $m\in\N$ quadrature points per element $T$ allows us to choose $p=m-1$ (otherwise we could construct non-zero $y_\TT$ that vanishes on all quadrature points).
Thus, we expect the optimal convergence rate in the $H^1$-norm to be of order $m-1$.
Thus, we expect the Crank-Nicolson method to show order one, the three stage Lobatto 3A method to show order two, and the typical Radau IIA methods Radau 5, Radau 9, and Radau 13 to show the orders three, five, and seven respectively (note that Radau quadrature with $m$ points is of order $2m-2$ and thus the global error of order $2m-3$).

In this case we may test with functions $v\in \PP^{m-2}(T)$. As the ansatz function $y_\TT|_T\in\PP^{p}(T)$ has $p-1$ degrees of freedom (one is fixed from the previous time step), we can choose $v$ such that it vanishes at all but one of the $m-1$ remaining quadrature points.

\subsection{The adaptive Algorithm}\label{sec:adaptiveAlg}
We note that for uniform time steps, convergence of the method above is well-known and can be of arbitrarily high order as long as the exact solution $y$ is sufficiently smooth, e.g., one can expect convergence of order $p$ if $y\in C^{p+1}([t_0,\tend])$. However, we are interested in adaptive choice of time steps and still prove convergence rates, ideally without many assumptions on the regularity of the exact solution.
We want to apply adaptive mesh refinement as introduced and analyzed in \cite{Carstensen_2014} and \cite{generalqo}. This is realized by the following algorithm.
\begin{algorithm}[H]
    \caption{Adaptive Algorithm}
    \label{alg:adaptive}
    \begin{algorithmic}[1]
       \item[] \textbf{Input:} Initial time mesh $\TT_0=\{[t_0,t_1],\ldots,[t_{N-1},\tend]\}$, parameter $0<\theta<1$. \\
        For $\ell=0,1,2,\ldots$ do:\\
        \STATE Compute $y_{\TT_\ell}$ from \eqref{eq:gen_disc}.\\
        \STATE Compute error estimates $\eta_{\TT_\ell}(T)$ for all $T \in \TT_\ell$.\\
        \STATE Find a set $\mathcal{M}_\ell$ of minimal cardinality such that
        \begin{align*}
            \sum_{T \in \mathcal{M}_\ell} \eta_{\TT_\ell}(T)^2 \geq \theta \sum_{T \in \TT_\ell} \eta_{\TT_\ell}(T)^2.
        \end{align*}\\
        \STATE Bisect all $T \in \mathcal{M}_\ell$ to obtain a new mesh $\TT_{\ell+1}$. \label{step:bisect}
    \end{algorithmic}
\end{algorithm}
As discussed in the introduction, we are interested in the best possible error vs. work rate. Mathematically, this can be stated as
\begin{align*}
    \eta_\TT\leq C (\#\TT)^{-s}
\end{align*}
for the largest possible rate $s>0$. We will show below, that the error estimator is an upper bound for the error. This means that any convergence rate for the error estimator immediately implies the same rate for the error, i.e.,
\begin{align*}
\norm{y-y_\TT}{H^1([t_0,\tend])}\leq C (\#\TT)^{-s}.
\end{align*}

To characterize the best possible rate $s>0$, we need to specify the set $\T$ of all possible time discretizations $\TT$, that can be reached from the initial discretization $\TT_0$ by bisection as stated in Step~\ref{step:bisect} of Algorithm~\ref{alg:adaptive}.
These are all discretizations $\TT$ with time steps $t_\TT$ that can constructed by bisecting previous time intervals $[t_i,t_{i+1}]$.
Given the initial discretization $\TT_0=\{[t_0,t_1],\ldots,[t_{n-1},t_n]\}$, all possible time steps $t_\TT$ are of the form
\begin{align*}
    t_\TT= t_i + k2^{-m} (t_{i+1}-t_i)
\end{align*}
for some $i=0,\ldots,n-1$, $m\in\N$ and $0\leq k\leq 2^m$.

If in this set $\T$ of possible discretizations, there exists a sequence $\TT_\ell^{\rm best}$, $\ell\in\N$ with $\#\TT_\ell^{\rm best}\to \infty$ and
\begin{align*}
    \eta_{\TT_\ell^{\rm best}}\leq C_{\rm best} (\#\TT_\ell^{\rm best})^{-s}\quad\text{for all }\ell\in\N,
\end{align*}
we say that the convergence rate $s>0$ is possible. This statement is equivalent to requiring that $C_{\rm best}$ is finite, i.e.,
\begin{subequations}
\label{eq:rate_optim}
    \begin{align}
        C_{\rm best} := \sup_{N \in \N } \inf_{\substack{\TT \in \mathbb{T}\\ \#\TT \leq N}} N^s \eta_{\TT} < \infty.
    \end{align}
We say that the adaptive algorithm is optimal for any possible rate $s>0$, if it computes approximations $y_{\TT_\ell}$ on time meshes $\TT_\ell$, $\ell\in\N$, with
\begin{align}
    \eta_{\TT_\ell}\leq C_{\rm best}C_{\rm opt} (\#\TT_\ell)^{-s}\quad\text{for all }\ell\in\N
\end{align}
\end{subequations}
for some constant $C_{\rm opt}<\infty$ that does not depend on $\ell$. This behavior is also sketched in the following plot
\begin{center}
    \includegraphics[width=0.5\textwidth]{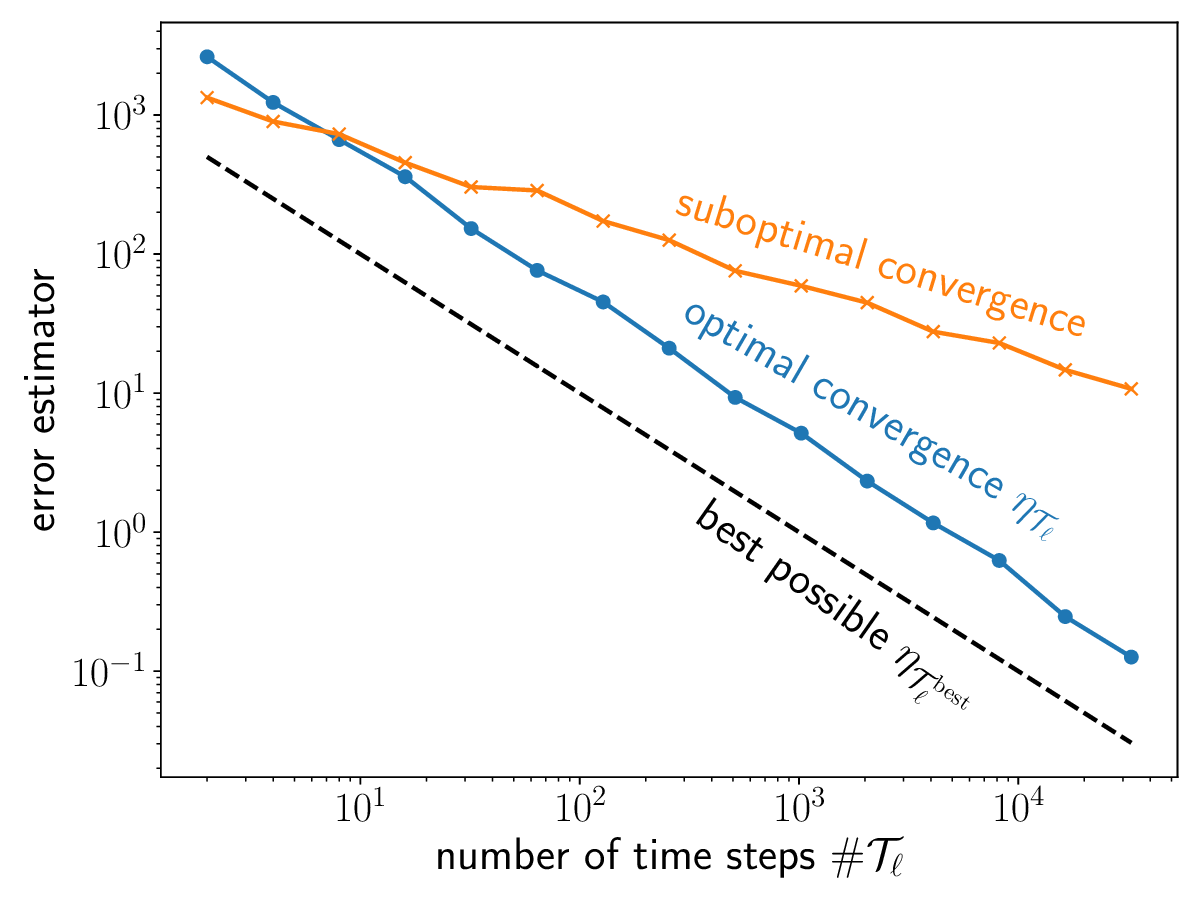}
\end{center}

With this notion of optimality, we will prove the following theorem.
\begin{theorem}
\label{thm:optim}
If the right-hand side $F$ from \eqref{eq:ode} satisfies \eqref{eq:Lip}, the error estimator~\eqref{eq:estimator} is reliable in the sense
\begin{align*}
    \norm{y-y_{\TT}}{H^1([t_0,\tend])}\leq C_{\rm rel}\eta_{\TT}\quad\text{for all admissible meshes }\TT\in\T,
\end{align*}
where $C_{\rm rel}$ is independent of $\TT$. If $F$ satisfies additionally~\eqref{eq:JacLip} and if $\TT_0$ is sufficiently fine, then there exists $0<\theta_\star<1$ such that Algorithm \ref{alg:adaptive} satisfies rate-optimality~\eqref{eq:rate_optim} for all $0<\theta<\theta_\star$. Moreover, the error and the error estimator converge with any rate $s>0$ such that $C_{\rm best}$ from~\eqref{eq:rate_optim} is finite, i.e.,
\begin{align}\label{eq:optimal}
    \norm{y-y_{\TT_\ell}}{H^1([t_0,\tend])}\leq C_{\rm rel}\eta_{\TT_\ell}\leq C_{\rm best}C_{\rm opt} C_{\rm rel} (\#\TT_\ell)^{-s}\quad\text{for all }\ell\in\N.
\end{align}
Finally, if we assume that $y_{\TT_\ell}$ can be computed in $C_{\rm time}\#\TT_\ell$ time for all $\ell\in\N$ (i.e., constant cost per time step), there also holds
\begin{align}\label{eq:runtime}
 \norm{y-y_{\TT_\ell}}{H^1([t_0,\tend])}\leq C_{\rm rel}\eta_{\TT_\ell}\leq  C_{\rm rel}C_{\rm time}^s\frac{C_{\rm best}C_{\rm opt}C_{\rm lin}}{(1-q^{1/s})^s} (\text{total runtime of Algorithm~\ref{alg:adaptive} up to step }\ell)^{-s}.
\end{align}
Note that the total runtime ignores bookkeeping and marking, which is usually orders of magnitude faster (see also~\cite{carlmarks} for a time-optimal marking algorithm). 
\end{theorem}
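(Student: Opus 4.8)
The plan is to deduce the result from the abstract rate-optimality framework of~\cite{Carstensen_2014}, whose hypotheses I would verify for the pair $(y_\TT,\eta_\TT)$ along the mesh sequence produced by Algorithm~\ref{alg:adaptive}, using the quasi-orthogonality technique of~\cite{generalqo} to circumvent the non-symmetry of the time derivative. Concretely I would establish five ingredients: reliability (which is itself part of the assertion), stability of $\eta_\TT$ on non-refined elements, estimator reduction on bisected elements, discrete reliability, and a general (summed) quasi-orthogonality. Once these are in place, \cite{Carstensen_2014} yields linear convergence $\eta_{\TT_{\ell+1}}\le C_{\rm lin}\,q^{\ell}\eta_{\TT_0}$ for some $q<1$ together with rate-optimality~\eqref{eq:rate_optim} for all $\theta$ below a threshold $\theta_\star$ determined by the involved constants, and the remaining displays~\eqref{eq:optimal} and~\eqref{eq:runtime} follow by elementary post-processing.

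For reliability I would first note that, by the chain rule, the integrand in~\eqref{eq:estimator} equals $-\partial_t R_\TT$ with the residual $R_\TT:=\partial_t y_\TT-F(\cdot,y_\TT)$, so that $\eta_\TT(T)=|T|\,\norm{\partial_t R_\TT}{L^2(T)}$. The Petrov--Galerkin identity~\eqref{eq:gen_disc} says that $R_\TT$ is $L^2$-orthogonal to $\PP^{p-1}(\TT)$; in particular its best $L^2(T)$-approximation in $\PP^{p-1}(T)$ vanishes on each $T\in\TT$, and a Poincaré-type estimate gives $\norm{R_\TT}{L^2(T)}\lesssim|T|\,\norm{\partial_t R_\TT}{L^2(T)}=\eta_\TT(T)$, hence $\norm{R_\TT}{L^2(t_0,\tend)}\lesssim\eta_\TT$. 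Since $e:=y-y_\TT$ satisfies $e(t_0)=0$ and $\partial_t e=\big(F(\cdot,y)-F(\cdot,y_\TT)\big)-R_\TT$, the Lipschitz bound~\eqref{eq:Lip} and Grönwall's inequality give $\norm{e}{L^\infty(t_0,\tend)}\lesssim\norm{R_\TT}{L^2}$ and then $\norm{\partial_t e}{L^2}\le L_1\norm{e}{L^2}+\norm{R_\TT}{L^2}\lesssim\norm{R_\TT}{L^2}$, i.e.\ $\norm{y-y_\TT}{H^1}\le C_{\rm rel}\eta_\TT$ with $C_{\rm rel}$ depending only on $L_1$, $p$ and $\tend-t_0$. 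The very same argument applied to $\delta:=y_{\TT'}-y_\TT\in\SS^p(\TT')$ for a refinement $\TT'$ of $\TT$ yields discrete reliability: $\delta$ solves a problem whose source is $-R_\TT$ tested against $\PP^{p-1}(\TT')$, and because $R_\TT\perp\PP^{p-1}(\TT)$ this source only ``sees'' the bisected elements, whence $\norm{y_{\TT'}-y_\TT}{H^1}\lesssim\eta_\TT(\TT\setminus\TT')$. Stability and reduction are local: expanding $\eta_{\TT'}(T)-\eta_\TT(T)$ and using~\eqref{eq:Lip},~\eqref{eq:JacLip} on $F$ and $\nabla F$ together with an inverse estimate to control $\partial_t^2 y_\TT$, one gets $|\eta_{\TT'}(T)-\eta_\TT(T)|\lesssim\norm{y_{\TT'}-y_\TT}{H^1}$ for $T\in\TT\cap\TT'$, while on a bisected element the prefactor $|T|$ halves, which delivers a contraction factor $<1$ up to the term $\norm{y_{\TT'}-y_\TT}{H^1}$.

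The main obstacle is the general quasi-orthogonality, since the form $(w,v)\mapsto\int(\partial_t w-\ldots)\,v\,dt$ behind~\eqref{eq:gen_disc} is non-symmetric and no Pythagoras identity is at hand — precisely the phenomenon flagged in the introduction. Here I would invoke~\cite{generalqo}, which reduces quasi-orthogonality to a \emph{uniform discrete inf-sup stability} of the linearized Petrov--Galerkin problems over all admissible meshes $\TT\in\T$. For the principal part this is immediate, since $\partial_t$ maps $\{w\in\SS^p(\TT):w(t_0)=0\}$ onto $\PP^{p-1}(\TT)$ isomorphically with constants depending only on $\tend-t_0$ (test with $v=\partial_t w$ and apply Poincaré in time); the zeroth-order perturbation from the Jacobian $\nabla_yF$ is absorbed, uniformly in $\TT$, provided $\TT_0$ is fine enough that the nonlinear discrete problems~\eqref{eq:gen_disc} are well posed and their solutions $y_\TT$ remain in a fixed neighbourhood of $y$ where~\eqref{eq:JacLip} is effective — this is where the hypothesis that $\TT_0$ be sufficiently fine enters. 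With inf-sup stability secured,~\cite{generalqo} supplies the quasi-orthogonality, and~\cite{Carstensen_2014} then gives linear convergence and~\eqref{eq:rate_optim}.

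Finally,~\eqref{eq:optimal} follows by chaining reliability with rate-optimality. For the runtime bound~\eqref{eq:runtime} I would combine rate-optimality, $\#\TT_j\le(C_{\rm best}C_{\rm opt})^{1/s}\,\eta_{\TT_j}^{-1/s}$, with linear convergence in the form $\eta_{\TT_j}\ge C_{\rm lin}^{-1}q^{-(\ell-j)}\eta_{\TT_\ell}$, obtaining $\#\TT_j\le(C_{\rm best}C_{\rm opt}C_{\rm lin})^{1/s}q^{(\ell-j)/s}\eta_{\TT_\ell}^{-1/s}$; summing the assumed cost $C_{\rm time}\#\TT_j$ over $j=0,\dots,\ell$ gives a geometric series with sum bounded by $(1-q^{1/s})^{-1}$ times its largest term, and solving the resulting inequality for $\eta_{\TT_\ell}$ (and using reliability once more) produces~\eqref{eq:runtime}. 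I expect the quasi-orthogonality and the uniform inf-sup verification to be by far the delicate point; everything else is standard once reliability and the local estimator properties are in hand.
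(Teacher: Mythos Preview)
Your overall strategy matches the paper's: verify the axioms (A1)--(A4) of~\cite{Carstensen_2014}, obtain quasi-orthogonality via~\cite{generalqo}, and then post-process for linear convergence, rate-optimality, and the runtime bound. Your treatments of reliability, stability, reduction, and the runtime estimate are essentially those of the paper.

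There is, however, a genuine gap in your quasi-orthogonality argument. The machinery of~\cite{generalqo} yields quasi-orthogonality for \emph{linear} Petrov--Galerkin schemes from uniform inf-sup stability; it does not apply directly to the nonlinear solutions $y_{\TT_\ell}$ of~\eqref{eq:gen_disc}. The paper therefore introduces an auxiliary \emph{linearized} discretization around the exact solution $y$, with discrete solutions $\widetilde y_\TT$ (see~\eqref{eq:ode_lin_gen}--\eqref{eq:lin_disc_gen}), and proves two separate facts: (i) uniform inf-sup stability of the linearized bilinear form (Lemma~\ref{lem:infsup_gen}), which gives quasi-orthogonality for the $\widetilde y_{\TT_\ell}$; and (ii) the \emph{quadratic} closeness estimate $\norm{\widetilde y_\TT - y_\TT}{H^1}\lesssim\norm{y-y_\TT}{H^1}^2$ (Lemma~\ref{lem:lin_min_nonlin_gen}). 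Only the combination allows the transfer to the nonlinear sequence in Lemma~\ref{lem:quasi_ortho}: the quadratic terms become $\norm{y-y_k}{H^1}^4\lesssim \eps\,\eta_{\TT_k}^2$ via reliability and sufficient fineness of $\TT_0$, and are then absorbed on the left. Your sketch jumps from ``inf-sup of the linearized problems'' to ``quasi-orthogonality of $y_{\TT_\ell}$'' without this transfer step; as written, it would only prove the result for linear $F$.

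A smaller point: discrete reliability is more delicate than ``the very same Gr\"onwall argument''. Because both $y_\TT$ and $y_{\widehat\TT}$ are discrete and $F$ is nonlinear, the paper proceeds by an element-by-element induction along $\widehat\TT$ (Lemma~\ref{lem:gen_reldisc}), representing $y_\TT-y_{\widehat\TT}$ on each interval via its derivative at Gauss--Legendre nodes and exploiting $R_\TT\perp\PP^{p-1}(T)$ precisely on non-refined $T$ to drop those contributions; this also produces the ``$\TT_0$ sufficiently fine'' condition $1-C|T|>0$. Your one-line localization captures the intuition but not the mechanism.
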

For a more detailed discussion of optimal runtimes~\eqref{eq:runtime}, including iterative solver time for nonlinear stationary problems, we refer to~\cite{complexity,dirknonlin}.   

\subsection{Estimator modification based on confidence}\label{sec:modification}
Particularly for challenging stiff problems, we observe that the vanilla form of the estimator results in many unnecessary time steps as \emph{spurious} near singularities appear in the pre-asymptotic regime of the computation. This is particularly pronounced for long time intervals. To remedy this problem, we found that a simple modification of the estimator based on the confidence we have on estimated error can significantly improve the performance. We achieve this by defining the locally equivalent estimator
\begin{align*}
\widetilde \eta_\TT(T_i)^2:= \frac{\eta_\TT(T_i)^2}{1+\sum_{j=0}^i\eta_\TT(T_j)^2}\quad\text{for all }T_i\in\TT,
\end{align*}
where $T_1,T_2,\ldots$ are the ordered time intervals in $\TT$. The idea is that large estimator contributions early in the time interval decrease confidence in the later contributions. Thus, Algorithm~\ref{alg:adaptive} will focus on the earlier intervals with large estimator. We immediately see local equivalence  if we have an upper bound $\eta_\TT^2\leq C$ for all $\TT$, i.e.,
\begin{align*}
\widetilde \eta_\TT(T_i)^2\leq \eta_\TT(T_i)^2\leq (1+C)\widetilde \eta_\TT(T_i)^2
\end{align*}
Usually, the bound $C$ follows from the stability properties of the given problem. We note that we use $\widetilde \eta_\TT$ only for the marking step in Algorithm~\ref{alg:adaptive}. Local equivalence implies that all the theoretical results from Theorem~\ref{thm:optim} still hold, i.e., the adaptive algorithm is still optimal in the sense of~\eqref{eq:rate_optim} and~\eqref{eq:optimal} with respect to the original estimator $\eta_\TT$. For details, we refer to~\cite[Section~8]{Carstensen_2014}.

\section{Applications of adaptive time stepping} 
Before we discuss the proof of Theorem~\ref{thm:optim}, we want to showcase the usefulness of adaptive time stepping on a couple of classical problems.
While older adaptive time stepping methods as discussed in the introduction may show similar practical performance, we emphasize that the present methods are the first for which one can prove optimal convergence rates in the sense of Theorem~\ref{thm:optim}. Thus, the performance is mathematically guaranteed.
In the examples below, we compare uniform time steps $t_i= t_0+ \tau i$ for some step size $\tau:=1/\#\TT>0$ with the adaptive step size selection from Algorithm~\ref{alg:adaptive}. Since the exact solution is usually not available, the errors are computed with respect to the approximation on the finest set of time steps. As an error measure, we plot the $H^1([t_0,\tend])$-norm, which is the natural norm in which our error estimates are given. 
Note that this norm is an upper bound for the pointwise error $\max_{t\in [t_0,\tend]}|y(t)-y_\TT(t)|$.

\subsection{Linear Heat Equation}
We consider the heat equation with homogeneous Dirichlet boundary conditions as discussed in Section~\ref{sec:heatexample} on $\Omega:=[0,1]^2$ and for $f=0$.
We set $u_0=1$ and project it onto $\XX_h$ via the $L^2(\Omega)$ projection. Furthermore, we use the error estimator derived in Section~\ref{sec:heatexample}. Below, we present the results of several experiments with different order discretizations.
In all the experiments, we see a clear advantage of the adaptive algorithm over the uniform algorithm. This is due to the startup singularity of the exact solution
at $t=0$ induced by non-matching boundary conditions $u_0|_{\partial\Omega}\neq 0$.
We also include plots that compare the absolute compute time for adaptive and uniform approach, and observe that the adaptive algorithm is superior even for coarse approximations. The difference between uniform and adaptive approach is more pronounced for higher order discretizations. Similar experiments for $p=1$ can be found in~\cite{generalqo}.
\begin{figure}[h]
    \centering
    
    \begin{subfigure}{0.45 \textwidth}
        \centering
        \includegraphics[width=\textwidth, trim=25 0 30 30, clip]{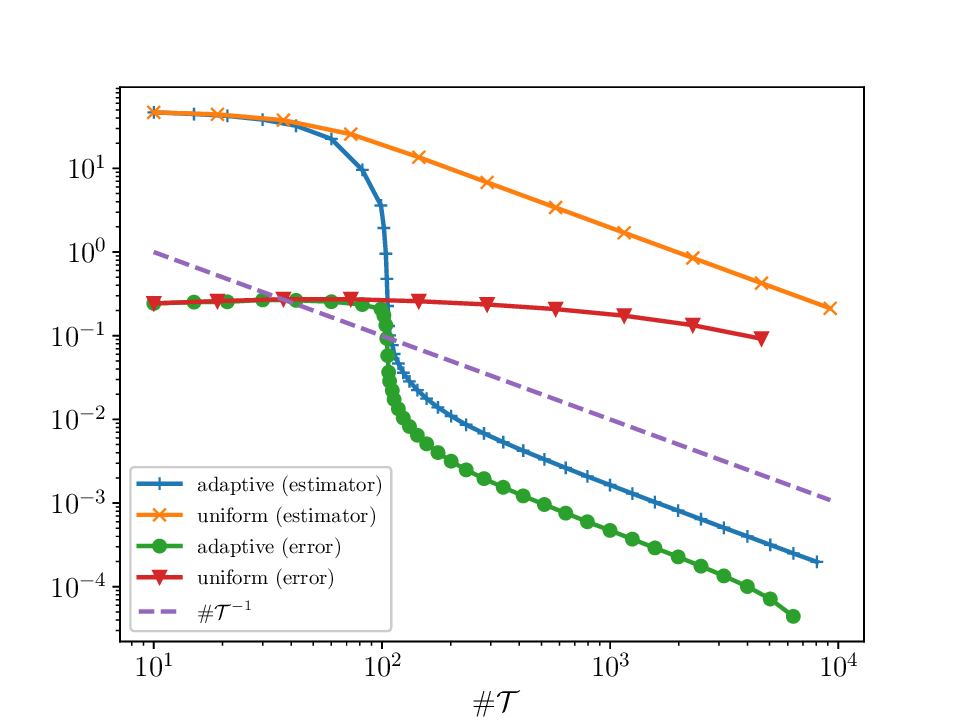}
        \caption{CN discretization, $p=1$}
    \end{subfigure}
    \begin{subfigure}{0.45 \textwidth}
        \centering
         \includegraphics[width=\textwidth, trim=25 0 30 30, clip]{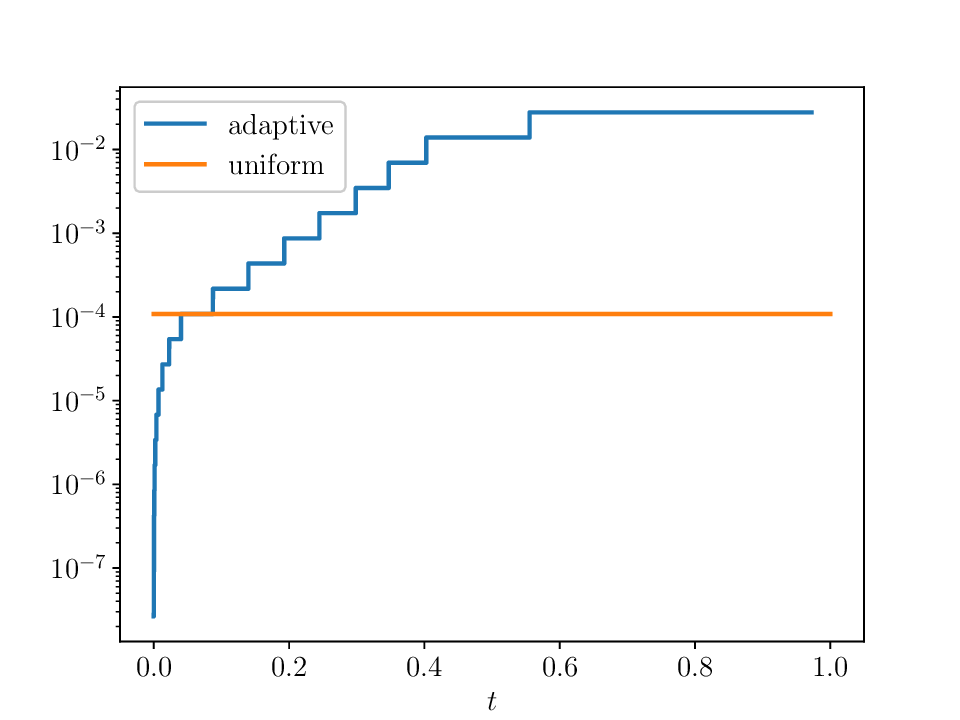}
          \caption{CN discretization, $p=1$}
    \end{subfigure}\\
     \begin{subfigure}{0.45 \textwidth}
        \centering
         \includegraphics[width=\textwidth, trim=25 0 30 30, clip]{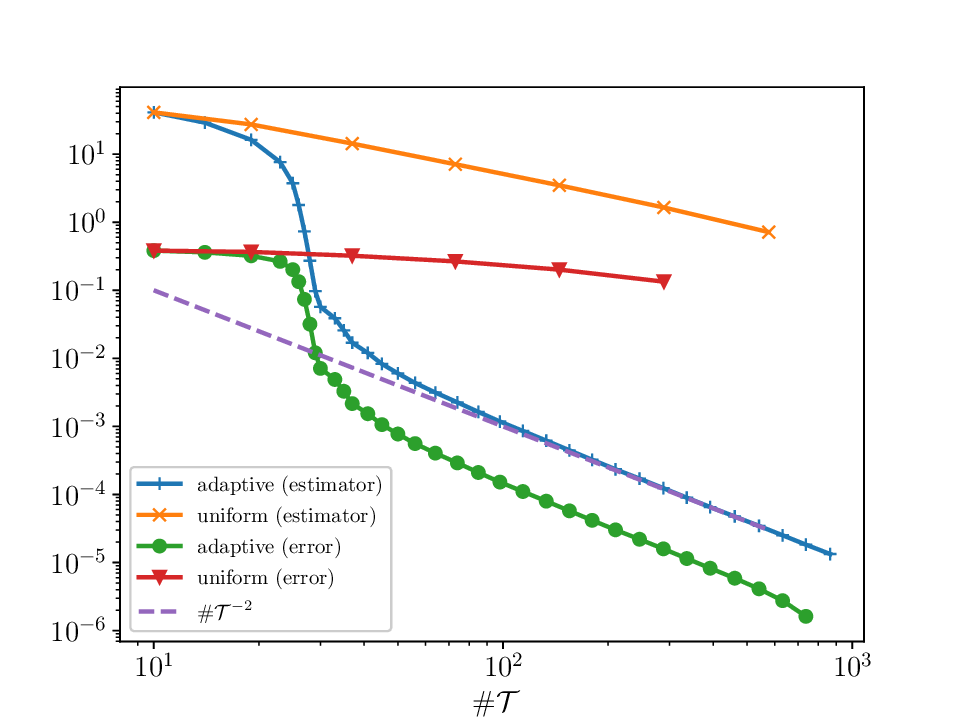}
         \caption{Lobatto discretization, $p=2$}
    \end{subfigure}
    \begin{subfigure}{0.45 \textwidth}
        \centering
         \includegraphics[width=\textwidth, trim=25 0 30 30, clip]{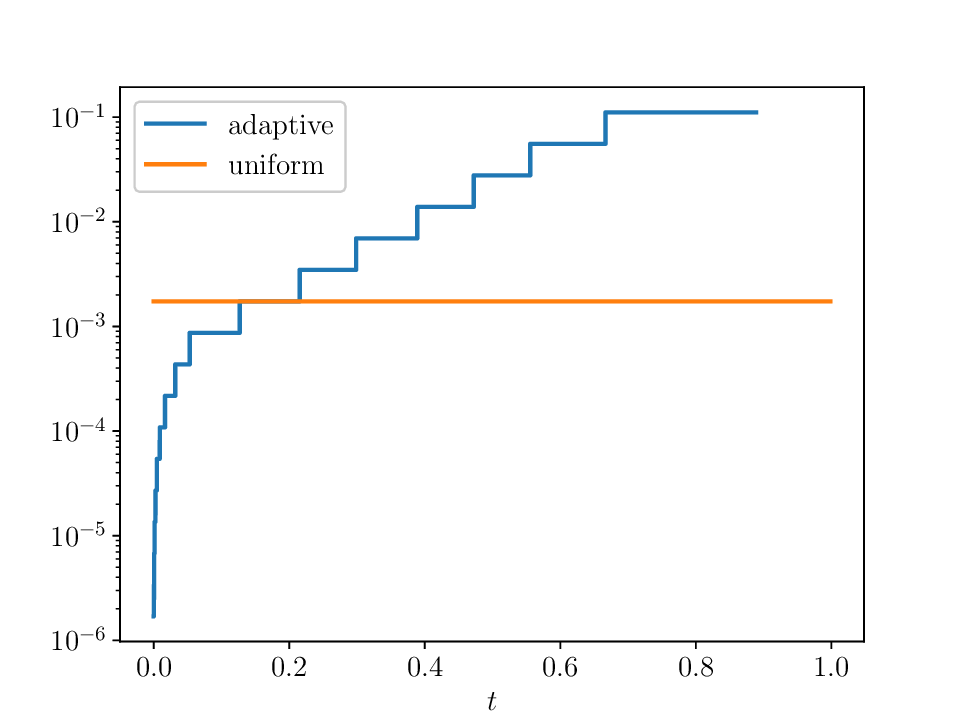}
         \caption{Lobatto discretization, $p=2$}
    \end{subfigure}
    \caption{(Linear heat equation) Left: We see the $H^1_0$-error $\norm{\partial_t(u-u_\TT)}{L^2((t_0,\tend))}$ and the estimator $\eta_\TT$ over number of time-intervals $\#\TT$. The space discretization uses a mesh-size of $\approx 1/50$ for $p=1$ and of $\approx 1/20$ for $p=2$. Right: We see the adaptive step sizes compared with the uniform step size of the finest computations. The parameter for the adaptive algorithm is $\theta=0.5$.  }
    
    \label{fig:heat_lin}
\end{figure}

\begin{figure}[h]

    \centering
    \begin{subfigure}{0.45 \textwidth}
        \centering
        \includegraphics[width=\textwidth, trim=25 0 30 30, clip]{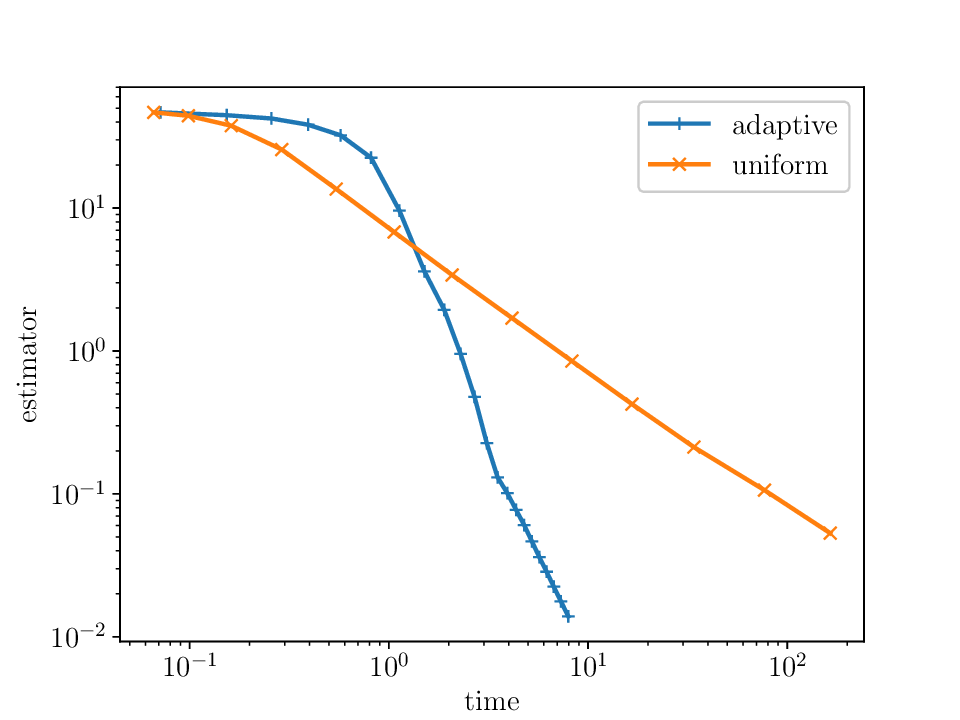}
        \caption{CN discretization, $p=1$}
    \end{subfigure}
    \begin{subfigure}{0.45 \textwidth}
        \centering
         \includegraphics[width=\textwidth, trim=25 0 30 30, clip]{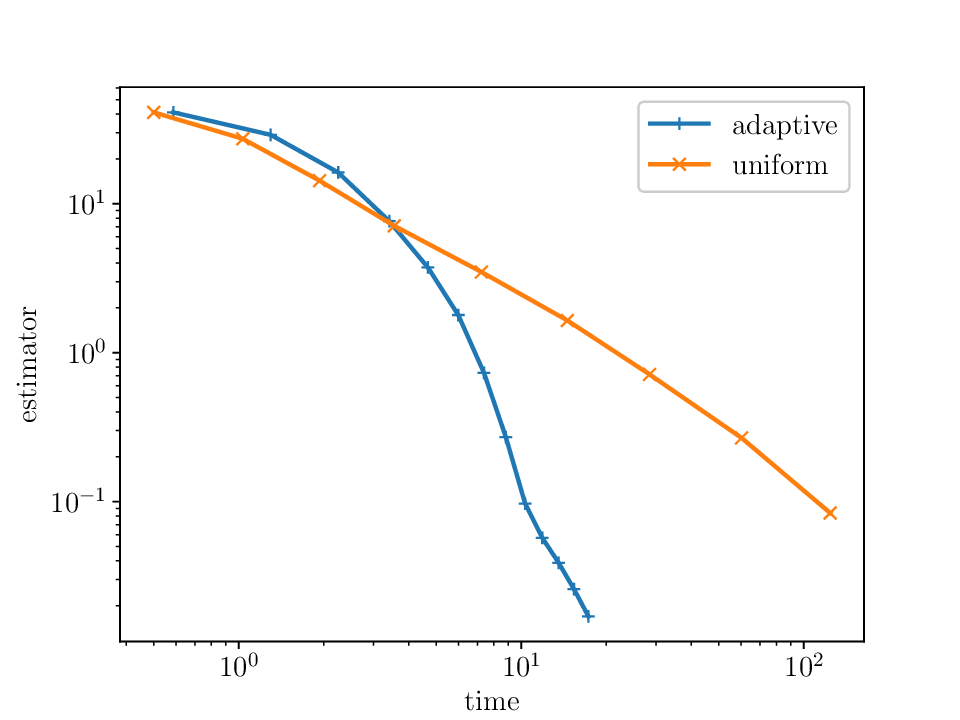}
          \caption{Lobatto discretization, $p=2$}
    \end{subfigure}\\
    \caption{(Linear heat equation) We plot the error estimator $\eta_\TT$ over the total compute time. For the adaptive algorithm, this includes the time necessary to compute all solutions $y_{\TT_0},\ldots, y_{\TT_\ell}$, while in the uniform case we just measure the time to compute $y_{\TT}$ for the respective step size.}
    \label{fig:time_ana}
\end{figure}
%\FloatBarrier
\subsection{Nonlinear Heat Equation}
We consider a nonlinear variant of the heat equation with homogeneous Dirichlet boundary conditions, i.e.
\begin{align*}
    \partial_t u - {\rm div}\Big((1+{\rm exp}(-|\nabla u|^2))\nabla u\Big) &= f \quad \text{in} \quad [t_0, \tend] \times \Omega, \\
    u &= 0 \quad \text{on} \quad [t_0, \tend] \times \partial \Omega, \\
    u(0,\cdot)&=u_0 \quad \text{in} \quad \Omega
\end{align*}
for $\Omega=[0,1]^2$, $f=0$. Introducing the weak formulation
\begin{align*}
    \int_\Omega \partial_t u(t) v dx + \int_\Omega \Big((1+{\rm exp}(-|\nabla u(t)|^2)\Big) \nabla u(t) \nabla v dx =0
\end{align*}
for all $v \in H^1_0(\Omega)$, and a semi-discretization in space by means of first order finite-elements $\XX_h \subset H^1_0(\Omega)$, we obtain a ODE-system that fits into the framework of our theory similar to Section~\ref{sec:heatexample}
\begin{align*}
     \int_\Omega \partial_t u(t) v dx + \int_\Omega \Big((1+{\rm exp}(-|\nabla u(t)|^2)\Big) \nabla u(t) \nabla v dx&=0, \quad \text{for all} \quad v \in \XX_h, \\
     u(0)&=u_0 \in \XX_h.
\end{align*}
Again, we consider the initial condition $u_0=1$, and project it onto $\XX_h$ via the $L^2(\Omega)$ projection. The nonlinear problems are solved with a straightforward Newton method.

\begin{figure}[h]
    \centering
    
    \begin{subfigure}{0.45 \textwidth}
        \centering
        \includegraphics[width=\textwidth, trim=25 0 30 30, clip]{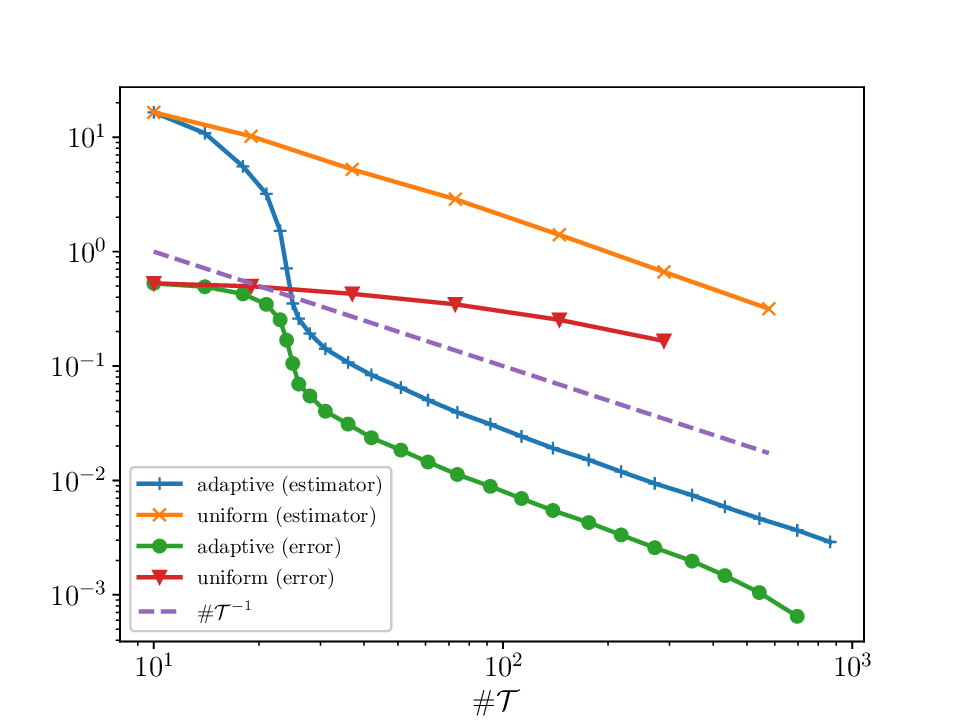}
        \caption{CN discretization, $p=1$}
    \end{subfigure}
    \begin{subfigure}{0.45 \textwidth}
        \centering
         \includegraphics[width=\textwidth, trim=25 0 30 30, clip]{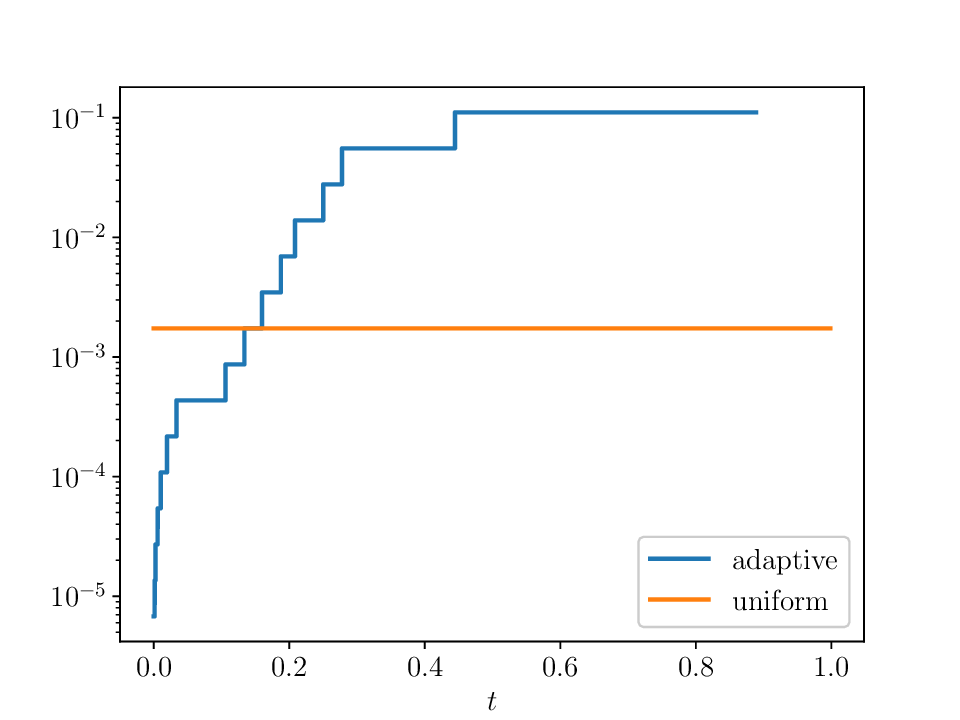}
          \caption{CN discretization, $p=1$}
    \end{subfigure}\\
     \begin{subfigure}{0.45 \textwidth}
        \centering
         \includegraphics[width=\textwidth, trim=25 0 30 30, clip]{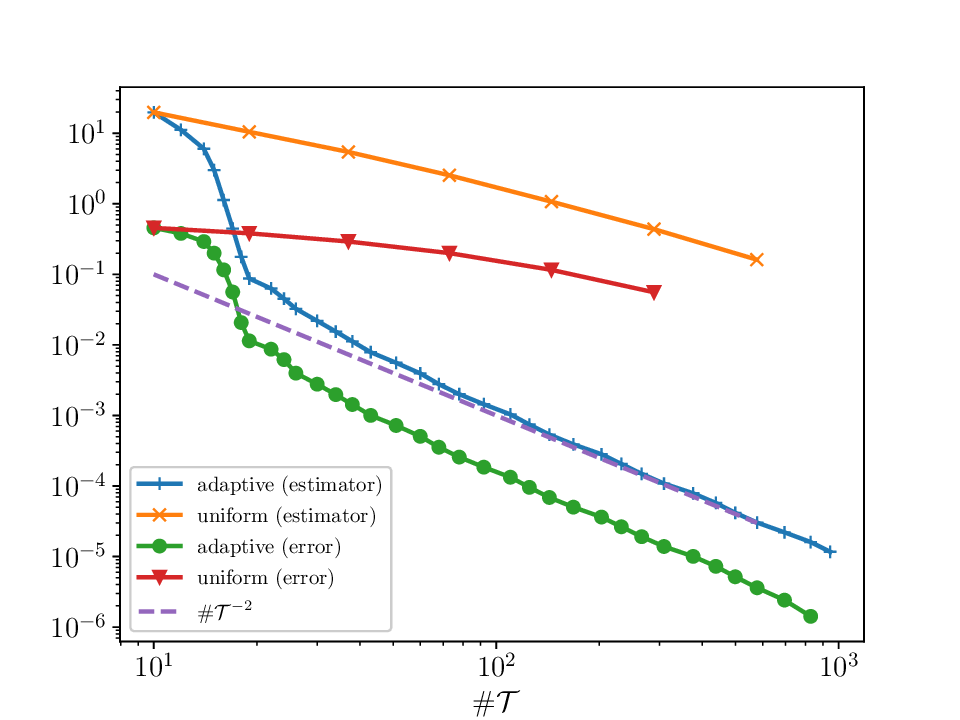}
         \caption{Lobatto discretization, $p=2$}
    \end{subfigure}
    \begin{subfigure}{0.45 \textwidth}
        \centering
         \includegraphics[width=\textwidth, trim=25 0 30 30, clip]{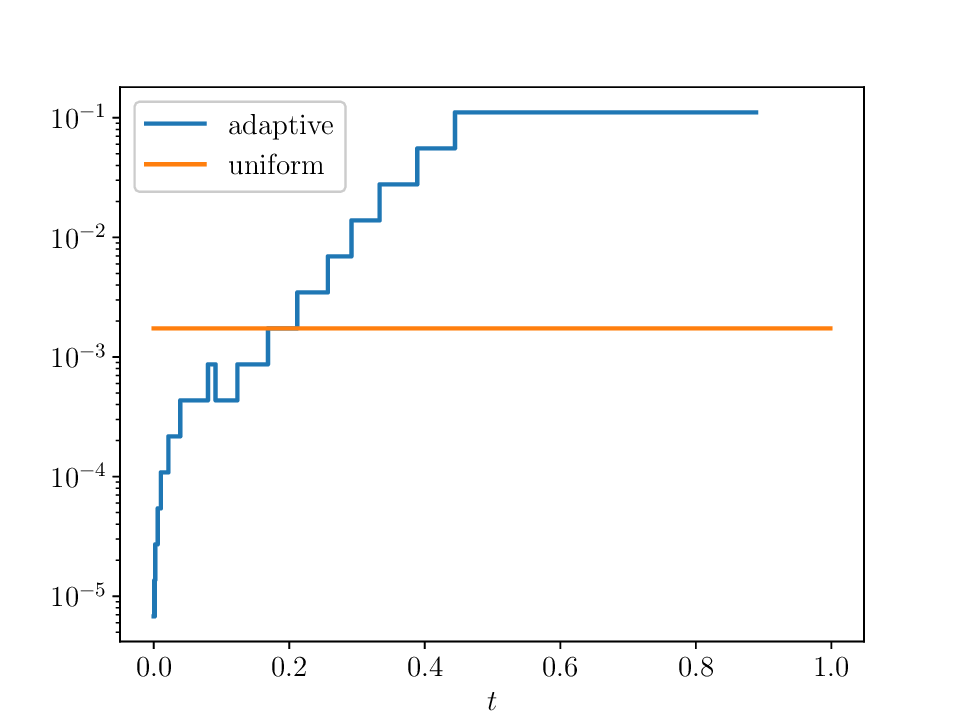}
         \caption{Lobatto discretization, $p=2$}
    \end{subfigure}
    \caption{(Nonlinear heat equation) Left: We plot the $H^1_0$-error $\norm{\partial_t(u-u_\TT)}{L^2((t_0,\tend))}$ and the estimator $\eta_\TT$ over number of time intervals $\#\TT$. The space discretization uses the mesh size $\approx1/10$. Right: We see the adaptive step sizes compared with the uniform step size for the finest computations. The parameter for the adaptive algorithm is $\theta=0.5$.  }
    
    \label{fig:heat_nonlin}
    \end{figure}

    \begin{figure}[h]

    \centering
    \begin{subfigure}{0.45 \textwidth}
        \centering
        \includegraphics[width=\textwidth, trim=5 0 30 30, clip]{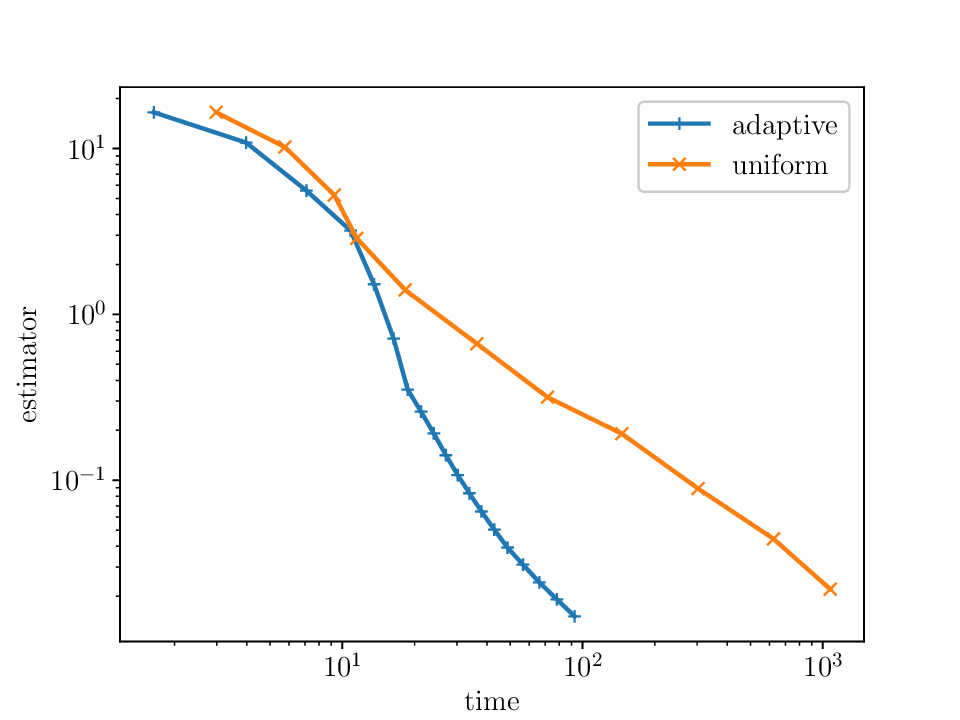}
        \caption{CN discretization, $p=1$}
    \end{subfigure}
    \begin{subfigure}{0.45 \textwidth}
        \centering
         \includegraphics[width=\textwidth, trim=5 0 30 30, clip]{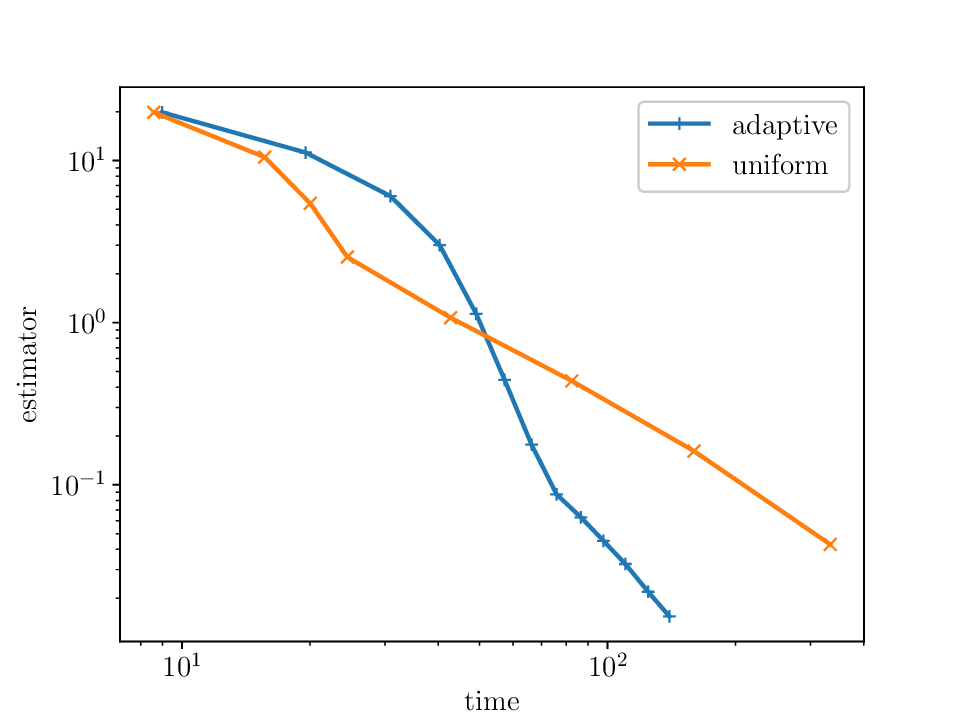}
          \caption{Lobatto discretization, $p=2$}
    \end{subfigure}\\
    \caption{(Nonlinear heat equation) We plot the error estimator $\eta_\TT$ over the total compute time. For the adaptive algorithm, this includes the time necessary to compute all solutions $y_{\TT_0},\ldots, y_{\TT_\ell}$, while in the uniform case we just measure the time to compute $y_{\TT}$ for the respective step size.}
    \label{fig:time_ana_nonlin}
\end{figure}

%\FloatBarrier
\subsection{Van-der-Pol Equation}\label{sec:VDP}
A classical example of a stiff nonlinear ODE system is the Van-der-Pol oscillator given as
\begin{align*}
    \partial_t x&= y,\\
    \partial_t y&= \mu (1-x^2)y-x.
\end{align*}
For this experiment we aim to solve the system on $ [t_0, \tend]=[0,20]$, with $\mu=10$ and $x(0)=y(0)=1$. The error estimator from~\eqref{eq:estimator} reads
\begin{align*}
\eta_\TT(T)^2 = |T|^2\Big(\norm{\partial_t y_\TT-\partial_t^2 x_\TT}{L^2(T)}^2+\norm{(-2\mu x_\TT y_\TT-1)\partial_t x_\TT+ \mu(1-x_\TT^2)\partial_t y_\TT-\partial_t^2y_\TT}{L^2(T)}^2
\Big),
\end{align*}
where the second derivative terms disappear for $p=1$.
We see a clear advantage of the adaptive algorithm over the uniform algorithm, although we do not get a better convergence rate. This is expected, as the exact solution does not show a real singularity, it just becomes steeper with increasing $\mu$. Hence, the adaptive method shows its strength when considering large parameters $\mu$. We particularly emphasize Figure~\ref{fig:roboust}, where we see that the adaptive algorithm is very robust with respect to the parameter $\mu$, while the accuracy of the uniform approach deteriorates significantly. Unfortunately, the theoretical analysis of Theorem~\ref{thm:optim} does not cover this, as the constant $C_{\rm opt}$ depends on the Lipschitz continuity~\eqref{eq:Lip}--\eqref{eq:JacLip} of $F$ and therefore also on $\mu$.
\begin{figure}[h]
    \centering
    
    \begin{subfigure}{0.45 \textwidth}
        \centering
        \includegraphics[width=\textwidth, trim=25 0 30 30, clip]{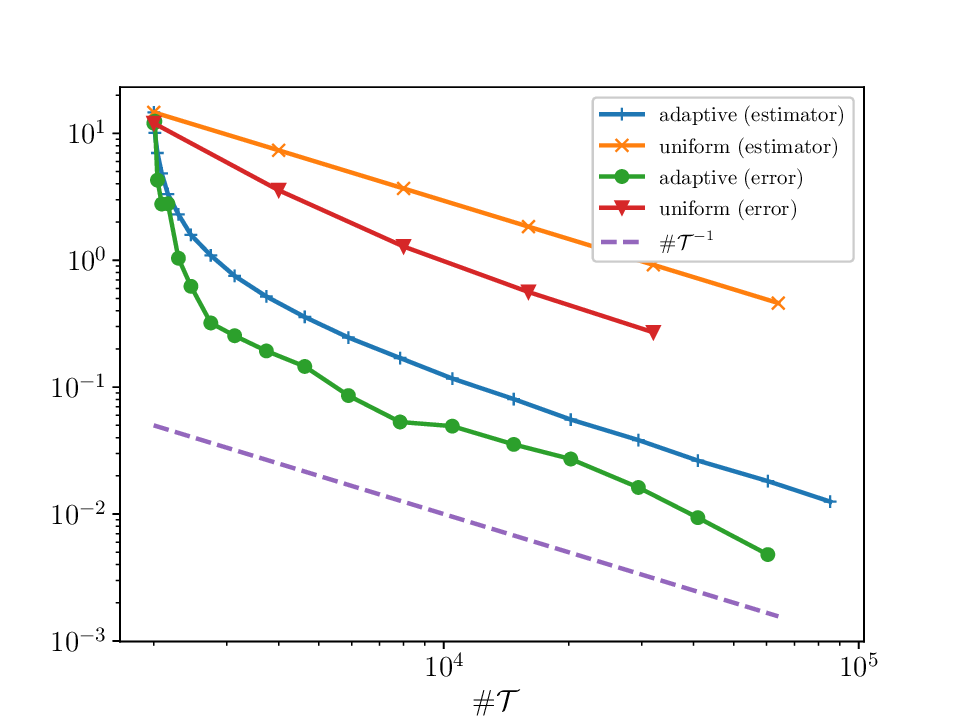}
        \caption{CN discretization, $p=1$}
    \end{subfigure}
    \begin{subfigure}{0.45 \textwidth}
        \centering
         \includegraphics[width=\textwidth, trim=25 0 30 30, clip]{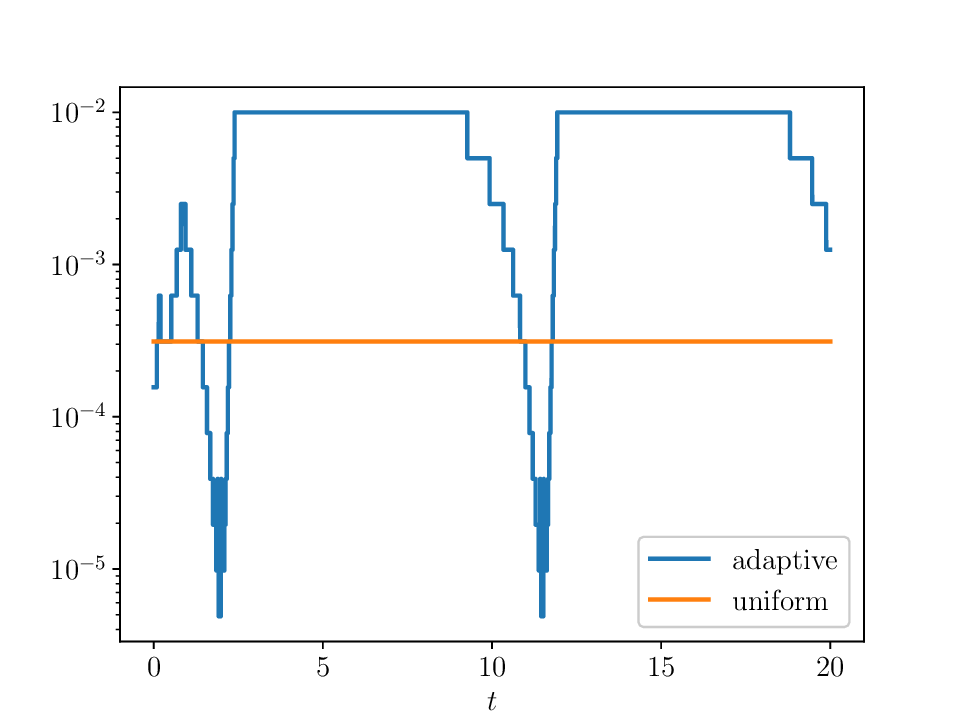}
          \caption{CN discretization, $p=1$}
    \end{subfigure}\\
     \begin{subfigure}{0.45 \textwidth}
        \centering
         \includegraphics[width=\textwidth, trim=25 0 30 30, clip]{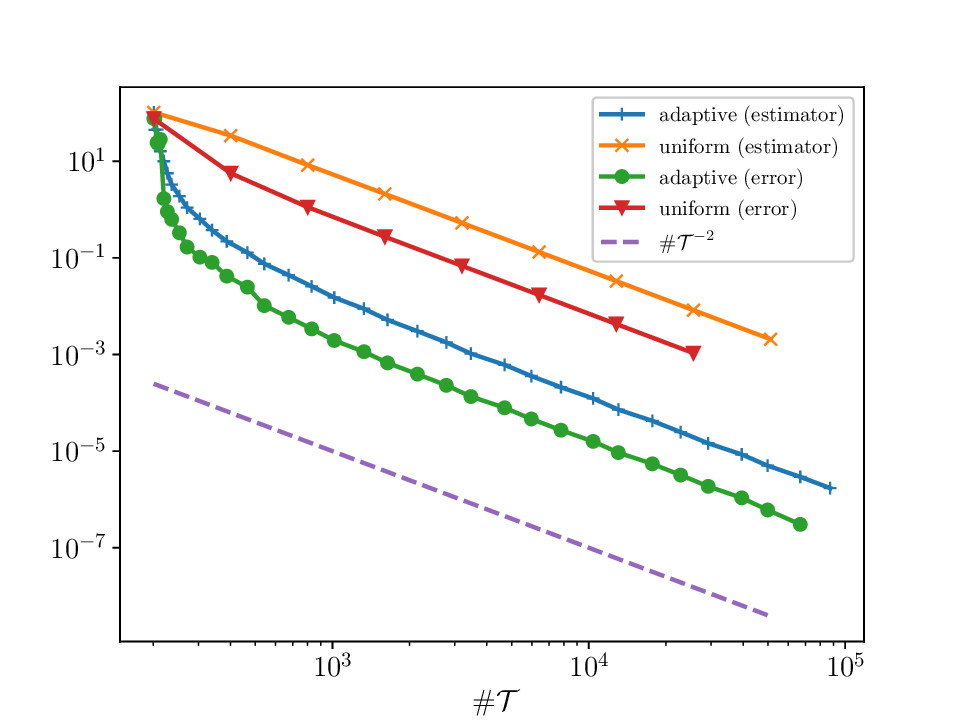}
         \caption{Lobatto discretization, $p=2$}
    \end{subfigure}
    \begin{subfigure}{0.45 \textwidth}
        \centering
         \includegraphics[width=\textwidth, trim=25 0 30 30, clip]{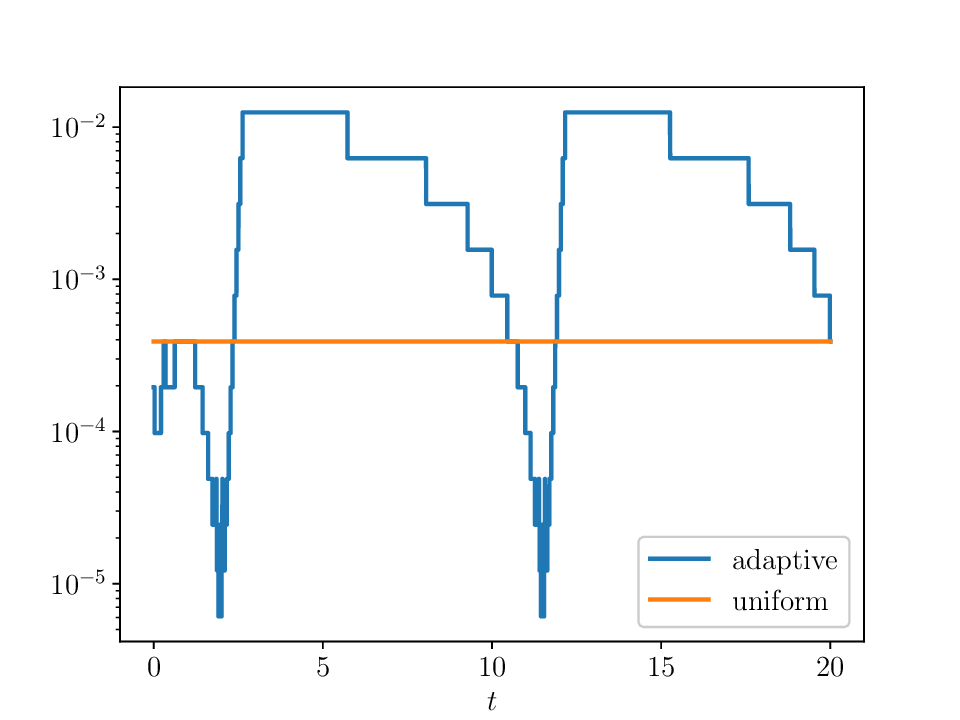}
         \caption{Lobatto discretization, $p=2$}
    \end{subfigure}
    \caption{(Van-der-Pol equation) Left: We plot the $H^1_0$-error $\norm{\partial_t(u-u_\TT)}{L^2((t_0,\tend))}$ and the estimator $\eta_\TT$ over number of time intervals $\#\TT$. Right: We see the adaptive step sizes compared with the uniform step size for the finest computations. The parameter for the adaptive algorithm is $\theta=0.7$.  }
    
    \label{fig:vdp}
    \end{figure}

\begin{figure}
    \begin{center}    
        \includegraphics[width=0.32\textwidth, trim=25 0 30 30, clip]{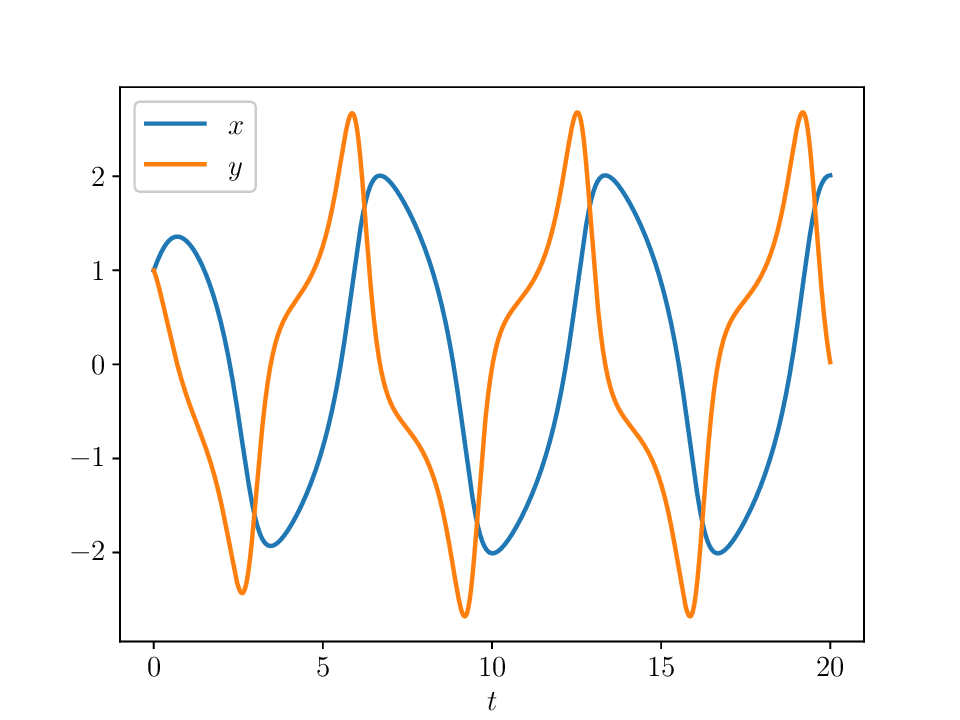}     
         \includegraphics[width=0.32\textwidth, trim=25 0 30 30, clip]{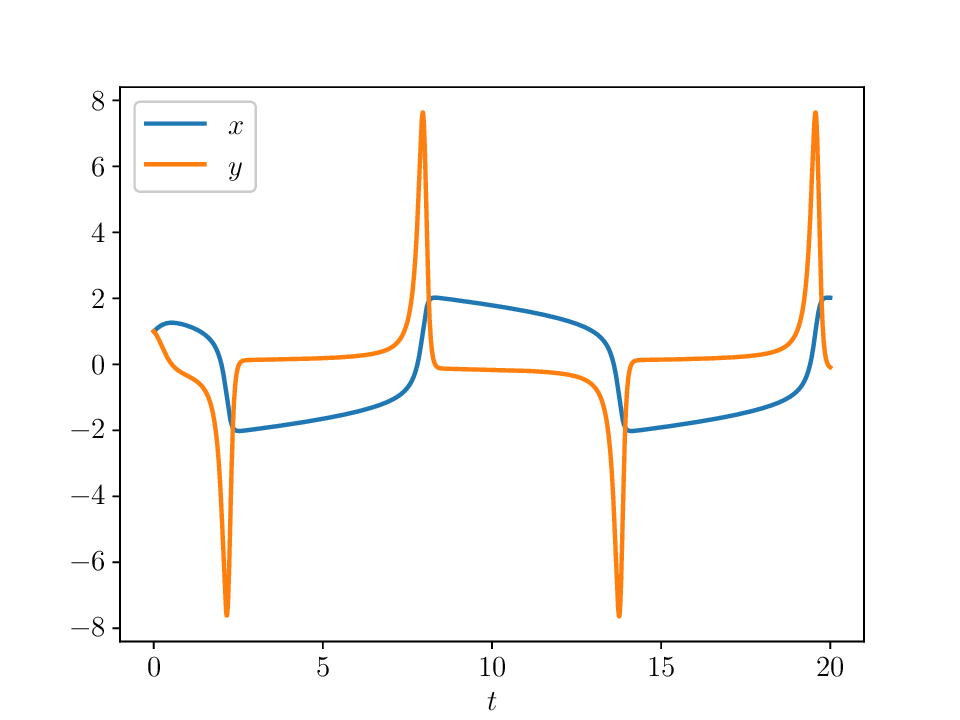}
        \includegraphics[width=0.32\textwidth, trim=25 0 30 30, clip]{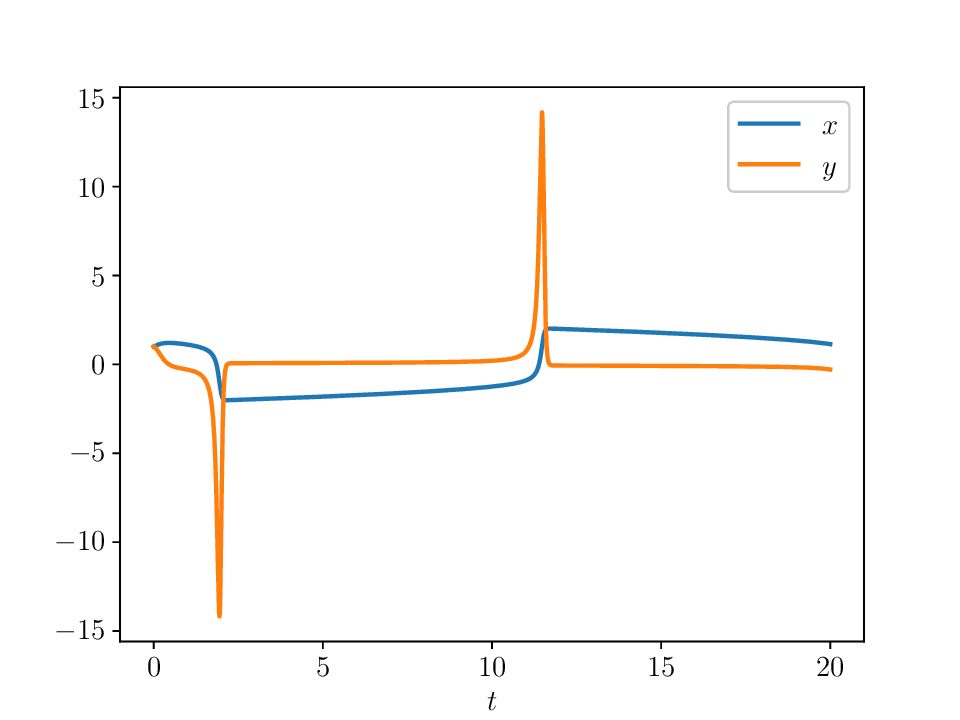}
    \end{center}
    \caption{(Van-der-Pol equation) The solutions of the Van-der-Pol system for $\mu=1,5,10$.}
    \label{fig:sol_vdp}
\end{figure}

\begin{figure}
    \centering
    \begin{subfigure}{0.45 \textwidth}
        \centering
        \includegraphics[width=\textwidth, trim=25 0 30 30, clip]{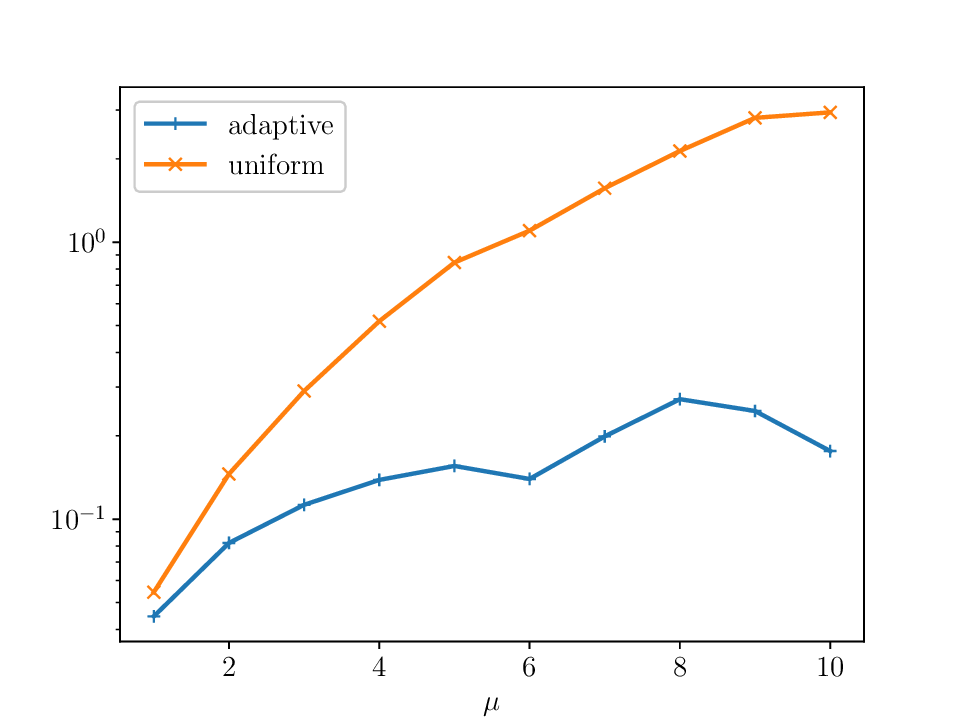}
        \caption{CN}
    \end{subfigure}
    \begin{subfigure}{0.45 \textwidth}
        \centering
        \includegraphics[width=\textwidth, trim=25 0 30 30, clip]{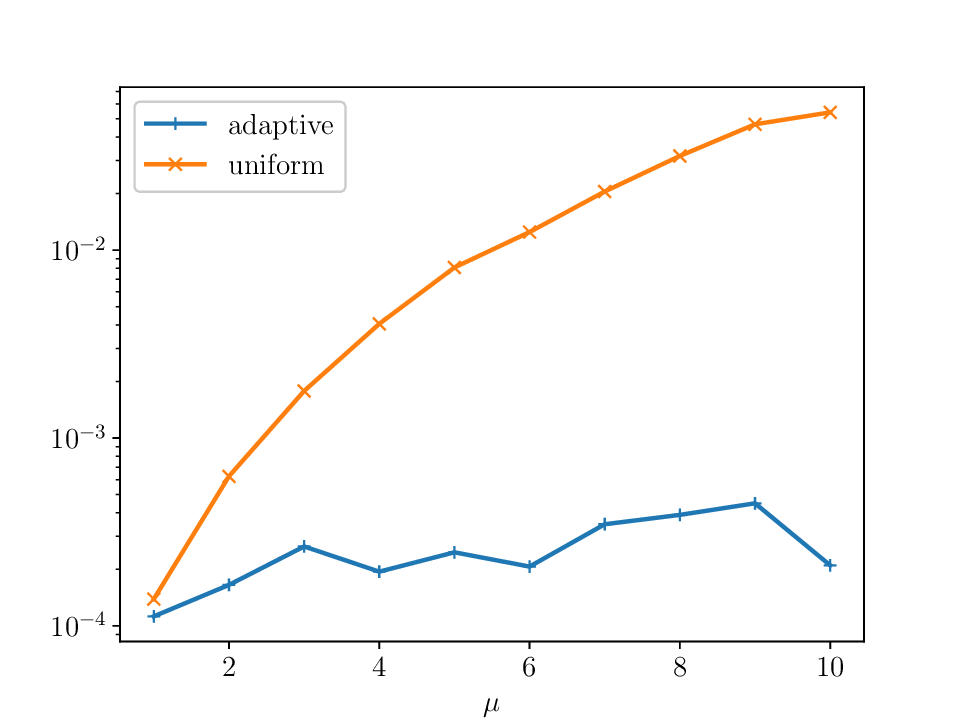}
        \caption{Lobatto}
    \end{subfigure}
    \caption{(Van-der-Pol equation) Uniform and adaptive error estimator with $\approx10000$ time steps for different values of $\mu$. We observe that the adaptive algorithm is very robust with respect to $\mu$, while the uniform approach deteriorates significantly.}
    \label{fig:roboust}
\end{figure}

\subsubsection{Comparison with classical adaptive Radau methods}
To really test the capabilities of the adaptive algorithm, we choose a slight modification of the Van-der-Pol equation, namely we replace the second equation by
\begin{align*}
    \eps\partial_t y&= (1-x^2)y-x
\end{align*}
for a parameter $\eps>0$ that acts similarly to $1/\mu^2$. This modification arises through the rescaling $\widetilde x(t):=x(\mu t)$ and $\widetilde y(t):=\mu y(\mu t)$
and has the advantage that frequency of the typical oscillations of the solution is now independent of $\eps$, which allows us to properly investigate the regime of $\eps\to 0$.
The gold standard for solving this type of problems are the Radau IIA methods developed by Hairer \& Wanner, see e.g.,~\cite{hairer,hairer2} and~\cite{hairer1999}. Their adaptive Radau IIA method is a classical adaptive step-size selection which is based on the comparison of different order approximations within the stages of the Radau schemes, for details we refer to~\cite[Section 6.3]{hairer1999}.

We compare the methods to our adaptive approach (Algorithm~\ref{alg:adaptive} with Radau quadrature, see Section~\ref{sec:quad}) in terms of the error estimator and in terms of the $L^\infty$-error for $\eps=10^{-6}$. Additionally, to the marking in Algorithm~\ref{alg:adaptive}, we refine a time interval whenever the Newton method does not converge within a fixed number of iterations. Although this is not covered by our theory, it helps in finding a feasible initial mesh $\TT_0$.
For the fifth order method, we compare with the \texttt{SciPy} implementation of the adaptive Radau 5 method, which is directly based on the work of Hairer \& Wanner. For the higher order methods, we use our own implementation of the adaptive Radau 9 and Radau 13 methods.

We see in Figures~\ref{fig:radau1}--\ref{fig:radau2} that our adaptive algorithm beats the classical methods in terms of time steps vs. error estimator and $L^\infty$-error.
For the $L^\infty$-comparison, we use the $L^\infty$-error bound from Lemma~\ref{lem:infty_reliability} for marking in Algorithm~\ref{alg:adaptive}, i.e., we use $\eta_{\TT,{\rm max}}(T):=|T|^{1/2}\eta_\TT(T)$ instead of $\eta_\TT(T)$. In terms of compute time, we see in Figure~\ref{fig:radau2}~(B), that a straightforward Python implementation of Algorithm~\ref{alg:adaptive} is only three times slower than the highly optimized \texttt{SciPy} implementation of the adaptive Radau 5 method. We found that marking parameters $\theta\geq 0.9$ give the best runtime performance.

It is remarkable that Algorithm~\ref{alg:adaptive} also achieves the optimal convergence rate for the $L^\infty$-error, which is not covered by our theoretical results.
We did not compare with the adaptive order selection strategy of~\cite{hairer1999}, as we do not have a rigorous way to switch the order of the method optimally in Algorithm~\ref{alg:adaptive}.

We found it useful to stretch the interval by $1/\eps$ and accordingly scale the right-hand side in order to diminish the influence of the $H^1$-seminorm of the error. Interestingly, the stretching seems also to improve the performance of the classical method slightly. It was also essential to use the modified estimator from Section~\ref{sec:modification} for marking in order to avoid over-refinement of later time intervals.

\begin{figure}
    \centering
    \begin{subfigure}{0.45 \textwidth}
        \centering
        \includegraphics[width=\textwidth]{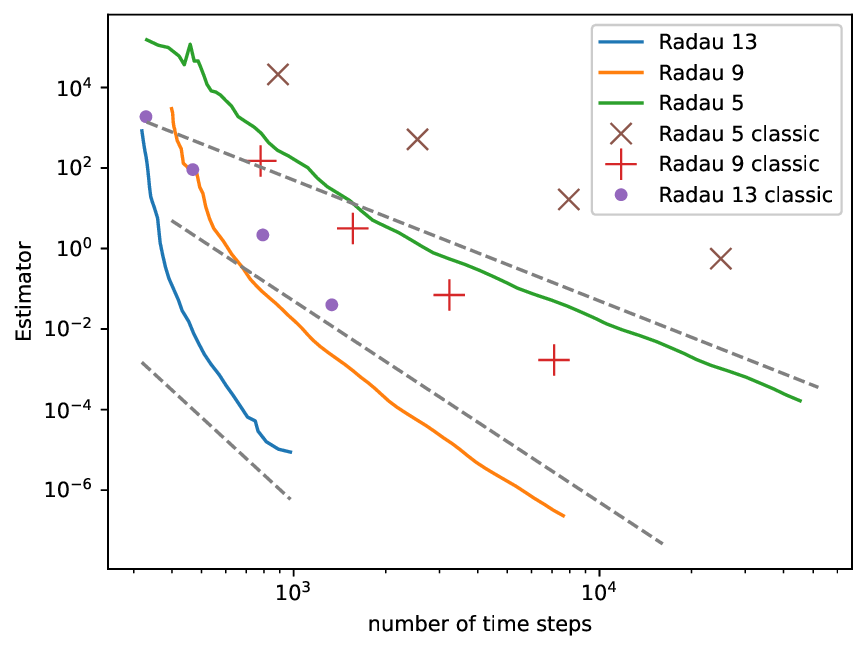} 
        \caption{Algorithm~\ref{alg:adaptive} compared with the classic adaptive Radau methods on $[0,5]$.  The dashed lines represent the respective optimal orders of $3$, $5$ and $7$.}
    \end{subfigure}
    \begin{subfigure}{0.45 \textwidth}
        \centering
        \includegraphics[width=\textwidth]{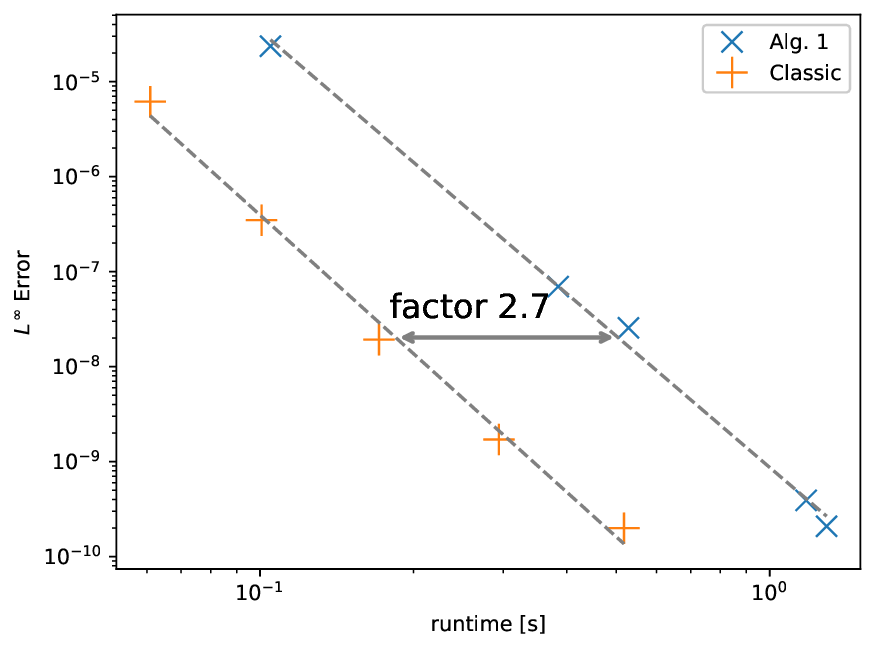}
        \caption{Radau 5 method on $[0,3]$. Runtimes to reach a given $|y-y_\TT|_{\infty}:=\max_{k}|y(k/10)-y_\TT(k/10)|$-error of Algorithm~\ref{alg:adaptive} and the adaptive \texttt{SciPy} implementation.}
    \end{subfigure}
    \caption{(Van-der-Pol equation) We observe that Algorithm~\ref{alg:adaptive} is much superior to the classical adaptive Radau methods in terms of the error estimator $\eta_\TT$.}
    \label{fig:radau1}
\end{figure}
    \begin{figure}
    \centering
    \begin{subfigure}{0.45 \textwidth}
        \centering
        \includegraphics[width=\textwidth]{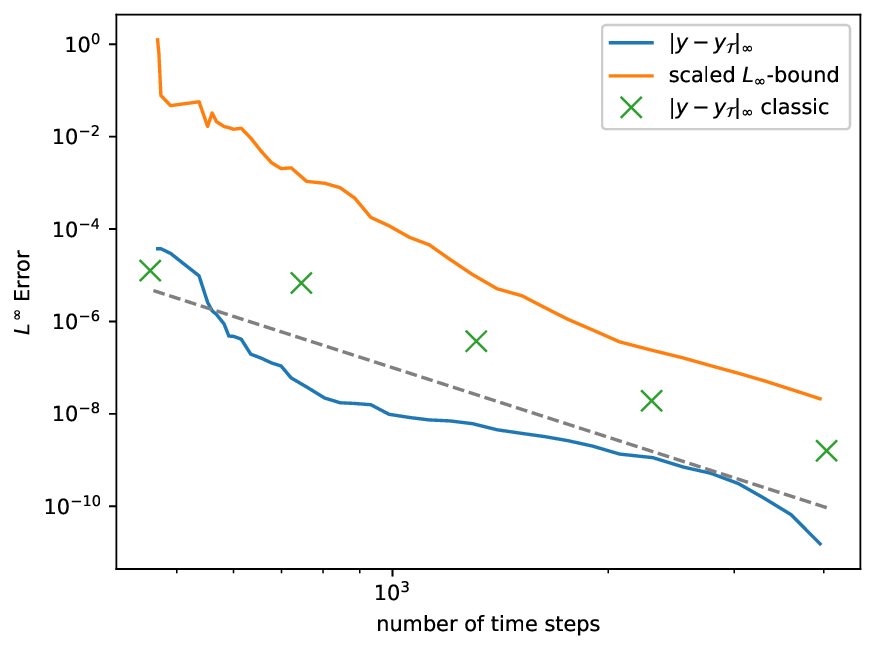} 
        \caption{$[t_0,\tend]=[0,3]$}
    \end{subfigure}       
    \begin{subfigure}{0.45 \textwidth}
        \centering
        \includegraphics[width=\textwidth]{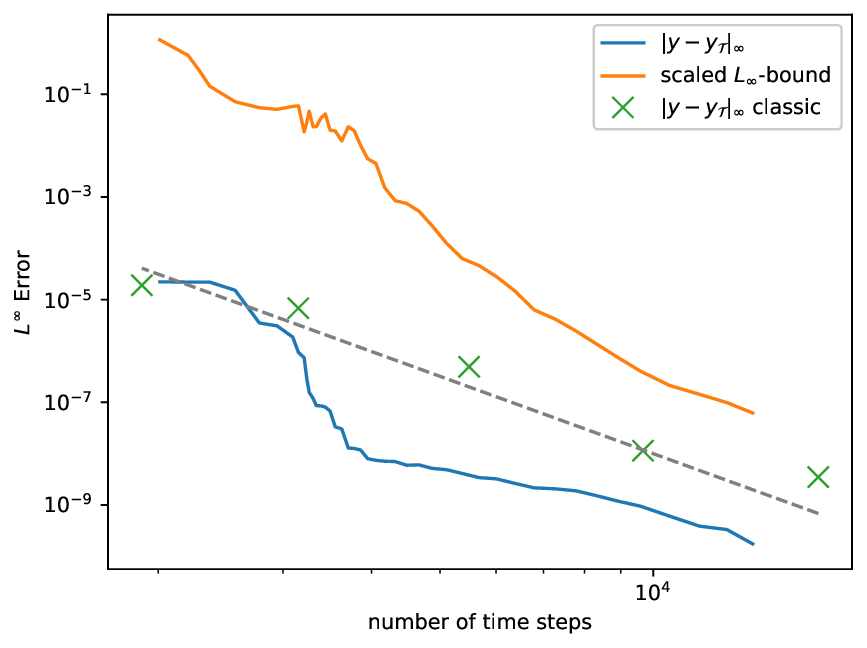} 
        \caption{$[t_0,\tend]=[0,11]$}
    \end{subfigure}
    
    \caption{(Van-der-Pol equation) In terms of $L^\infty$-error, Algorithm~\ref{alg:adaptive} with the Radau 5 method performs slightly better than the classical Radau 5 method on the time intervals $[0,3]$ (left) and $[0,11]$ (right). We measure $|y-y_\TT|_{\infty}:=\max_{k}|y(k/10)-y_\TT(k/10)|$, where $y$ is replaced by a reference solution computed with tolerance $10^{-12}$. The dashed lines represent the respective optimal order of $5$. The orange line represents the error bound from Lemma~\ref{lem:infty_reliability} scaled with $10^{-5}$.}
    \label{fig:radau2}
\end{figure}
%\FloatBarrier
\subsection{Predator-Prey Model}
Another classical example of a stiff nonlinear ODE system is a predator prey model, described by the system 
\begin{align*}
    \partial_t x&= \alpha x- \beta xy,\\
    \partial_t y&= -\gamma y + \delta xy
\end{align*}
For this experiment we aim to solve the system on $[t_0, \tend]=[0,20]$, with $\alpha=1.1$, $\beta=0.4$, $\gamma=0.4$, $\delta =0.1$ and $x(0)=y(0)=10$. The error estimator from $\eqref{eq:estimator}$ reads
\begin{align*}
    \eta_\TT(T)^2=|T|^2\Big(\norm{(\alpha-\beta y_\TT)\partial_t x_\TT-\beta x_\TT \partial_t y_\TT-\partial^2_t x_\TT}{L^2(T)}^2+\norm{\delta y_\TT \partial_t x_\TT+(\delta x_\TT-\gamma)\partial_t y_\TT-\partial^2_t y_\TT}{L^2(T)}^2 \Big),
\end{align*}
where the second derivative terms disappear for $p=1$. Again, we observe a clear advantage of the adaptive approach. As in the previous experiment, there is no improved convergence rate, as the exact solution is smooth.
\begin{figure}[h]
    \centering
    
    \begin{subfigure}{0.45 \textwidth}
        \centering
        \includegraphics[width=\textwidth, trim=25 0 30 30, clip]{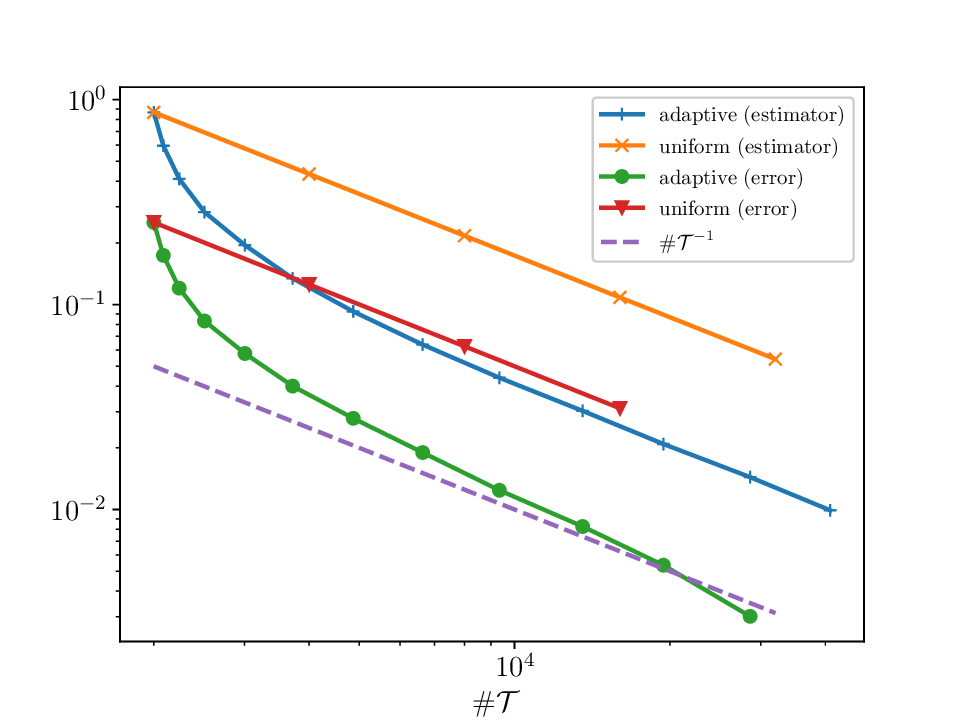}
        \caption{CN discretization, $p=1$}
    \end{subfigure}
    \begin{subfigure}{0.45 \textwidth}
        \centering
         \includegraphics[width=\textwidth, trim=25 0 30 30, clip]{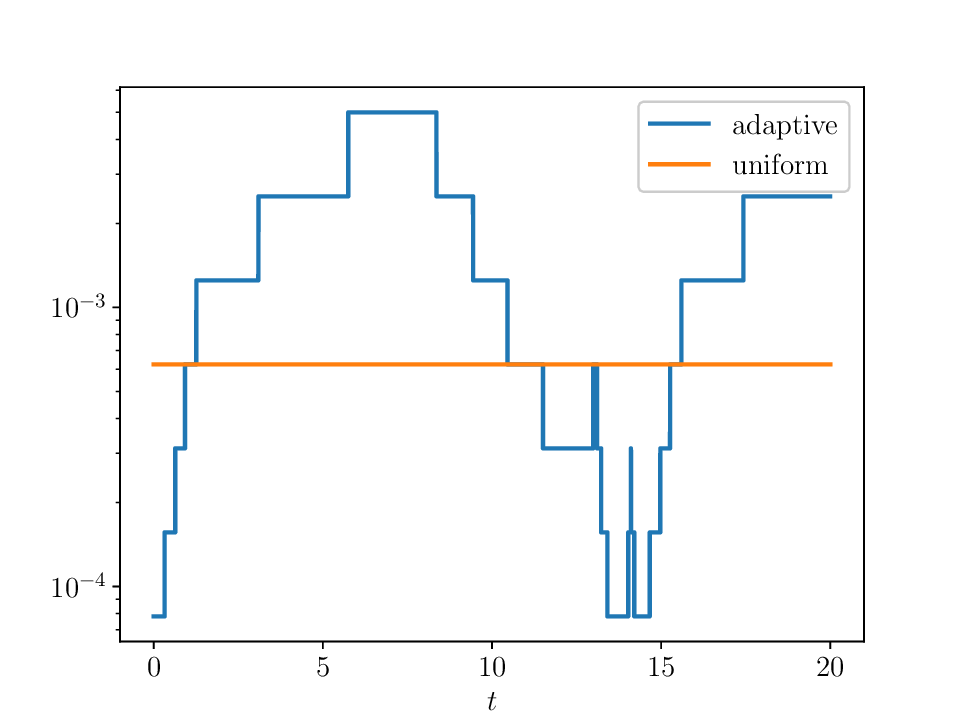}
          \caption{CN discretization, $p=1$}
    \end{subfigure}\\
     \begin{subfigure}{0.45 \textwidth}
        \centering
         \includegraphics[width=\textwidth, trim=25 0 30 30, clip]{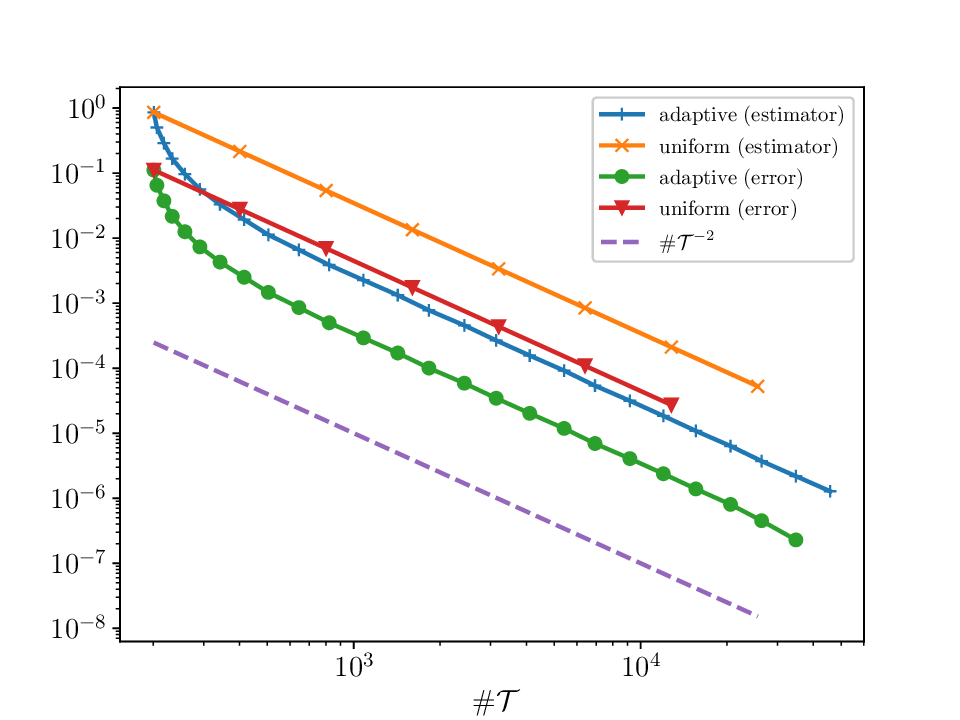}
         \caption{Lobatto discretization, $p=2$}
    \end{subfigure}
    \begin{subfigure}{0.45 \textwidth}
        \centering
         \includegraphics[width=\textwidth, trim=25 0 30 30, clip]{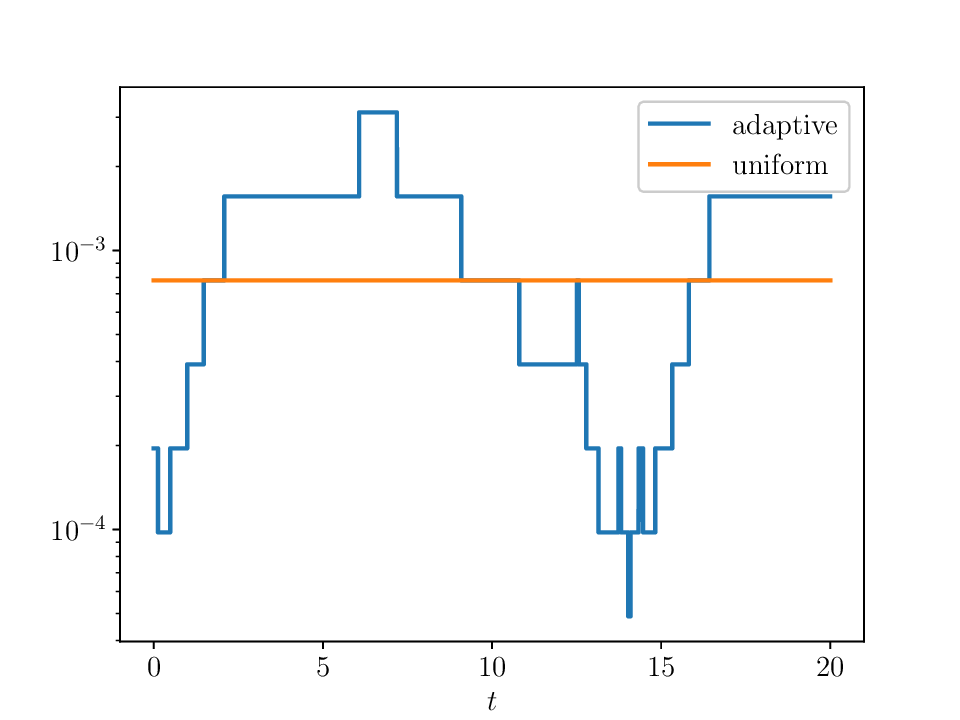}
         \caption{Lobatto discretization, $p=2$}
    \end{subfigure}
    \caption{(Predator-Prey model) Left: We plot the $H^1_0$-error $\norm{\partial_t(u-u_\TT)}{L^2((t_0,\tend))}$ and the estimator $\eta_\TT$ over number of time intervals $\#\TT$. Right: We see the adaptive step sizes compared with the uniform step size for the finest computations. The parameter for the adaptive algorithm is $\theta=0.7$.   }
    
    \label{fig:pp}
    \end{figure}

    \begin{figure}
        \centering
        \includegraphics[width=0.5\linewidth, trim=25 0 30 30, clip]{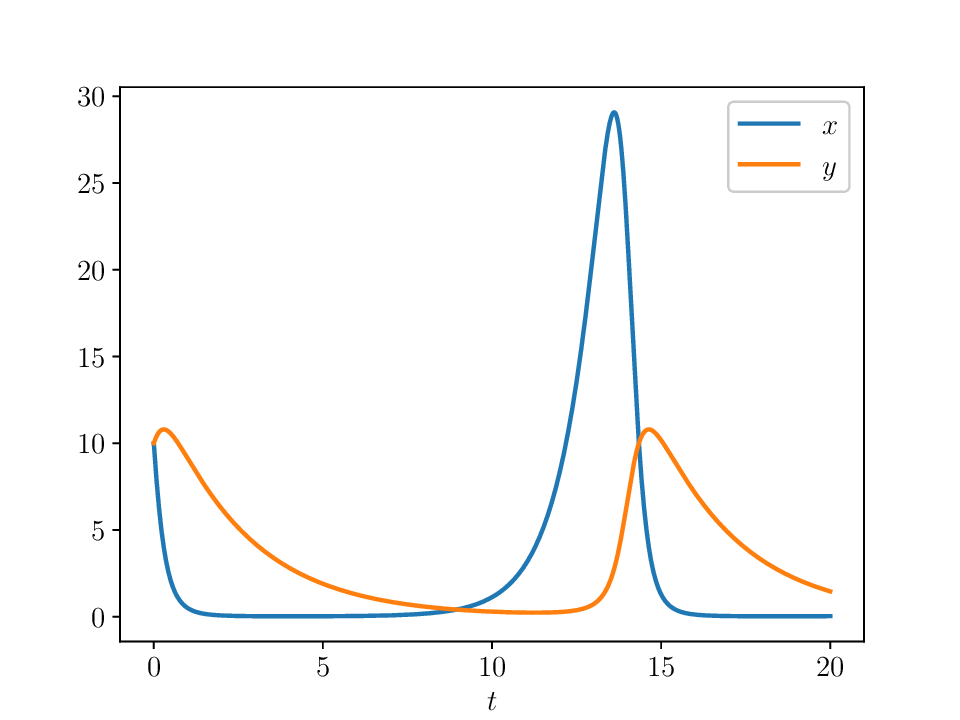}
        \caption{(Predator-Prey model) Solution of the predator prey system.}
        \label{fig:sol_pp}
    \end{figure}
%\FloatBarrier

\section{Proof of Theorem~\ref{thm:optim} and auxiliary results}\label{sec:proof}
In this Section, we will prove Theorem~\ref{thm:optim}. We will first show that the error estimator is an upper bound for the error, i.e., it is reliable. Then, we will show that the adaptive algorithm is optimal in the sense of \eqref{eq:rate_optim} and that the error converges with the same rate as the error estimator.
\subsection{Linear convergence and total time optimality}\label{sec:time}
We will use the framework of~\cite{Carstensen_2014} to show optimality. This method first shows linear convergence of the error estimator, i.e., the output of Algorithm~\ref{alg:adaptive} satisfies
\begin{align}\label{eq:linconv}
    \eta_{\TT_{\ell+k}}\leq C_{\rm lin} q^k \eta_{\TT_\ell}\quad\text{for all }\ell,k\in\N.
\end{align}
This is a crucial intermediate step on the way to optimal convergence rates. Clearly,~\eqref{eq:linconv} ensures that only logarithmically many iterations are needed to reach a desired accuracy. Moreover, it serves as the theoretical justification of the fact that the cumulative compute time for the adaptive algorithm is dominated by the last iteration as shown in the proof of Theorem~\ref{thm:optim}.

\subsection{The error estimator is an upper bound for the error}
The following lemma shows the most fundamental property of the error estimator: It is an upper bound for the error. In a posteriori error estimation, this property is called \emph{reliability}.
\begin{lemma}
\label{lem:gen_reliability}
    If $F$ in \eqref{eq:ode} satisfies \eqref{eq:Lip}, the error estimator~\eqref{eq:estimator} is reliable in the sense
\begin{align*}
    \norm{y-y_{\TT}}{H^1([t_0,\tend])}\leq C_{\rm rel}\eta_{\TT}\quad\text{for all admissible meshes }\TT\in\T,
\end{align*}
where $C_{\rm rel}>0$ is independent of $\TT$.
\end{lemma}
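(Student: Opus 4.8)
The plan is to show that $\eta_\TT$ bounds the $L^2$-norm of the strong residual $r_\TT := \partial_t y_\TT - F(\cdot,y_\TT)$ of the discrete solution, and then to propagate this bound to the error $e := y - y_\TT$ by a Gr\"onwall argument that uses only the Lipschitz hypothesis~\eqref{eq:Lip}.

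First I would rewrite the estimator in terms of $r_\TT$. By the chain rule, on the interior of any $T\in\TT$ one has $\partial_t\big(F(\cdot,y_\TT)\big) = \partial_t F(\cdot,y_\TT) + \nabla_y F(\cdot,y_\TT)\,\partial_t y_\TT$, so the integrand of $\eta_\TT(T)$ in~\eqref{eq:estimator} equals $\partial_t F(\cdot,y_\TT)+\nabla_yF(\cdot,y_\TT)\,\partial_t y_\TT - \partial_t^2 y_\TT = -\partial_t r_\TT$; hence $\eta_\TT(T)=|T|\,\norm{\partial_t r_\TT}{L^2(T)}$ for every $T$, where we use that $F$ is $C^1$ along the discrete trajectory (otherwise $\eta_\TT$ is not even defined). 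Since $p\ge1$, the test space $\PP^{p-1}(\TT)$ contains the $\TT$-piecewise constants, so testing~\eqref{eq:gen_disc} with the constant (vector) functions supported on a single interval $T$ gives the Galerkin orthogonality $\int_T r_\TT\,dt=0$ for every $T\in\TT$. On each interval $r_\TT$ is an $H^1$-function, so the Poincar\'e--Wirtinger inequality yields $\norm{r_\TT}{L^2(T)}\le \pi^{-1}|T|\,\norm{\partial_t r_\TT}{L^2(T)} = \pi^{-1}\eta_\TT(T)$; summing over $T\in\TT$ gives $\norm{r_\TT}{L^2([t_0,\tend])}\le\pi^{-1}\eta_\TT$.

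Then I would close the argument with Gr\"onwall. Since $\partial_t y = F(\cdot,y)$ and $e(t_0)=y_0-y_0=0$, the error satisfies $\partial_t e = \big(F(\cdot,y)-F(\cdot,y_\TT)\big) - r_\TT$, and~\eqref{eq:Lip} gives $|\partial_t e(t)|\le L_1|e(t)|+|r_\TT(t)|$ for almost every $t$. Integrating from $t_0$ and applying Cauchy--Schwarz, $|e(t)|\le L_1\int_{t_0}^t|e|\,ds + (\tend-t_0)^{1/2}\norm{r_\TT}{L^2([t_0,\tend])}$, so Gr\"onwall's lemma yields $\norm{e}{L^\infty([t_0,\tend])}\le e^{L_1(\tend-t_0)}(\tend-t_0)^{1/2}\norm{r_\TT}{L^2([t_0,\tend])}$. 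Finally $\norm{e}{L^2([t_0,\tend])}\le(\tend-t_0)^{1/2}\norm{e}{L^\infty([t_0,\tend])}$ and $\norm{\partial_t e}{L^2([t_0,\tend])}\le L_1\norm{e}{L^2([t_0,\tend])}+\norm{r_\TT}{L^2([t_0,\tend])}$, so $\norm{e}{H^1([t_0,\tend])}\le \Crel\,\eta_\TT$ with a constant $\Crel$ depending only on $L_1$ and $\tend-t_0$, hence independent of $\TT$.

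I expect the only delicate point to be the bookkeeping in the middle step: $r_\TT$ generally does \emph{not} lie in $H^1([t_0,\tend])$ globally, because $\partial_t y_\TT$ jumps across the time steps, so the zero-mean property and the Poincar\'e inequality have to be applied one interval at a time --- which is precisely the form in which the Galerkin orthogonality and the elementwise estimator are available, so this is more a matter of care than of difficulty. Everything else (chain rule, Poincar\'e, Gr\"onwall) is standard, and the stronger regularity~\eqref{eq:JacLip} is not needed for reliability --- it enters only later in the optimality analysis.
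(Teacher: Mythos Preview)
Your proof is correct and follows essentially the same route as the paper's: both define the residual $r_\TT=\partial_t y_\TT-F(\cdot,y_\TT)$, use the Galerkin orthogonality $\int_T r_\TT=0$ together with an elementwise Poincar\'e inequality to bound $\norm{r_\TT}{L^2}$ by $\eta_\TT$, and then run a Gr\"onwall argument based on~\eqref{eq:Lip} to pass from the residual to the $H^1$-error. The only cosmetic difference is that you do Poincar\'e first and Gr\"onwall second (going through $L^\infty$), whereas the paper applies Gr\"onwall first to get a pointwise bound in terms of $\int|R|$ and then invokes Poincar\'e; the constants differ slightly but the logic is the same.
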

\begin{proof}
We will use variants of Gronwall's lemma. Let us define the residual $R(t):=\partial_t y_{\TT}(t)-F(t,y_{\TT}(t))$. It holds for almost every $t \in (t_0, \tend)$ that
\begin{align*}
    \partial_t y(t)-\partial_t y_{\TT}(t)=F(t,y(t))-F(t,y_{\TT}(t))-R(t).
\end{align*}
Integrating from $t_0$ to $t$ and using condition \eqref{eq:Lip} yields
\begin{align*}
    |y(t)-y_\TT(t)| &\leq \int_{t_0}^t |F(s,y(s))-F(s,y_{\TT}(s))|ds+\int_{t_0}^t |R(s)|ds,\\
    & \leq L_1\int_{t_0}^t |y(s)-y_\TT(s)|ds+\int_{t_0}^t |R(s)|ds.
\end{align*}
Gronwall's lemma~\cite[Prop. 2.1]{emmrich1999discrete} gives
\begin{align}\label{eq:firstest}
    |y(t)-y_\TT(t)| \leq e^{L_1 (t-t_0)}\int_{t_0}^t |R(s)|ds \leq e^{L_1 (\tend-t_0)}\int_{t_0}^{\tend} |R(s)|ds.
\end{align}
Moreover, we get
\begin{align*}
    \norm{y-y_{\TT}}{H^1([t_0,\tend])}^2& \leq C^2_{\rm pc} \norm{ \partial_t y-\partial_t y_{\TT}}{L^2([t_0,\tend])}^2=C^2_{\rm pc}\int_{t_0}^{\tend} \Big(F(t,y(t))-F(t,y_{\TT}(t))-R(t)\Big)^2 dt \\
    & \leq C^2_{\rm pc} 2 L_1^2 \int_{t_0}^{\tend} \Big(y(t)-y_\TT(t) \Big)^2 dt + 2\int_{t_0}^{\tend} R(t)^2 dt,
\end{align*}
where we denote the Poincaré constant used in the first step by $C_{\rm pc}=\sqrt{\tend-t_0}$. With~\eqref{eq:firstest}, we get
\begin{align}\label{eq:resrel}
\begin{split}
 \norm{y-y_{\TT}}{H^1([t_0,\tend])}^2
    & \leq C^2_{\rm pc}2L_1^2 e^{2L_1 (\tend-t_0)}\int_{t_0}^{\tend} \Big(\int_{t_0}^{\tend}|R(s)|ds\Big)^2dt+2\int_{t_0}^{\tend} R(t)^2 dt\\
    & \leq C^2_{\rm pc}\Big(2L_1^2 e^{2L_1 (\tend-t_0)} (\tend-t_0)^2+2\Big) \int_{t_0}^{\tend} R(t)^2 dt.
    \end{split}
\end{align}
In~\eqref{eq:gen_disc}, we may choose $v \in P^0(\TT)$ to show $\int_TR(t)dt=0$ for all $T\in\TT$. Thus, we can use a Poincaré type inequality as in~\cite{ANASTASSIOU20081102} to see
\begin{align*}
    \norm{y-y_{\TT}}{H^1([t_0,\tend])}^2 \leq C_{\rm rel}^2 \sum_{T \in \TT} |T|^2 \norm{\partial_t F(\cdot,y_{{\TT}})+\nabla_yF(\cdot, y_{{\TT}}) \partial_t y_{{\TT}}-\partial^2_t y_\TT}{L^2(T)}^2.
\end{align*}
This concludes the proof with $C^2_{\rm rel}=C_{\rm pc}^2\Big(2L_1^2 e^{2L_1 (\tend-t_0)} (\tend-t_0)^2+2\Big).$
\end{proof}
With a small refinement of the proof, we can also obtain a computable upper bound for the pointwise error.
\begin{lemma}\label{lem:infty_reliability}
    If $F$ in \eqref{eq:ode} satisfies \eqref{eq:Lip}, the error estimator~\eqref{eq:estimator} satisfies
\begin{align*}
 \norm{y-y_{\TT}}{L^\infty([t_0,\tend])}\leq e^{L_1(\tend-t_0)}\max_{T\in\TT}|T|^{1/2}\eta_\TT(T).
\end{align*}
\end{lemma}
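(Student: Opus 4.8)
The plan is to sharpen the pointwise bound~\eqref{eq:firstest} already established in the proof of Lemma~\ref{lem:gen_reliability}, using only one extra ingredient: the \emph{zero-mean property} of the residual on each element. Keep the residual $R(t):=\partial_t y_{\TT}(t)-F(t,y_{\TT}(t))$. Two facts from the earlier proof carry over. First, testing~\eqref{eq:gen_disc} with $v\in\PP^0(\TT)$ gives $\int_T R(s)\,ds=0$ for every $T\in\TT$. Second, since $\partial_t R=\partial_t^2y_\TT-\partial_tF(\cdot,y_\TT)-\nabla_yF(\cdot,y_\TT)\partial_ty_\TT$, the elementwise estimator can be rewritten as $\eta_\TT(T)=|T|\norm{\partial_t R}{L^2(T)}$. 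The gain over Lemma~\ref{lem:gen_reliability} is that the earlier proof bounds $\int_{t_0}^{\tend}|R|\,ds$ by a whole sum of elementwise terms, whereas here the zero-mean structure collapses everything into a single maximum.

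The key step is a bound on $\big|\int_{t_0}^tR(s)\,ds\big|$ that is uniform in $t$. Fix $t\in[t_0,\tend]$ and let $T_k=[t_{k-1},t_k]\in\TT$ be an element containing $t$. Since $\int_{T_j}R\,ds=0$ for every $j$, the integrals over the fully contained elements $T_1,\dots,T_{k-1}$ drop out, so $\int_{t_0}^tR(s)\,ds=\int_{t_{k-1}}^tR(s)\,ds$. On $T_k$ the function $R$ is continuous with vanishing mean, hence $R(t^\ast)=0$ for some $t^\ast\in T_k$; the fundamental theorem of calculus and the Cauchy--Schwarz inequality then give
\begin{align*}
  \norm{R}{L^\infty(T_k)}\le\int_{T_k}|\partial_tR(s)|\,ds\le |T_k|^{1/2}\norm{\partial_tR}{L^2(T_k)}=|T_k|^{-1/2}\eta_\TT(T_k),
\end{align*}
and therefore
\begin{align*}
  \Big|\int_{t_0}^tR(s)\,ds\Big|=\Big|\int_{t_{k-1}}^tR(s)\,ds\Big|\le (t-t_{k-1})\norm{R}{L^\infty(T_k)}\le |T_k|^{1/2}\eta_\TT(T_k)\le\max_{T\in\TT}|T|^{1/2}\eta_\TT(T).
\end{align*}

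Then I would finish by Gronwall exactly as in Lemma~\ref{lem:gen_reliability}. Integrating the error identity $\partial_t(y-y_\TT)=F(\cdot,y)-F(\cdot,y_\TT)-R$ from $t_0$ to $t$ and using~\eqref{eq:Lip},
\begin{align*}
  |y(t)-y_\TT(t)|\le L_1\int_{t_0}^t|y(s)-y_\TT(s)|\,ds+\Big|\int_{t_0}^tR(s)\,ds\Big|\le L_1\int_{t_0}^t|y(s)-y_\TT(s)|\,ds+\max_{T\in\TT}|T|^{1/2}\eta_\TT(T),
\end{align*}
so Gronwall's lemma~\cite[Prop.~2.1]{emmrich1999discrete} yields $|y(t)-y_\TT(t)|\le e^{L_1(t-t_0)}\max_{T\in\TT}|T|^{1/2}\eta_\TT(T)$, and taking the supremum over $t\in[t_0,\tend]$ gives the claim. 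There is no serious obstacle; the proof is a short refinement. The single point that must not be skipped is the use of $\int_T R\,ds=0$ on each element: it is needed both to discard the fully resolved intervals in $\int_{t_0}^tR\,ds$ and to upgrade the elementwise control of $R$ from $L^2$ to $L^\infty$, which is precisely what replaces the naive sum $\sum_{T\in\TT}|T|^{1/2}\eta_\TT(T)$ by the stated maximum.
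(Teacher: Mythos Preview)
Your proof is correct and follows essentially the same route as the paper: collapse $\int_{t_0}^t R$ to the partial interval via the zero-mean property, bound the remainder by $|T|^{3/2}\|\partial_t R\|_{L^2(T)}=|T|^{1/2}\eta_\TT(T)$, and apply Gronwall. One small caveat: for $d>1$ the residual $R\colon T_k\to\R^d$ need not vanish at a single point $t^\ast$, but the bound $\|R\|_{L^\infty(T_k)}\le|T_k|^{1/2}\|\partial_t R\|_{L^2(T_k)}$ still holds componentwise (each $R_i$ has a zero in $T_k$), which is all you use; the paper sidesteps this by invoking a Poincar\'e inequality on $R$ directly.
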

\begin{proof}
As in the proof of Lemma~\ref{lem:gen_reliability}, we obtain
\begin{align*}
    |y(t)-y_\TT(t)| 
    & \leq L_1\int_{t_0}^t |y(s)-y_\TT(s)|ds+\Big|\int_{t_0}^t R(s)\,ds\Big|.
\end{align*}
Since $\int_TR(t)dt=0$ for all $T\in\TT$, we have $\int_{t_0}^t R(s)\,ds = \int_{t_-}^t R(s)\,ds$, where $t_-$ is the left endpoint of the time interval containing $t$. A Poincar\'e inequality shows
\begin{align*}
\Big|\int_{t_0}^t R(s)\,ds\Big|\leq \max_{T\in\TT} |T|^{3/2}\norm{\partial_t R}{L^2(T)}
\end{align*}
and Gronwall's lemma implies
\begin{align}\label{eq:firstest}
    |y(t)-y_\TT(t)| \leq e^{L_1 (t-t_0)}\max_{T\in\TT} |T|^{3/2}\norm{\partial_t R}{L^2(T)}.
\end{align}
\end{proof}

\subsection{Convergence of uniform time stepping}
Since the proof technique is very similar to the previous result, we make a short excursion to show that if the step size is sufficiently small, also the approximation error can be arbitrarily small. 
\begin{lemma}\label{lem:unifconv}
All approximate solutions $y_\TT$ are uniformly bounded in the sense $\norm{y_\TT}{H^1([t_0,\tend])}\leq C$ for a constant $C<\infty$ that depends only on the exact solution $y$, $\tend$ and $L_1$. Moreover, let $\tau_\TT:= \max_{T\in\TT}|T|$ denote the maximal step size of $\TT$. There holds 
    \begin{align*}
        \lim_{\tau_\TT\to 0 }\norm{y-y_\TT}{H^1([t_0,\tend])} = 0.
    \end{align*}
\end{lemma}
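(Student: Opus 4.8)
The plan is to prove the two claims in turn, leaning on the apparatus built in the proof of Lemma~\ref{lem:gen_reliability}; the uniform bound does the real work. For the uniform $H^1$-bound, I would first note that $\partial_t y_\TT\in\PP^{p-1}(\TT)$ whenever $y_\TT\in\SS^p(\TT)$, so $v=\partial_t y_\TT$ is an admissible test function in \eqref{eq:gen_disc}; this yields the energy identity $\norm{\partial_t y_\TT}{L^2([t_0,\tend])}^2=\int_{t_0}^{\tend}F(t,y_\TT(t))\cdot\partial_t y_\TT(t)\,dt$ and hence $\norm{\partial_t y_\TT}{L^2([t_0,\tend])}\le\norm{F(\cdot,y_\TT)}{L^2([t_0,\tend])}$. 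Next, testing \eqref{eq:gen_disc} with the piecewise-constant function $v=\mathbf 1_{[t_0,t_i]}$ (admissible since $p\ge1$) gives the exact nodal identity $y_\TT(t_i)=y_0+\int_{t_0}^{t_i}F(s,y_\TT(s))\,ds$ at every mesh node $t_i$. Splitting $|F(\cdot,y_\TT)|\le|F(\cdot,y)|+L_1|y_\TT-y|$ via \eqref{eq:Lip}, using $\partial_t y=F(\cdot,y)$, and controlling the interior values of the polynomial $y_\TT|_T$ by the nodal ones through $|y_\TT(t)|\le|y_\TT(t_{i-1})|+|T|^{1/2}\norm{\partial_t y_\TT}{L^2(T)}$ for $t\in T$, the nodal identity becomes a discrete Gronwall inequality for the numbers $|y_\TT(t_i)|$. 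Resolving it gives $\norm{y_\TT}{L^\infty([t_0,\tend])}\le C$, and feeding this back into the energy bound gives $\norm{y_\TT}{H^1([t_0,\tend])}\le C$, with $C$ depending only on $y$, $\tend-t_0$ and $L_1$.

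For the convergence claim, the decisive observation is that the residual $R:=\partial_t y_\TT-F(\cdot,y_\TT)$ appearing in the proof of Lemma~\ref{lem:gen_reliability} is exactly the elementwise $L^2$-best-approximation error of $F(\cdot,y_\TT)$ in $\PP^{p-1}(\TT)$: because $\partial_t y_\TT\in\PP^{p-1}(\TT)$ while $R\perp\PP^{p-1}(\TT)$ by \eqref{eq:gen_disc}, we have $\partial_t y_\TT=\Pi_{p-1}F(\cdot,y_\TT)$ and therefore $R=-(\mathrm{id}-\Pi_{p-1})F(\cdot,y_\TT)$, where $\Pi_{p-1}$ is the elementwise $L^2$-projection onto $\PP^{p-1}(\TT)$. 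Invoking the first part, $y_\TT$ stays in a fixed compact set and $\norm{\partial_t y_\TT}{L^2}\le C$, so from $|F(t,y_\TT(t))-F(s,y_\TT(s))|\le L_1|y_\TT(t)-y_\TT(s)|+\omega_F(|t-s|)$ and $|y_\TT(t)-y_\TT(s)|\le|t-s|^{1/2}\norm{\partial_t y_\TT}{L^2}$, the map $t\mapsto F(t,y_\TT(t))$ has a modulus of continuity $\widetilde\omega$ that is \emph{independent of $\TT$}. Since the elementwise projection reproduces constants, $\norm{R}{L^2(T)}\le|T|^{1/2}\widetilde\omega(|T|)$ on every $T\in\TT$, so $\norm{R}{L^2([t_0,\tend])}^2\le(\tend-t_0)\,\widetilde\omega(\tau_\TT)^2\to0$ as $\tau_\TT\to0$. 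Plugging this into inequality \eqref{eq:resrel} from the proof of Lemma~\ref{lem:gen_reliability}, namely $\norm{y-y_\TT}{H^1([t_0,\tend])}^2\le C\int_{t_0}^{\tend}R(t)^2\,dt$, closes the argument. Note that one cannot shortcut through $\eta_\TT$ here, since the estimator contains $\partial_t^2 y_\TT$, which an inverse estimate only keeps bounded, not small.

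The step I expect to be the main obstacle is the uniform bound: the estimate for $\norm{F(\cdot,y_\TT)}{L^2}$ presupposes exactly the control of $y_\TT$ we want, so the circularity must be broken by working at the mesh nodes---where \eqref{eq:gen_disc} collapses to an exact integral identity to which Gronwall's lemma applies directly---and only then transferring the bound into the interior of each time interval by an inverse-type estimate for polynomials, all while keeping every constant independent of the mesh $\TT$. A mild step-size restriction of the form $L_1\tau_\TT\lesssim1$ surfaces when resolving the discrete Gronwall inequality, but it is harmless here, both because it is subsumed by the standing assumption that $\TT_0$ is sufficiently fine and because the convergence claim only concerns the regime $\tau_\TT\to0$. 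With the uniform bound in hand, the convergence part is essentially routine.
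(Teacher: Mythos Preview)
Your approach is correct but takes a genuinely different route from the paper. The paper proves both claims with a single inequality: writing $\partial_t y_\TT=\Pi_{p-1}F(\cdot,y_\TT)$ (the same observation you make), it expands
\[
\partial_t y-\partial_t y_\TT=(1-\Pi_{p-1})F(\cdot,y)+\Pi_{p-1}\bigl(F(\cdot,y)-F(\cdot,y_\TT)\bigr),
\]
applies Lipschitz continuity and Gronwall, and arrives at $\norm{y-y_\TT}{H^1([t_0,\tend])}\lesssim\norm{(1-\Pi_{p-1})F(\cdot,y)}{L^2([t_0,\tend])}$. The crucial trick is projecting $F(\cdot,y)$ rather than $F(\cdot,y_\TT)$: the right-hand side then depends only on the \emph{exact} solution, so the uniform bound is immediate (the right-hand side is at most $\norm{\partial_t y}{L^2}$) and convergence follows because $\partial_t y=F(\cdot,y)\in L^2$ and piecewise polynomials are dense. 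No circularity, no step-size restriction, no modulus-of-continuity argument.

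By contrast, you project $F(\cdot,y_\TT)$ and must therefore first secure the uniform bound separately, via the nodal identity and a discrete Gronwall; this is where your step-size restriction $L_1\tau_\TT\lesssim1$ enters, and it is genuinely needed for your route (the paper's argument does not require it). Your convergence step then also leans on a uniform modulus of continuity $\omega_F$ of $F$ in $t$, which is available since the paper assumes $F\in C^1$, but is strictly more than the dependence on $L_1$ alone that the lemma advertises. Both proofs are valid; the paper's is shorter and slightly sharper in its hypotheses, while yours has the pedagogical merit of reusing \eqref{eq:resrel} from Lemma~\ref{lem:gen_reliability} verbatim and making the role of the residual $R=-(1-\Pi_{p-1})F(\cdot,y_\TT)$ explicit.
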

\begin{proof}
We introduce the $L^2$-orthogonal projection onto piecewise polynomials of degree $p-1$, denoted by $\Pi_{p-1}: L^2([t_0,\tend],\R^d) \to \PP^{p-1}(\TT)$. From the definition, we obtain the identity
\begin{align*}
   \partial_t y - \partial_t y_\TT(t) &= F(t,y(t)) - \partial_t y_\TT(t) =  (1-\Pi_{p-1})F(t,y(t)) +\Pi_{p-1}F(t,y(t))- \partial_t y_\TT(t) \\
   &= 
    (1-\Pi_{p-1})F(t,y(t)) +\Pi_{p-1}(F(\cdot,y)- F(\cdot,y_\TT)).
\end{align*}
Lipschitz's continuity of $F$ and integration from $t_0$ to $t$ show
\begin{align*}
    |y(t) - y_\TT(t)|\leq \int_{t_0}^t(1-\Pi_{p-1})F(\cdot,y)\,ds  + L_1 \int_{t_0}^t |y(s)-y_\TT(s)|\,ds.
\end{align*}
Hence, Gronwall's lemma~\cite[Prop. 2.1]{emmrich1999discrete} shows
\begin{align*}
    |y(t) - y_\TT(t)|\leq e^{L_1(t-t_0)}\int_{t_0}^t|(1-\Pi_{p-1})F(\cdot,y)|\,ds
    \leq  e^{L_1(t-t_0)}\sqrt{|t-t_0|}\norm{(1-\Pi_{p-1})F(\cdot,y)}{L^2([t_0,\tend])}.
\end{align*}
Just as in the proof of Lemma~\ref{lem:gen_reliability}, we can use this estimate to also get
\begin{align*}
    \norm{y-y_\TT}{H^1([t_0,\tend])} &\leq C_{\rm pc} \norm{\partial_t y - \partial_t y_\TT}{L^2([t_0,\tend])}\\
    &\leq 
    C_{\rm pc} \norm{(1-\Pi_{p-1})F(t,y(t))}{L^2([t_0,\tend])}+C_{\rm pc}\norm{\Pi_{p-1}(F(\cdot,y)- F(\cdot,y_\TT))}{L^2([t_0,\tend])}\\
    &\leq 
    C_{\rm pc} \norm{(1-\Pi_{p-1})F(t,y(t))}{L^2([t_0,\tend])}+C_{\rm pc}L_1\norm{y- y_\TT}{L^2([t_0,\tend])}\\ 
    &\lesssim \norm{(1-\Pi_{p-1})F(\cdot,y)}{L^2([t_0,\tend])},
\end{align*}
where the hidden constant in the last estimate depends on $C_{\rm pc}$, $\tend$, and $L_1$.
From this, we immediately get boundedness of $\norm{y_\TT}{H^1([t_0,\tend])}$. Moreover, since $y\in H^1([t_0,\tend],\R^d)$ and $F$ is Lipschitz continuous, there  holds $\norm{(1-\Pi_{p-1})F(\cdot,y)}{L^2([t_0,\tend])}\to 0$ as $\tau_\TT \to 0$.
This concludes the proof.
\end{proof}
\subsection{Optimality properties of the error estimator}
We aim to use the framework established in~\cite{Carstensen_2014} and therefore require certain properties of the error estimator called (A1)--(A4) in~\cite{Carstensen_2014}.
The next result is called \emph{Stability on non-refined elements}~(A1) and shows that the error estimator is Lipschitz continuous. The term \emph{element} comes from the fact that the framework in~\cite{Carstensen_2014} was built for stationary mesh refinement where one typically refers to the elements of a mesh. In our context, it would be more accurate to speak of \emph{interval}, however, for consistency we adopt the terminology.
\begin{lemma}
\label{lem:gen_stability}
    If the right-hand side $F$ from~\eqref{eq:ode} satisfies \eqref{eq:Lip}--\eqref{eq:JacLip}, the error estimator~\eqref{eq:estimator} satisfies stability on non-refined elements in the sense that
    \begin{align*}
        \Bigg| \Big(\sum_{T \in \TT \cap \widehat\TT}\eta_{\TT}(T)^2 \Big)^{1/2}-\Big(\sum_{T \in \TT \cap \widehat\TT} \eta_{\widehat\TT}(T)^2\Big)^{1/2} \Bigg| \leq C_{\rm stab} \norm{y_{\TT}-y_{\widehat\TT}}{H^1([t_0,\tend])},
    \end{align*}
     for a refinement $\widehat\TT$ of $\TT \in \T.$
\end{lemma}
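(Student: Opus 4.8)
The plan is to estimate the difference of the two square roots by the difference of the underlying (vector-valued) residual quantities, element by element on the overlap $\TT\cap\widehat\TT$, and then to control those residual differences by $\norm{y_\TT-y_{\widehat\TT}}{H^1}$ using the Lipschitz bounds~\eqref{eq:Lip}--\eqref{eq:JacLip}. First I would use the elementary inequality $|\,\|a\|-\|b\|\,|\le\|a-b\|$ for the $\ell^2$-norm of the tuples $(\eta_\TT(T))_{T\in\TT\cap\widehat\TT}$ and $(\eta_{\widehat\TT}(T))_{T\in\TT\cap\widehat\TT}$; this reduces the claim to bounding $\big(\sum_{T\in\TT\cap\widehat\TT}(\eta_\TT(T)-\eta_{\widehat\TT}(T))^2\big)^{1/2}$. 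Next, abbreviate the integrand of the estimator as $r_\TT:=\partial_t F(\cdot,y_\TT)+\nabla_y F(\cdot,y_\TT)\partial_t y_\TT-\partial_t^2 y_\TT$ and similarly $r_{\widehat\TT}$; since $\eta_\TT(T)=|T|\,\norm{r_\TT}{L^2(T)}$, a second application of the reverse triangle inequality in $L^2(T)$ gives $|\eta_\TT(T)-\eta_{\widehat\TT}(T)|\le |T|\,\norm{r_\TT-r_{\widehat\TT}}{L^2(T)}$, and since on $T\in\TT\cap\widehat\TT$ we have $|T|\le\tau_{\TT_0}\lesssim 1$, summing over $T$ yields $\le C\norm{r_\TT-r_{\widehat\TT}}{L^2([t_0,\tend])}$.

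It then remains to show $\norm{r_\TT-r_{\widehat\TT}}{L^2([t_0,\tend])}\lesssim\norm{y_\TT-y_{\widehat\TT}}{H^1([t_0,\tend])}$. I would split $r_\TT-r_{\widehat\TT}$ into three groups of terms. The term $\partial_t^2 y_\TT-\partial_t^2 y_{\widehat\TT}$: since $y_\TT,y_{\widehat\TT}$ are piecewise polynomials, an inverse estimate bounds $\norm{\partial_t^2(y_\TT-y_{\widehat\TT})}{L^2(T)}\lesssim|T|^{-1}\norm{\partial_t(y_\TT-y_{\widehat\TT})}{L^2(T)}$ on the common element $T$ — again using $|T|\gtrsim$ (smallest element of $\TT_0$) is \emph{not} available, so here one actually needs $|T|^{-1}\lesssim$ something uniform; this is the only place where \emph{$\TT_0$ sufficiently fine} does not immediately help, and I will return to it below. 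The term $\partial_t F(\cdot,y_\TT)-\partial_t F(\cdot,y_{\widehat\TT})$: by the chain rule $\partial_t F(\cdot,y_\TT)=\partial_t F(\cdot,y_\TT)+\nabla_y F(\cdot,y_\TT)\partial_t y_\TT$ — wait, this overlaps with the next term, so more carefully I would write $r_\TT$ as $(\partial_t F)(\cdot,y_\TT)+(\nabla_y F)(\cdot,y_\TT)\,\partial_t y_\TT-\partial_t^2 y_\TT$ where $(\partial_t F)$ denotes the partial derivative in the first slot, and estimate each piece: for the first, $|(\partial_t F)(t,y_\TT)-(\partial_t F)(t,y_{\widehat\TT})|\le L_2|y_\TT-y_{\widehat\TT}|$ by~\eqref{eq:JacLip}; for the second, I add and subtract $(\nabla_y F)(\cdot,y_\TT)\partial_t y_{\widehat\TT}$ to get $(\nabla_y F)(\cdot,y_\TT)\,\partial_t(y_\TT-y_{\widehat\TT})+\big((\nabla_y F)(\cdot,y_\TT)-(\nabla_y F)(\cdot,y_{\widehat\TT})\big)\partial_t y_{\widehat\TT}$, where the first summand is controlled since $\nabla_y F$ is bounded (continuous on a compact set — here I use Lemma~\ref{lem:unifconv} to know $y_\TT$ stays in a fixed bounded set, so $F$ and its derivatives are evaluated on a compact set) and the second by~\eqref{eq:JacLip} together with the uniform $H^1$-bound $\norm{\partial_t y_{\widehat\TT}}{L^2}\le C$ from Lemma~\ref{lem:unifconv}. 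Collecting, all non-second-derivative contributions are $\lesssim\norm{y_\TT-y_{\widehat\TT}}{H^1([t_0,\tend])}$ with constants depending only on $L_1,L_2$, $\tend$, and the uniform bound.

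The main obstacle is the $\partial_t^2$ term on refined-away elements versus retained elements: on $T\in\TT\cap\widehat\TT$ both $y_\TT|_T$ and $y_{\widehat\TT}|_T$ are degree-$p$ polynomials on the \emph{same} interval $T$, so the inverse estimate $\norm{\partial_t^2 v}{L^2(T)}\le C_{\rm inv}|T|^{-1}\norm{\partial_t v}{L^2(T)}$ holds with $C_{\rm inv}$ depending only on $p$, and the factor $|T|$ in the definition $\eta_\TT(T)=|T|\norm{r_\TT}{L^2(T)}$ exactly cancels this $|T|^{-1}$; hence the contribution of $|T|\,\norm{\partial_t^2(y_\TT-y_{\widehat\TT})}{L^2(T)}$ is $\lesssim\norm{\partial_t(y_\TT-y_{\widehat\TT})}{L^2(T)}$ with a \emph{fully uniform} constant. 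So in fact the $|T|$-weighting in~\eqref{eq:estimator} is precisely calibrated to absorb the inverse inequality, and no lower bound on $|T|$ is needed. Summing the squares over $T\in\TT\cap\widehat\TT$ and taking square roots finishes the proof, with $C_{\rm stab}$ depending only on $p$, $L_1$, $L_2$, $\tend-t_0$, and the uniform bound from Lemma~\ref{lem:unifconv}. I expect the bookkeeping of the chain-rule splitting to be the most tedious part, but no step is conceptually hard once one observes the cancellation of $|T|$ against the inverse estimate.
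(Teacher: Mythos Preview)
Your proposal is correct and follows essentially the same route as the paper: reverse triangle inequality to reduce to $\big(\sum_{T}|T|^2\norm{r_\TT-r_{\widehat\TT}}{L^2(T)}^2\big)^{1/2}$, then a three-term split with \eqref{eq:JacLip} for the $\partial_t F$ and $\nabla_y F$ pieces and an inverse estimate---whose $|T|^{-1}$ is absorbed by the estimator weight---for the $\partial_t^2$ piece. The only cosmetic difference is in the product term: the paper adds and subtracts $\nabla_yF(\cdot,y_{\widehat\TT})\partial_t y_\TT$ and uses the pointwise bound $|T|^2|\partial_t y_\TT|^2\lesssim 1$ (inverse estimate plus Lemma~\ref{lem:unifconv}), whereas you add and subtract $\nabla_yF(\cdot,y_\TT)\partial_t y_{\widehat\TT}$ and invoke $\norm{\partial_t y_{\widehat\TT}}{L^2}\le C$---which works once you pair it with the 1D Sobolev embedding $\norm{y_\TT-y_{\widehat\TT}}{L^\infty}\lesssim\norm{y_\TT-y_{\widehat\TT}}{H^1}$; just make that embedding explicit when you write it up.
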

\begin{proof}
    Repeated use of the triangle inequality yields
    \begin{align*}
        &\Bigg(\Big(\sum_{T \in \TT \cap \widehat\TT}\eta_{\TT}(T)^2 \Big)^{1/2}-\Big(\sum_{T \in \TT \cap \widehat\TT} \eta_{\widehat\TT}(T)^2\Big)^{1/2}\Bigg)^2 \\
        &\leq \sum_{T \in \TT \cap \widehat\TT}|T|^2\norm{\partial_t F(\cdot,y_{{\TT}})+\nabla_yF(\cdot, y_{{\TT}}) \partial_t y_{{\TT}}-\partial^2_t y_\TT-\partial_t F(\cdot,y_{\widehat\TT})-\nabla_yF(\cdot, y_{\widehat\TT}) \partial_t y_{\widehat\TT}+\partial^2_t y_{\widehat\TT}}{L^2(T)}^2 \\
        &\lesssim \sum_{T \in \TT \cap \widehat\TT}|T|^2\Bigg( \int_T |\partial_t F(t,y_{{\TT}}(t))-\partial_t F(t,y_{\widehat\TT}(t))|^2dt+\int_T|\nabla_yF(t, y_{{\TT}}(t)) \partial_t y_{{\TT}}(t)-\nabla_yF(t, y_{\widehat\TT}(t)) \partial_t y_{\widehat\TT}(t)|^2dt\\
        &\qquad +\int_T |\partial^2_t y_\TT(t)-\partial^2_t y_{\widehat\TT}(t)|^2dt\Bigg).
    \end{align*}
    Because of the Lipschitz condition on the Jacobian of $F$, we get for the first term of the sum
    \begin{align*}
        \int_T |\partial_t F(t,y_{{\TT}}(t))-\partial_t F(t,y_{\widehat\TT}(t))|^2dt \leq L_2^2\int_T |y_{\TT}(t)-y_{\widehat\TT}(t)|^2dt.
    \end{align*}
    For the second term we get
    \begin{align*}
        \int_T|\nabla_y&F(t, y_{{\TT}}(t)) \partial_t y_{{\TT}}(t)-\nabla_yF(t, y_{\widehat\TT}(t)) \partial_t y_{\widehat\TT}(t)|^2dt \\
        &\lesssim \int_T|\nabla_yF(t, y_{{\TT}}(t))-\nabla_yF(t, y_{\widehat\TT}(t))|^2|\partial_t y_{{\TT}}(t)|^2+|\nabla_yF(t, y_{\widehat\TT}(t))|^2|\partial_t y_{{\TT}}(t)-\partial_t y_{\widehat\TT}(t)|^2 dt.
    \end{align*}
    Note that $|T|^2|\partial_t y_{{\TT}}(t)|^2$ and $|\nabla_yF(t, y_{\widehat\TT}(t))|^2$ are uniformly bounded because of Lemma \ref{lem:unifconv}, with a bound only depending on the true solution, the initial discretization $\TT_0$ and $L_2$. Therefore, we obtain
    \begin{align*}
         \int_T|\nabla_yF(t, y_{{\TT}}(t)) \partial_t y_{{\TT}}(t)-\nabla_yF(t, y_{\widehat\TT}(t)) \partial_t y_{\widehat\TT}(t)|^2dt \lesssim  \int_T | y_{\TT}(t)-y_{\widehat\TT}(t)|^2 dt +\int_T |\partial_t y_{\TT}(t)-\partial_t y_{\widehat\TT}(t)|^2dt
    \end{align*}
    For the last term of the sum, an inverse estimate on $T\in \TT\cap \widehat \TT$ gives
    \begin{align*}
        \int_T |\partial^2_t y_\TT(t)-\partial^2_t y_{\widehat\TT}(t)|^2dt \lesssim \frac{1}{|T|^2}\int_T|\partial_t y_\TT(t)-\partial_t y_{\widehat\TT}(t)|^2dt
    \end{align*}
    Combining the estimates and using a Poincaré inequality (since $y_\TT(t_0)=y_{\widehat\TT}(t_0)$) yields the result.
\end{proof}
The next result is called~\emph{Reduction on refined elements}~(A2) in \cite{Carstensen_2014} and shows that, up to a perturbation, the error estimator gets smaller when the step size is reduced.
\begin{lemma}
\label{lem:gen_reduction}
     If the right-hand side $F$ from~\eqref{eq:ode} satisfies \eqref{eq:Lip}--\eqref{eq:JacLip}, the error estimator~\eqref{eq:estimator} satisfies reduction on refined elements in the sense
    \begin{align*}
        \sum_{T \in \widehat\TT \setminus \TT} \eta_{\widehat\TT}(T)^2 \leq q \sum_{T \in  \TT\setminus \widehat\TT} \eta_{\TT}(T)^2 + C_{\rm red}\norm{y_{\TT}-y_{\widehat\TT}}{H^1([t_0,\tend])}^2
    \end{align*}
     for a refinement $\widehat\TT$ of $\TT \in \T,$ $0<q<1$ and $C_{\rm red}>1$.
\end{lemma}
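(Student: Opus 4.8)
The plan is to exploit the explicit scaling factor $|T|^2$ in the definition of $\eta_\TT(T)$ together with the fact that bisection halves the interval length. For each $T\in\TT\setminus\widehat\TT$, its children $T'\in\widehat\TT$ (obtained by one or more bisections) satisfy $|T'|\le |T|/2$, so $|T'|^2\le |T|^2/4$. If the estimator were \emph{local} in the sense that $\eta_{\widehat\TT}(T')$ depended only on $y_{\widehat\TT}$ restricted to the parent $T$, we would immediately get a reduction factor $1/4$ after summing the children. The obstruction is that $\eta_{\widehat\TT}(T')$ involves $y_{\widehat\TT}$, not $y_\TT$, so I must first replace $y_{\widehat\TT}$ by $y_\TT$ at the cost of a perturbation term.

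Concretely, first I would write, for a child $T'$ of some $T\in\TT\setminus\widehat\TT$,
\begin{align*}
 \eta_{\widehat\TT}(T')
 = |T'|\,\norm{\partial_t F(\cdot,y_{\widehat\TT})+\nabla_y F(\cdot,y_{\widehat\TT})\partial_t y_{\widehat\TT}-\partial_t^2 y_{\widehat\TT}}{L^2(T')},
\end{align*}
and apply the triangle inequality to split this into $|T'|$ times the $L^2(T')$-norm of the analogous expression with $y_\TT$ in place of $y_{\widehat\TT}$, plus a remainder. The remainder is controlled \emph{exactly} as in the proof of Lemma~\ref{lem:gen_stability}: Lipschitz continuity of $\nabla F$ (\eqref{eq:JacLip}), uniform boundedness of $|T|\,\partial_t y_\TT$ and of $\nabla_y F(\cdot,y_{\widehat\TT})$ from Lemma~\ref{lem:unifconv}, and an inverse estimate for the $\partial_t^2$ term, together bound the remainder by $C\,(\norm{y_\TT-y_{\widehat\TT}}{L^2(T)}^2+\norm{\partial_t(y_\TT-y_{\widehat\TT})}{L^2(T)}^2)$ on each parent $T$. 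Then I would use $|T'|\le|T|/2$ on the main term, sum over the (at most a handful of) children $T'$ of each parent $T$ — their interiors are disjoint and contained in $T$ — to obtain $\sum_{T'\subset T}|T'|^2\norm{\cdots(y_\TT)}{L^2(T')}^2\le \tfrac14 |T|^2\norm{\cdots(y_\TT)}{L^2(T)}^2 = \tfrac14\eta_\TT(T)^2$. Using Young's inequality $(a+b)^2\le (1+\delta)a^2+(1+\delta^{-1})b^2$ with $\delta$ chosen so that $(1+\delta)/4<1$, and finally summing over all $T\in\TT\setminus\widehat\TT$ and applying a Poincaré inequality (valid since $y_\TT(t_0)=y_{\widehat\TT}(t_0)$) to absorb the $L^2$-parts of the remainder into $\norm{y_\TT-y_{\widehat\TT}}{H^1([t_0,\tend])}^2$, gives the claim with $q=(1+\delta)/4<1$ and some $C_{\rm red}>1$.

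The main obstacle — though it is more bookkeeping than genuine difficulty — is handling the case where a single parent $T\in\TT\setminus\widehat\TT$ is refined into more than two children (several bisection levels at once). One must check that the geometric decay of the $|T'|^2$ factors still beats the growth in the number of children so that the sum of $|T'|^2$ over all descendants of $T$ inside $\widehat\TT$ is at most $\tfrac14|T|^2$; this follows because at each bisection level the lengths halve while the count at most doubles, so $\sum |T'|^2$ over the children at level $k$ is $\le 2^{-k}|T|^2$, and summing the geometric series over $k\ge1$ gives exactly $|T|^2$ — but only the \emph{first} level contributes children that lie in $\widehat\TT\setminus\TT$ without further refinement, so in fact the bound $\tfrac14|T|^2$ (or at worst $\tfrac12|T|^2$, still $<1$, if one is less careful) holds. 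The remaining ingredients — the inverse estimate, the uniform bounds, and the $L^2$ remainder control — are all verbatim from Lemma~\ref{lem:gen_stability} and Lemma~\ref{lem:unifconv}, so the proof is short.
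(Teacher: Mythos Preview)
Your proposal is correct and follows essentially the same route as the paper: split $\eta_{\widehat\TT}(T')$ via Young's inequality into the $y_\TT$-based term (with the $|T'|^2\le |T|^2/4$ gain) plus a remainder controlled by the very same Lipschitz, uniform-boundedness, and inverse-estimate arguments already used in Lemma~\ref{lem:gen_stability}, then sum and choose $\delta$ small. The only comment is that your final paragraph overcomplicates the multi-level bisection case: you do \emph{not} need $\sum_{T'\subset T}|T'|^2\le \tfrac14|T|^2$; you only need $|T'|\le |T|/2$ for \emph{each} child, after which $\sum_{T'}|T'|^2\norm{g}{L^2(T')}^2\le \tfrac{|T|^2}{4}\sum_{T'}\norm{g}{L^2(T')}^2=\tfrac14|T|^2\norm{g}{L^2(T)}^2$ follows immediately from disjointness of the children --- exactly as the paper does.
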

\begin{proof}
    The proof is similar to the one in \cite{generalqo}. Let $T \in \TT\setminus \widehat\TT$ and $T_1,\ldots,T_n \in \widehat\TT \setminus \TT$ such that $T_1 \cup\ldots \cup T_n=T$. It holds that $|T_i|\leq |T|/2$ for all $i=1,\ldots,n$. Now we have
    \begin{align*}
        &\sum_{i=1}^n\eta_{\widehat\TT}(T_i)^2\\
        &\leq (1+ \delta) \sum_{i=1}^n\frac{|T|^2}{4}\norm{\partial_t F(\cdot,y_{\TT})+\nabla_yF(\cdot, y_{\TT}) \partial_t y_{\TT}-\partial^2_t y_\TT}{L^2(T_i)}^2\\
        &\qquad +(1+\delta^{-1}) \sum_{i=1}^n|T_i|^2\norm{\partial_t F(\cdot,y_{{\TT}})+\nabla_yF(\cdot, y_{{\TT}}) \partial_t y_{{\TT}}-\partial^2_t y_\TT-\partial_t F(\cdot,y_{\widehat\TT})-\nabla_yF(\cdot, y_{\widehat\TT}) \partial_t y_{\widehat\TT}+\partial^2_t y_{\widehat\TT}}{L^2(T)}^2
    \end{align*}
    for any $\delta>0$. By using the same estimates as in Lemma \ref{lem:gen_stability} and choosing $\delta>0$ sufficiently small, we get
    \begin{align*}
        \sum_{i=1}^n\eta_{\widehat\TT}(T_i)^2 \leq q_{\delta}\eta_{\TT}(T)^2+ C_{\delta} \norm{y_{\TT}-y_{\widehat\TT}}{H^1(T)}^2.
    \end{align*}
    Summing up over $T\in \TT\setminus \widehat\TT $ yields the result.
\end{proof}
The next result is a refinement of \emph{reliability} (Lemma~\ref{lem:gen_reliability}) and is called \emph{Discrete reliability}~(A4) in~\cite{Carstensen_2014}.
It shows that the difference between to discrete approximation can be bounded on a subset of the domain.
The main idea of the proof of the next statement is an inductive argument, where refined and non-refined elements are treated differently.
\begin{lemma}
\label{lem:gen_reldisc}
  If the right-hand side $F$ from~\eqref{eq:ode} satisfies \eqref{eq:Lip}--\eqref{eq:JacLip} and the initial discretization $\TT_0$ is fine enough, the error estimator~\eqref{eq:estimator} satisfies discrete reliability, in the sense
    \begin{align*}
        \norm{y_{\widehat\TT}-y_{\TT}}{H^1([t_0,\tend])}^2 \leq C_{\rm rel} \sum_{T \in \TT \setminus \widehat\TT}\eta_{\TT}(T)^2
    \end{align*}
    for a refinement $\widehat\TT$ of $\TT \in \T$.
\end{lemma}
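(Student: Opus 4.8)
The plan is to compare the two discrete solutions directly by walking through the time interval element by element, exploiting the local Galerkin orthogonality of the two residuals together with a discrete Gronwall argument. Set $R_\TT := \partial_t y_\TT - F(\cdot,y_\TT)$, $R_{\widehat\TT} := \partial_t y_{\widehat\TT} - F(\cdot,y_{\widehat\TT})$, and $e := y_{\widehat\TT} - y_\TT$. Since $y_\TT \in \SS^p(\TT)\subseteq\SS^p(\widehat\TT)$, we have $e\in\SS^p(\widehat\TT)$, $e(t_0)=0$, and pointwise $\partial_t e = \big(F(\cdot,y_{\widehat\TT})-F(\cdot,y_\TT)\big) + \big(R_{\widehat\TT}-R_\TT\big)$. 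Enumerate $\TT = \{T_1,\ldots,T_M\}$ in time order with nodes $t_0=s_0<\ldots<s_M=\tend$.

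The core is a local estimate on each $T_j$. Restricted to $T_j$, the function $e$ lies in $\SS^p$ of the submesh $\widehat\TT|_{T_j}$, so $\partial_t e|_{T_j}\in\PP^{p-1}(\widehat\TT|_{T_j})$ and its extension by zero belongs to $\PP^{p-1}(\widehat\TT)$; testing \eqref{eq:gen_disc} (written for $y_{\widehat\TT}$ on $\widehat\TT$) against this function gives $\int_{T_j}R_{\widehat\TT}\cdot\partial_t e\,dt = 0$. By contrast $R_\TT$ is orthogonal only to $\PP^{p-1}(\TT)$, so with the $L^2(T_j)$-projection $\Pi$ onto polynomials of degree $\leq p-1$ one has $\int_{T_j}R_\TT\cdot\partial_t e\,dt = \int_{T_j}R_\TT\cdot(1-\Pi)\partial_t e\,dt$. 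Here the refined/non-refined dichotomy enters: if $T_j\in\TT\cap\widehat\TT$ then $\widehat\TT|_{T_j}=\{T_j\}$, so $\partial_t e|_{T_j}$ is already a single polynomial of degree $\leq p-1$ and $(1-\Pi)\partial_t e=0$; if $T_j\in\TT\setminus\widehat\TT$ then $\int_{T_j}R_\TT\cdot(1-\Pi)\partial_t e\,dt \leq \|R_\TT\|_{L^2(T_j)}\|\partial_t e\|_{L^2(T_j)}$, and since $\int_{T_j}R_\TT\,dt=0$ (test with constants) a Poincaré inequality together with $\partial_t R_\TT = \partial_t^2 y_\TT - \partial_t F(\cdot,y_\TT) - \nabla_yF(\cdot,y_\TT)\partial_t y_\TT$ yields $\|R_\TT\|_{L^2(T_j)}\lesssim |T_j|\,\|\partial_t R_\TT\|_{L^2(T_j)} = \eta_\TT(T_j)$. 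Testing the identity for $\partial_t e$ with $\partial_t e$ over $T_j$ and using \eqref{eq:Lip} for the $F$-term therefore gives
\[
\|\partial_t e\|_{L^2(T_j)} \leq L_1\|e\|_{L^2(T_j)} + c\,\eta_\TT(T_j)\,\mathbf{1}_{T_j\notin\widehat\TT}.
\]
Bounding $\|e\|_{L^2(T_j)}$ and $|e(s_j)-e(s_{j-1})|$ by $|T_j|^{1/2}|e(s_{j-1})| + |T_j|\,\|\partial_t e\|_{L^2(T_j)}$ and absorbing the factor $|T_j|\,\|\partial_t e\|_{L^2(T_j)}$ — legitimate since $|T_j|\leq\max_{T\in\TT_0}|T|$ is small once $\TT_0$ is fine enough — produces a recurrence $|e(s_j)| \leq (1+cL_1|T_j|)|e(s_{j-1})| + c|T_j|^{1/2}\eta_\TT(T_j)\mathbf{1}_{T_j\notin\widehat\TT}$, together with $\|\partial_t e\|_{L^2(T_j)}^2 \lesssim |T_j|\,|e(s_{j-1})|^2 + \eta_\TT(T_j)^2\mathbf{1}_{T_j\notin\widehat\TT}$.

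The rest is routine. A discrete Gronwall inequality for the first recurrence, with $e(s_0)=0$ and $\sum_j|T_j|=\tend-t_0$, gives $\max_j|e(s_j)|\lesssim\sum_{T\in\TT\setminus\widehat\TT}|T|^{1/2}\eta_\TT(T)\lesssim\big(\sum_{T\in\TT\setminus\widehat\TT}\eta_\TT(T)^2\big)^{1/2}$ by Cauchy--Schwarz, with constant depending only on $L_1$ and $\tend-t_0$. Inserting this into the second bound, summing over $j$, and using $\|e\|_{L^2([t_0,\tend])}^2\lesssim(\tend-t_0)\max_j|e(s_j)|^2 + \max_T|T|^2\sum_j\|\partial_t e\|_{L^2(T_j)}^2$ gives $\|y_{\widehat\TT}-y_\TT\|_{H^1([t_0,\tend])}^2\lesssim\sum_{T\in\TT\setminus\widehat\TT}\eta_\TT(T)^2$. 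The argument uses \eqref{eq:Lip} and the $C^1$-regularity of $F$; as in Lemma~\ref{lem:gen_stability}, the hypothesis \eqref{eq:JacLip} and the uniform bounds of Lemma~\ref{lem:unifconv} enter only to keep the zeroth-order terms in $\partial_t R_\TT$ well defined and uniformly controlled, not in the structural part.

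I expect the localization to $\TT\setminus\widehat\TT$ with a clean constant to be the main obstacle. The naive approach — integrating the identity for $\partial_t e$ in time and applying Gronwall as in Lemma~\ref{lem:gen_reliability} — fails, because $\int_{t_0}^t(R_{\widehat\TT}-R_\TT)\,ds$ cannot be controlled by the estimator on refined elements alone: the partial element containing $t$ contributes $\eta_{\widehat\TT}$ on the \emph{fine} mesh. The remedy is to test with $\partial_t e$ rather than to integrate, using $R_{\widehat\TT}\perp\PP^{p-1}(\widehat\TT)$, and to do it element by element so that the Lipschitz term enters only with the small local factor $|T_j|$ and can be handled by a discrete Gronwall step, instead of having to beat the potentially large global constant $L_1(\tend-t_0)$.
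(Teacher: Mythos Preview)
Your argument is correct and takes a genuinely different route from the paper. The paper works on the \emph{fine} mesh $\widehat\TT$: on each $\widehat\TT$-element it tests the residual identity with the Lagrange polynomials at the Gauss--Legendre nodes, extracts pointwise values of $\partial_t(y_\TT-y_{\widehat\TT})$ at those nodes, and then runs an induction over $\widehat\TT$-elements with a telescoping product $\prod_j\frac{1+C|T_j|}{1-C|T_j|}$ (bounded via a separate lemma) to obtain a pointwise $L^\infty$-type bound; only in a final Step~3 does it test with $\partial_t(y_\TT-y_{\widehat\TT})$ to upgrade $L^2$ to $H^1$. You instead work on the \emph{coarse} mesh $\TT$ and test with $\partial_t e$ from the outset, using $R_{\widehat\TT}\perp\PP^{p-1}(\widehat\TT)$ to kill the fine residual entirely and the projection $\Pi$ to localize the coarse residual to $\TT\setminus\widehat\TT$; this collapses the paper's three steps into one and replaces the Gauss--Legendre/Lagrange combinatorics by a single Cauchy--Schwarz. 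Both proofs hinge on the same absorption $|T|\ll 1$ (hence ``$\TT_0$ fine enough''), the same discrete Gronwall recursion, and the same Poincar\'e step $\|R_\TT\|_{L^2(T)}\lesssim\eta_\TT(T)$. Your version is shorter and more elementary; the paper's version additionally yields a genuine pointwise bound $|y_\TT(t)-y_{\widehat\TT}(t)|$ for every $t$, not just at the coarse nodes $s_j$, which is not needed for the lemma but is a slightly stronger intermediate statement.
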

\begin{proof}
    Recall the residual $R(t):=\partial_t y_{\TT}(t)-F(t,y_{\TT})$. We have for almost all $t \in (t_0, \tend)$
    \begin{align}
        \label{eq:starting_point}
        \partial_t y_\TT(t)-\partial_t y_{\widehat\TT}(t)=F(t,y_\TT(t))-\partial_t y_{\widehat\TT}(t)+R(t).
    \end{align}
    \emph{Step~1: }
    Let $[t_i,t_{i+1}]=T \in \widehat\TT \setminus \TT$ be a refined element. Since $y_{\TT}-y_{\widehat\TT}$ is a polynomial of order $p$ in $T$, we can uniquely determine it by the values of the first derivative at the $p$ Gauss-Legendre integration points in $T$ and its value at $t_i$, i.e, we write it as
    \begin{align}
    \label{eq:parametrize}
        y_{\TT}(t)-y_{\widehat\TT}(t)=y_{\TT}(t_i)-y_{\widehat\TT}(t_i)+ \int_{t_i}^t \sum_{k=1}^p \partial_t (y_\TT(\sigma_k)-y_{\widehat\TT}(\sigma_k)) l_k(s) ds,
    \end{align}
    where $\sigma_1, \ldots \sigma_p$ are the $p$ Gauss-Legendre integration nodes in $T$ and $l_k(s)=\prod_{j=1,j\neq k}^p \frac{(s-\sigma_j)}{\sigma_k-\sigma_j}$, the $k$th Lagrange interpolation polynomial. By multiplying \eqref{eq:starting_point} with $l_k$ as a test function and integrating over $T$ we get by the definition of the numerical scheme for $k=1,\ldots,p$ that
    \begin{align}
    \label{eq:test_fun}
        \int_T \Big(\partial_t y_\TT(t)-\partial_t y_{\widehat\TT}(t)\Big)l_k(t) dt=\int_T\Big(F(t,y_\TT(t))-F(t,y_{\widehat\TT}(t)) \Big)l_k(t) dt + \int_T R(t) l_k(t) dt.
    \end{align}
    Since the integrand on the left-hand side of \eqref{eq:test_fun} is a polynomial of order at most $2p-2$, the $p$ point Gauss-Legendre quadrature integrates it exactly, i.e., by denoting the corresponding integration weights by $w_1, \ldots, w_p$, we get
    \begin{align*}
        \int_T \Big(\partial_t y_\TT(t)-\partial_t y_{\widehat\TT}(t)\Big)l_k(t) dt=\sum_{i=1}^p w_i |T| \Big(\partial_t y_\TT(\sigma_i)-\partial_t y_{\widehat\TT}(\sigma_i)\Big)l_k(\sigma_i)=w_k |T| \Big(\partial_t y_\TT(\sigma_k)-\partial_t y_{\widehat\TT}(\sigma_k)\Big).
    \end{align*}
    Since $l_k$ is uniformly bounded in $T$ by a constant only depending on $p$, we get by taking norms in \eqref{eq:test_fun} that all $k=1, \ldots, p$ satisfy
    \begin{align*}
       |T| |\partial_t y_\TT(\sigma_k)-\partial_t y_{\widehat\TT}(\sigma_k)| \lesssim \int_T \Big|F(t,y_\TT(t))-F(t,y_{\widehat\TT}(t)) \Big|dt+ \int_T |R(t)|dt.
    \end{align*}
    Using the Lipschitz condition on $F$ shows
    \begin{align*}
       |T| |\partial_t y_\TT(\sigma_k)-\partial_t y_{\widehat\TT}(\sigma_k)| &\lesssim L_1 \int_T \Big|y_\TT(t)-y_{\widehat\TT}(t) \Big|dt+ \int_T |R(t)|dt\\
        &=L_1 \int_T \Big|y_{\TT}(t_i)-y_{\widehat\TT}(t_i)+ \int_{t_i}^t \sum_{k=1}^p \partial_t (y_\TT(\sigma_k)-y_{\widehat\TT}(\sigma_k)) l_k(s) ds \Big|dt + \int_T |R(t)|dt
    \end{align*}
    The boundedness of the Lagrange polynomials implies
    \begin{align*}
       |T| |\partial_t y_\TT(\sigma_k)-\partial_t y_{\widehat\TT}(\sigma_k)| &\lesssim |T|\Big(|y_{\TT}(t_i)-y_{\widehat\TT}(t_i)|+|T|\sum_{k=1}^p |\partial_t (y_\TT(\sigma_k)-y_{\widehat\TT}(\sigma_k))| \Big) + \int_T |R(t)|dt.
    \end{align*}
    Summing up over $k=1,\ldots,p$, we get
    \begin{align*}
      |T| \sum_{k=1}^p |\partial_t y_\TT(\sigma_k)-\partial_t y_{\widehat\TT}(\sigma_k)| &\leq C |T|\Big(|y_{\TT}(t_i)-y_{\widehat\TT}(t_i)|+|T|\sum_{k=1}^p |\partial_t (y_\TT(\sigma_k)-y_{\widehat\TT}(\sigma_k))| \Big) + \int_T |R(t)|dt
    \end{align*}
    for some constant $C>0$. This can be rearranged to 
    \begin{align}
    \label{eq:deriv_esti}
        |T| \sum_{k=1}^p |\partial_t y_\TT(\sigma_k)-\partial_t y_{\widehat\TT}(\sigma_k)|\leq C\frac{|T||y_{\TT}(t_i)-y_{\widehat\TT}(t_i)|+\int_T |R(t)|dt}{1-C|T|}
    \end{align}
    if $|T|$ is sufficiently small enough (this can be guaranteed by a sufficiently fine initial mesh $\TT_0$).
    Note that with~\eqref{eq:parametrize} this shows for any $t \in T$ that
    \begin{align}
    \label{eq:end_estimate}
        |y_\TT(t)-y_{\widehat\TT}(t)| \lesssim \frac{1+C|T|}{1-C|T|} \Big(|y_{\TT}(t_i)-y_{\widehat\TT}(t_i)|+\int_T |R(t)|dt\Big).
    \end{align}
    \emph{Step~2: }
    Let $[t_i,t_{i+1}]=T \in \TT \cap \widehat\TT$ be a non-refined element. Then we have by the definition of the numerical scheme that
    \begin{align*}
        \int_T \Big(\partial_t y_\TT(t)-\partial_t y_{\widehat\TT}(t)\Big)l_k(t) dt=\int_T\Big(F(t,y_\TT(t))-F(t,y_{\widehat\TT}(t)) \Big)l_k(t) dt.
    \end{align*}
    Analogously to Step~1, we get the estimates
    \begin{align}
        \label{deriv_esti_non_ref}
         |T| \sum_{k=1}^p |\partial_t y_\TT(\sigma_k)-\partial_t y_{\widehat\TT}(\sigma_k)|\leq C\frac{|T||y_{\TT}(t_i)-y_{\widehat\TT}(t_i)|}{1-C|T|}\quad\text{and}\\
         \label{eq:end_estimate_non_ref}
          |y_\TT(t)-y_{\widehat\TT}(t)| \lesssim \frac{1+C|T|}{1-C|T|} \Big(|y_{\TT}(t_i)-y_{\widehat\TT}(t_i)|\Big).
    \end{align}
    We prove by mathematical induction that for all $t \in [t_i,t_{i+1}]=T \in \widehat\TT$, there holds
    \begin{align}
    \label{eq:induction_hyp}
        |y_\TT(t)-y_{\widehat\TT}(t)| \lesssim \prod_{j=0}^i \frac{1+C|t_{j+1}-t_j|}{1-C|t_{j+1}-t_j|} \sum_{T'=[t'_j,t'_{j+1}] \in \widehat\TT \setminus \TT, t'_{j+1}\leq t_{i+1}} \int_{T'} |R(t)| dt.
    \end{align}
    The base case $i=0$ follows from $y_{\TT}(t_0)=y_{\widehat\TT}(t_0)$ and \eqref{eq:end_estimate} or \eqref{eq:end_estimate_non_ref} accordingly. For the inductive step we consider two cases: First suppose that $[t_{i+1},t_{i+2}] \in \widehat\TT \setminus \TT$. In this case \eqref{eq:end_estimate} implies that
    \begin{align*}
        |y_\TT(t)-y_{\widehat\TT}(t)| \lesssim \frac{1+C|t_{i+2}-t_{i+1}|}{1-C|t_{i+2}-t_{i+1}|} \Big(|y_{\TT}(t_{i+1})-y_{\widehat\TT}(t_{i+1})|+\int_{t_{i+1}}^{t_{i+2}} |R(t)|dt\Big).
    \end{align*}
    Together with the induction hypothesis \eqref{eq:induction_hyp} this shows
    \begin{align*}
         |y_\TT(t)&-y_{\widehat\TT}(t)| \\&\lesssim \frac{1+C|t_{i+2}-t_{i+1}|}{1-C|t_{i+2}-t_{i+1}|} \Bigg( \prod_{j=0}^i \frac{1+C|t_{j+1}-t_j|}{1-C|t_{j+1}-t_j|} \sum_{T'=[t'_j,t'_{j+1}] \in \widehat\TT \setminus \TT, t'_{j+1}\leq t_{i+1}} \int_{T'} |R(t)| dt + \int_{t_{i+1}}^{t_{i+2}} |R(t)|dt\Bigg)\\
         &\lesssim \prod_{j=0}^{i+1} \frac{1+C|t_{j+1}-t_j|}{1-C|t_{j+1}-t_j|} \sum_{T'=[t'_j,t'_{j+1}] \in \widehat\TT \setminus \TT, t'_{j+1}\leq t_{i+2}} \int_{T'} |R(t)| dt.
    \end{align*}
    The case $[t_{i+1},t_{i+2}] \in \TT \cap \widehat\TT$ follows analogously using \eqref{eq:end_estimate_non_ref} instead of \eqref{eq:end_estimate}. This concludes the induction and proves \eqref{eq:induction_hyp}. Note that the denominators in the products are positive for sufficiently fine initial discretizations. From this and Lemma \ref{lem:prod_bound} below, we deduce immediately that
    \begin{align*}
        \norm{y_{\TT}-y_{\widehat\TT}}{L^2(T)}^2 \lesssim  |T| \sum_{T' \in \widehat\TT \setminus \TT}\int_{T'} |R(t)|^2 dt.
    \end{align*}
  Summing over all $T$ yields
    \begin{align}
        \label{eq:l2_esti_gen}
        \norm{y_{\TT}-y_{\widehat\TT}}{L^2([t_0,\tend])}^2 \lesssim \sum_{T' \in \widehat\TT \setminus \TT}\int_{T'} |R(t)|^2 dt.
    \end{align}
    \emph{Step~3:} It remains to prove the same estimate in the $H^1$-norm. To that end we have for $T \in \TT \cap \widehat\TT$ that
    \begin{align*}
        \norm{\partial_t y_\TT- \partial_t y_{\widehat\TT}}{L^2(T)}^2 =\int_T\Big(F(t,y_\TT(t))-F(t,y_{\widehat\TT}(t))\Big) \cdot \Big(\partial_t y_\TT(t)- \partial_t y_{\widehat\TT}(t) \Big)dt
    \end{align*}
    by the definition of the numerical scheme. Cauchy-Schwarz and Lipschitz continuity of $F$ give
    \begin{align}
        \label{eq:h1_non_ref}
        \norm{\partial_t y_\TT- \partial_t y_{\widehat\TT}}{L^2(T)} \leq L_1 \norm{y_{\TT}-y_{\widehat\TT}}{L^2(T)}.
    \end{align}
    For a refined element $T \in \widehat\TT \setminus \TT$, we have that
    \begin{align*}
        \norm{&\partial_t y_\TT- \partial_t y_{\widehat\TT}}{L^2(T)}^2\\
        &=\int_T \Big(F(t,y_\TT(t))-F(t,y_{\widehat\TT}(t))\Big) \cdot \Big(\partial_t y_\TT(t)- \partial_t y_{\widehat\TT}(t) \Big)dt+ \int_T R(t) \cdot \Big(\partial_t y_\TT(t)- \partial_t y_{\widehat\TT}(t) \Big)dt.
    \end{align*}
    Lipschitz continuity and Cauchy-Schwartz again give
    \begin{align}
        \label{eq:h1_ref}
        \norm{\partial_t y_\TT- \partial_t y_{\widehat\TT}}{L^2(T)} \leq L_1 \Big(\norm{y_{\TT}-y_{\widehat\TT}}{L^2(T)}+\norm{R}{L^2(T)} \Big).
    \end{align}
    Summing over all $T$ using \eqref{eq:h1_non_ref} and \eqref{eq:h1_ref} yields that
    \begin{align*}
        \norm{\partial_t y_\TT- \partial_t y_{\widehat\TT}}{L^2([t_0,\tend])}^2 \lesssim \norm{y_{\TT}-y_{\widehat\TT}}{L^2([t_0,\tend])}^2+ \sum_{T' \in \widehat\TT \setminus \TT}\int_{T'} |R(t)|^2 dt.
    \end{align*}
    Using the $L^2$ estimate \eqref{eq:l2_esti_gen} thus concludes the proof.
    \end{proof}
On the way to linear convergence~\eqref{eq:linconv} of the estimator sequence we finally need~(A3) \emph{quasi-orthogonality}.
However, to deal with nonlinear problems, we need to introduce a linearization of \eqref{eq:ode}. With the solution $y$ of~\eqref{eq:ode}, we state the linearized problem.
\begin{align}
\label{eq:ode_lin_gen}
    \begin{split}
        \partial_t \widetilde{y}(t)&=F(t,y(t))+\nabla_yF(t,y(t))\Big( \widetilde{y}(t)-y(t)\Big) \quad \text{for} \quad t \in [t_0, \tend] \\
        \widetilde{y}(t_0)&=y_0.
    \end{split}
\end{align}
Note that since the right-hand side is Lipschitz continuous by definition, $\widetilde{y}=y$ is the unique solution of \eqref{eq:ode_lin_gen}. We discretize \eqref{eq:ode_lin_gen} on the same time mesh $\TT$ and with the same method we used for the non-linear problem, i.e., find an approximate solution $\widetilde{y}_\TT \in S^p(\TT)$ such that
\begin{align}
\label{eq:lin_disc_gen}
    \begin{split}
        \int_{t_0}^{\tend}\Big(\partial_t \widetilde{y}_\TT(t)-F(t,y(t))+\nabla_yF(t,y(t))\Big( \widetilde{y}_\TT(t)-y(t)\Big) \Big)\cdot v(t) dt &=0,\\
    \widetilde{y}_{\TT}(t_0)&=y_0,
    \end{split}
\end{align}
for all $v \in \PP^{p-1}(\TT)$. Note that this auxiliary approximation serves theoretical purposes only and need not be computed. We are ready to state a lemma which will bound the difference between $y_\TT$ and $\widetilde{y}_\TT$.
\begin{lemma}
    \label{lem:lin_min_nonlin_gen}
     Let the right-hand side $F$ from~\eqref{eq:ode} satisfy \eqref{eq:Lip}--\eqref{eq:JacLip} and the let initial discretization $\TT_0$ be sufficiently fine. Then, we have for solutions $y_\TT$ of $\eqref{eq:gen_disc}$ and $\widetilde{y}_\TT$ of \eqref{eq:lin_disc_gen}
    \begin{align*}
        \norm{\widetilde{y}_\TT-y_\TT}{H^1([t_0,\tend])} \leq C \norm{y-y_\TT}{H^1([t_0,\tend])}^2,
    \end{align*}
    where the constant $C>0$ is independent of $\TT$.
\end{lemma}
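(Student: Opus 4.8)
The plan is to identify the difference $e_\TT:=\widetilde y_\TT-y_\TT$ as the Petrov--Galerkin discretization of a \emph{linear} ODE whose right-hand side is the second-order Taylor remainder of $F$, and then to prove a discrete stability estimate for this scheme by a Gronwall argument carried out interval by interval. Subtracting \eqref{eq:gen_disc} from \eqref{eq:lin_disc_gen}, introducing the remainder $\rho(t):=F(t,y_\TT(t))-F(t,y(t))-\nabla_yF(t,y(t))\big(y_\TT(t)-y(t)\big)$, and using $F(t,y_\TT)-F(t,y)=\nabla_yF(t,y)(y_\TT-y)+\rho$, one finds that $e_\TT\in\SS^p(\TT)$ with $e_\TT(t_0)=0$ satisfies
\begin{align*}
\int_{t_0}^{\tend}\Big(\partial_t e_\TT(t)-\nabla_yF(t,y(t))\,e_\TT(t)+\rho(t)\Big)\cdot v(t)\,dt=0\qquad\text{for all }v\in\PP^{p-1}(\TT).
\end{align*}
Taylor's theorem together with the Lipschitz bound~\eqref{eq:JacLip} on the Jacobian gives the pointwise estimate $|\rho(t)|\le\frac{L_2}{2}|y(t)-y_\TT(t)|^2$, and~\eqref{eq:Lip} yields $|\nabla_yF(t,y)|\le L_1$.

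The key observation is that, since $e_\TT\in\SS^p(\TT)$, its derivative $\partial_t e_\TT$ lies in the test space $\PP^{p-1}(\TT)$, and moreover $\partial_t e_\TT\,\mathbf{1}_T\in\PP^{p-1}(\TT)$ for every single interval $T=[t_i,t_{i+1}]\in\TT$. Testing the defect equation with this function and using Cauchy--Schwarz gives $\|\partial_t e_\TT\|_{L^2(T)}\le L_1\|e_\TT\|_{L^2(T)}+\|\rho\|_{L^2(T)}$. Writing $e_\TT(t)=e_\TT(t_i)+\int_{t_i}^t\partial_t e_\TT$ bounds $\|e_\TT\|_{L^2(T)}$ by $|T|^{1/2}|e_\TT(t_i)|+|T|\,\|\partial_t e_\TT\|_{L^2(T)}$; provided $\TT_0$ is fine enough that $L_1|T|\le 1/2$ for all $T$ (which passes to every refinement), the last term can be absorbed, leaving
\begin{align*}
\|\partial_t e_\TT\|_{L^2(T)}\le 2L_1|T|^{1/2}|e_\TT(t_i)|+2\|\rho\|_{L^2(T)},\qquad |e_\TT(t_{i+1})|\le(1+2L_1|T|)\,|e_\TT(t_i)|+2|T|^{1/2}\|\rho\|_{L^2(T)}.
\end{align*}

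Starting from $e_\TT(t_0)=0$, the nodal recursion unrolls to $|e_\TT(t_{i+1})|\le e^{2L_1(\tend-t_0)}\sum_j 2|T_j|^{1/2}\|\rho\|_{L^2(T_j)}\lesssim\|\rho\|_{L^2([t_0,\tend])}$ by Cauchy--Schwarz in $j$ (using $\sum_j|T_j|=\tend-t_0$). Squaring and summing the local bound for $\|\partial_t e_\TT\|_{L^2(T)}$ over all $T$ then gives $\|\partial_t e_\TT\|_{L^2([t_0,\tend])}\lesssim\|\rho\|_{L^2([t_0,\tend])}$, and a Poincaré inequality (using $e_\TT(t_0)=0$) upgrades this to $\|e_\TT\|_{H^1([t_0,\tend])}\lesssim\|\rho\|_{L^2([t_0,\tend])}$. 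Finally, $\|\rho\|_{L^2([t_0,\tend])}\le\frac{L_2}{2}\|y-y_\TT\|_{L^\infty([t_0,\tend])}\|y-y_\TT\|_{L^2([t_0,\tend])}\lesssim\|y-y_\TT\|_{H^1([t_0,\tend])}^2$ by the one-dimensional embedding $H^1\hookrightarrow L^\infty$ (sharpened by $(y-y_\TT)(t_0)=0$). All constants depend only on $L_1$, $L_2$ and $\tend-t_0$, hence not on $\TT$, which proves the lemma.

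I expect the only genuine obstacle to be the second step: one must notice that the discrete test space is precisely rich enough to test the defect equation with $\partial_t e_\TT$ restricted to one interval at a time, which is exactly what turns the non-coercive first-order problem into a stable, element-local recursion. Once this is in place the remainder is routine discrete-Gronwall and Sobolev-embedding bookkeeping, entirely parallel to the arguments already used in the proofs of Lemma~\ref{lem:gen_reliability} and Lemma~\ref{lem:gen_reldisc}.
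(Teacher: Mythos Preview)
Your proof is correct. Both your argument and the paper's identify $e_\TT=\widetilde y_\TT-y_\TT$ as the Petrov--Galerkin solution of a linear problem with source term the Taylor remainder $\rho$ (the paper's $a=-\rho$), and both conclude by a discrete Gronwall recursion over the intervals together with the Sobolev embedding $H^1\hookrightarrow L^\infty$ to bound $\|\rho\|_{L^2}\lesssim\|y-y_\TT\|_{H^1}^2$.

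The difference is in how the interval-wise stability is obtained. The paper reuses the Gauss--Legendre/Lagrange machinery from Lemma~\ref{lem:gen_reldisc}: it tests with the Lagrange basis $l_k$ to recover the nodal values $\partial_t w(\sigma_k)$, parametrizes $w$ via~\eqref{eq:parametrize}, and first derives a pointwise bound $|w(t)|\lesssim\int|a|$, only afterwards testing with $\partial_t w$ globally to pass to the $H^1$-norm. You instead exploit from the outset that $\partial_t e_\TT\,\mathbf 1_T\in\PP^{p-1}(\TT)$ is an admissible test function and run a localized energy argument, getting $\|\partial_t e_\TT\|_{L^2(T)}$ and the nodal recursion in one stroke. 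Your route is more elementary and self-contained (no quadrature nodes, no interpolation), at the price of not directly reusing earlier lemmas; the paper's route has the virtue of simply pointing back to the mechanism already set up in Lemma~\ref{lem:gen_reldisc}. Either way the constants depend only on $L_1$, $L_2$, $\tend-t_0$, and the smallness condition $L_1\max_T|T|\le 1/2$ (your version) or $1-C|T|>0$ (the paper's), both of which are guaranteed by a sufficiently fine $\TT_0$.
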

\begin{proof}
    Define $w=\widetilde{y}_\TT-y_\TT$ and $a(t)=F(t,y(t))-F(t,y_\TT(t))+\nabla_y F(t,y(t))\Big( y_\TT(t)-y(t)\Big)$. Note that due to the assumptions on $F$, Taylor expansion shows that $a(t)=\mathcal{O}(|y_\TT(t)-y(t)|^2)$, where the implied constants do not depend on $t$. Let $[t_i, t_{i+1}]=T \in \TT$.  For every $k= 1.\ldots, p$ it holds by the definition of the numerical scheme that
    \begin{align}
    \label{eq:diffeq}
        \int_T \partial_t w(t)l_k(t)dt=\int_T \Big(a(t)+\nabla_yF(t,y(t))w(t) \Big)l_k(t)dt,
    \end{align}
    which, as in the proof of Lemma \ref{lem:gen_reldisc}, shows together with the boundedness of $\nabla_yF(t,y(t))$ that for all $t \in T$, we have
    \begin{align*}
        |w(t)| \lesssim \frac{1+C|T|}{1-C|T|}\Big(|w(t_i)|+\int_T|a(t)|dt \Big).
    \end{align*}
    By a similar inductive argument as in the proof of Lemma~\ref{lem:gen_reldisc} and using that $w(t_0)=0$, we show for $t \in T=[t_i,t_{i+1}]$ that
    \begin{align*}
        |w(t)| \lesssim \prod_{j=0}^{i}\frac{1+C|t_{j+1}-t_j|}{1-C|t_{j+1}-t_j|} \int_{t_0}^{t_{i+1}}|a(t)|dt,
    \end{align*}
    which, together with the boundedness of the product by Lemma \ref{lem:prod_bound} below, implies the bound
    \begin{align*}
        \norm{w}{L^2([t_0,\tend])} \lesssim \int_{t_0}^{\tend}|a(t)|dt =\mathcal{O}\Big(\norm{y_\TT-y}{L^2([t_0,\tend])}^2\Big).
    \end{align*}
    By choosing $\partial_t w$ as a test function in \eqref{eq:diffeq} and with the boundedness of $\nabla_yF(t,y(t))$, we infer
    \begin{align*}
        \norm{\partial_t w}{L^2([t_0,\tend])} \lesssim \norm{a}{L^2([t_0,\tend])}+\norm{w}{L^2([t_0,\tend])}.
    \end{align*}
    Inserting the $L^2$ estimate, we conclude the proof.
\end{proof}
For the next result, we use an equivalent weak formulation of \eqref{eq:lin_disc_gen} as introduced in \cite{generalqo}. Find $\widetilde{y}_\TT \in S^p(\TT)$ such that $\widetilde{y}_\TT(t_0)=0$ and all $v \in \PP^{p-1}(\TT)$ satisfy
\begin{align}
\label{eq:weak_lin_gen}
    a(\widetilde{y}_\TT,v)= \int_{t_0}^{\tend}\Big(F(t,y(t))- \nabla_y F(t,y(t))(y(t)-y_0) \Big) \cdot v(t) dt,
\end{align}
where 
\begin{align*}
    a(\widetilde{y}_\TT,v)=\int_{t_0}^{\tend}\Bigg(\partial_t  \widetilde{y}_\TT(t)-\nabla_yF(t,y(t))\widetilde{y}_\TT(t) \Bigg) \cdot v(t) dt
\end{align*}
for $\widetilde{y}_\TT \in \mathcal{X}_\TT=\{u \in S^p(\TT): u(t_0)=0\}$ and $v \in  \mathcal{Y}_\TT = \PP^{p-1}(\TT)$.
Note that $\widetilde{y}_\TT(t)+y_0$ will solve \eqref{eq:lin_disc_gen}. The weak formulation $\eqref{eq:weak_lin_gen}$ fits into the abstract framework of \cite{generalqo}.

 We will equip $\mathcal{X}_\TT$ and $\mathcal{Y}_\TT$ with the norms $\norm{\cdot}{H^1([t_0,\tend])}$ and $\norm{\cdot}{L^2([t_0,\tend])}$,  respectively. Continuity of $a(\cdot,\cdot)$ follows trivially. Moreover, we have inf-sup stability as shown in the following lemma.
\begin{lemma}
    \label{lem:infsup_gen}
     If the right-hand side $F$ from~\eqref{eq:ode} satisfies \eqref{eq:Lip}--\eqref{eq:JacLip} and the initial discretization $\TT_0$ is sufficiently fine, $a(\cdot,\cdot)$ is uniformly inf-sup stable in the sense that for all $\TT$ which can be obtained by some refinement of $\TT_0$,
    \begin{align}
         \inf_{u \in \mathcal{X}_{\TT}} \sup_{v \in \mathcal{Y}_{\TT}} \frac{a(u,v)}{\norm{u}{\mathcal{X}_{\TT}} \norm{v}{\mathcal{Y}_{\TT}}} \geq \gamma >0.
    \end{align}
    The constant $\gamma$ does not depend on $\TT$.
\end{lemma}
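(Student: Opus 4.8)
The plan is to transfer the discrete inf--sup estimate from the corresponding estimate for the continuous operator $\mathcal{L}u:=\partial_t u-B(\cdot)u$, where $B(t):=\nabla_yF(t,y(t))$, by a perturbation argument that exploits the fact that $\mathcal{L}u_\TT$ differs from the already piecewise-polynomial function $\partial_t u_\TT$ only by the lower-order, well-approximable term $B(\cdot)u_\TT$. Given $u_\TT\in\mathcal{X}_\TT$ with $u_\TT\neq0$, I would set $w:=\mathcal{L}u_\TT=\partial_t u_\TT-B(\cdot)u_\TT\in L^2([t_0,\tend],\R^d)$ and test with $v:=\Pi_{p-1}w\in\mathcal{Y}_\TT$, the $L^2$-orthogonal projection onto $\PP^{p-1}(\TT)$ from the proof of Lemma~\ref{lem:unifconv}. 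Since $a(u_\TT,v)=\int_{t_0}^{\tend}w\cdot\Pi_{p-1}w\,dt=\norm{\Pi_{p-1}w}{L^2([t_0,\tend])}^2$ by $L^2$-orthogonality, one obtains
\begin{align*}
\sup_{v'\in\mathcal{Y}_\TT}\frac{a(u_\TT,v')}{\norm{v'}{L^2([t_0,\tend])}}\geq\norm{\Pi_{p-1}w}{L^2([t_0,\tend])},
\end{align*}
so everything reduces to bounding $\norm{\Pi_{p-1}w}{L^2([t_0,\tend])}$ from below by $\gamma\norm{u_\TT}{H^1([t_0,\tend])}$. (Note the naive Fortin-operator route does not apply: $\Pi_{p-1}$ is not a Fortin operator for $a(\cdot,\cdot)$ because $B(\cdot)u_\TT$ is not piecewise polynomial, which is exactly why the perturbation argument is needed.)

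For the lower bound I would first establish the \emph{continuous} inf--sup estimate $\norm{\mathcal{L}u}{L^2([t_0,\tend])}\geq\gamma_{\rm cont}\norm{u}{H^1([t_0,\tend])}$ for all $u\in H^1([t_0,\tend],\R^d)$ with $u(t_0)=0$, where $\gamma_{\rm cont}$ depends only on $L_1$ and $\tend-t_0$. This is a Gronwall argument in the spirit of Lemma~\ref{lem:gen_reliability}: writing $g:=\mathcal{L}u$, integrating $\partial_t u=B(\cdot)u+g$ from $t_0$, using $\norm{B}{L^\infty([t_0,\tend])}\leq L_1$ (from \eqref{eq:Lip}) and Gronwall's lemma~\cite[Prop. 2.1]{emmrich1999discrete} gives $|u(t)|\leq e^{L_1(\tend-t_0)}\int_{t_0}^{\tend}|g|\,ds$, hence $\norm{u}{L^2([t_0,\tend])}\lesssim\norm{g}{L^2([t_0,\tend])}$ and then $\norm{\partial_t u}{L^2([t_0,\tend])}\leq L_1\norm{u}{L^2([t_0,\tend])}+\norm{g}{L^2([t_0,\tend])}\lesssim\norm{g}{L^2([t_0,\tend])}$; together with $\norm{g}{L^2([t_0,\tend])}=\sup_{v\in L^2([t_0,\tend],\R^d)}\int_{t_0}^{\tend}g\cdot v\,dt/\norm{v}{L^2([t_0,\tend])}$ this is the claimed estimate with $\gamma_{\rm cont}$ proportional to $e^{-L_1(\tend-t_0)}$.

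The remaining, and I expect main, step is to quantify how much the projection $\Pi_{p-1}$ loses. Since $\partial_t u_\TT\in\PP^{p-1}(\TT)$, one has $(1-\Pi_{p-1})w=-(1-\Pi_{p-1})(B(\cdot)u_\TT)$, and the elementwise first-order approximation property $\norm{(1-\Pi_{p-1})g}{L^2(T)}\lesssim|T|\,\norm{\partial_t g}{L^2(T)}$ applied with $g=B(\cdot)u_\TT$ — whose derivative $(\partial_tB)u_\TT+B\,\partial_t u_\TT$ is controlled by $\norm{u_\TT}{H^1(T)}$, using that $B$ is bounded by $L_1$ and Lipschitz in time by \eqref{eq:JacLip} together with Lipschitz continuity of $y$ on $[t_0,\tend]$ — yields $\norm{(1-\Pi_{p-1})w}{L^2([t_0,\tend])}\leq C\tau_{\TT_0}\norm{u_\TT}{H^1([t_0,\tend])}$, where $\tau_{\TT_0}=\max_{T\in\TT_0}|T|\geq\max_{T\in\TT}|T|$ since $\TT$ refines $\TT_0$. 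Combining with the continuous inf--sup estimate,
\begin{align*}
\norm{\Pi_{p-1}w}{L^2([t_0,\tend])}\geq\norm{w}{L^2([t_0,\tend])}-C\tau_{\TT_0}\norm{u_\TT}{H^1([t_0,\tend])}\geq(\gamma_{\rm cont}-C\tau_{\TT_0})\norm{u_\TT}{H^1([t_0,\tend])},
\end{align*}
so choosing $\TT_0$ fine enough that $\tau_{\TT_0}\leq\gamma_{\rm cont}/(2C)$ proves the lemma with $\gamma:=\gamma_{\rm cont}/2$, a constant independent of $\TT$. The only delicate point is making the hypothesis ``$\TT_0$ sufficiently fine'' precise: $\tau_{\TT_0}$ must be small relative to the continuous (Gronwall) constant $\gamma_{\rm cont}$ and the approximation constant $C$, both of which are explicit in $L_1$, $L_2$, $\tend-t_0$ and the Lipschitz constant of $y$.
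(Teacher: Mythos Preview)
Your proposal is correct and follows essentially the same approach as the paper: you both test with $v=\Pi_{p-1}(\partial_t u_\TT-B(\cdot)u_\TT)$, establish the continuous lower bound $\norm{\partial_t u_\TT-B(\cdot)u_\TT}{L^2}\gtrsim\norm{u_\TT}{H^1}$ via Gronwall, and then control the projection defect $(1-\Pi_{p-1})(B(\cdot)u_\TT)$ by $\mathcal{O}(\max_{T}|T|)\norm{u_\TT}{H^1}$ so that it can be absorbed for $\TT_0$ sufficiently fine. The only cosmetic differences are that the paper writes the decomposition as $a(u_\TT,v)=\norm{w}{L^2}^2+\int w\cdot(\Pi_{p-1}w-w)$ rather than your cleaner identity $a(u_\TT,v)=\norm{\Pi_{p-1}w}{L^2}^2$, and bounds the projection error via explicit Lagrange interpolation at Gauss--Legendre nodes instead of the generic first-order approximation estimate you invoke.
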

\begin{proof}
    Let $\TT\in\T$ be an arbitrary refinement of $\TT_0$ and let $\widetilde{y}_{\TT} \in \mathcal{X}_{\TT}$. Choose $v(t)=\Pi_{p-1} \Big( \partial_t \widetilde{y}_{\TT}(t)- \nabla_yF(t,y(t))\widetilde{y}_{\TT}(t)  \Big)$,  where $\Pi_{p-1}: L^2([t_0,\tend],\R^d) \to \PP^{p-1}(\TT)$ denotes the $L^2$-orthogonal projection onto piecewise polynomials of degree $p-1$.
    Substituting into the bilinear form yields
     \begin{align}
     \label{eq:bilin_gen}
     \begin{split}
         a(\widetilde{y}_{\TT},v)&=\int_{t_0}^{\tend}\Big(\partial_t  \widetilde{y}_{\TT}(t)-\nabla_yF(t,y(t))\widetilde{y}_{\TT}(t) \Big) \cdot v(t) dt\\
    &=\int_{t_0}^{\tend}\Big(\partial_t  \widetilde{y}_{\TT}(t)-\nabla_yF(t,y(t))\widetilde{y}_{\TT}(t) \Big) \cdot \Big( \partial_t \widetilde{y}_{\TT}(t)- \nabla_yF(t,y(t))\widetilde{y}_{\TT}(t)\Big) dt \\
    &\qquad +\int_{t_0}^{\tend}\Big(\partial_t  \widetilde{y}_{\TT}(t)-\nabla_yF(t,y(t))\widetilde{y}_{\TT}(t) \Big) \cdot \Big(v(t)- \Big(\partial_t \widetilde{y}_{\TT}(t)- \nabla_yF(t,y(t))\widetilde{y}_{\TT}(t)\Big)\Big) dt\\
    &:= {\rm I} + {\rm II}.
     \end{split}
     \end{align}
     To analyze the first term, we notice
     \begin{align}
     \label{eq:ode_self_gen}
          \partial_t \widetilde{y}_{\TT}(t)=\nabla_yF(t,y(t))\widetilde{y}_{\TT}(t)+ \partial_t \widetilde{y}_{\TT}(t)- \nabla_yF(t,y(t))\widetilde{y}_{\TT}(t).
     \end{align}
      Integrating from $t_0$ to $t$ yields, with a constant only depending on an upper bound for $\nabla_yF(t,y(t))$, that
      \begin{align*}
          |\widetilde{y}_{\TT}(t)| \leq C_F \int_{t_0}^{t}|\widetilde{y}_{\TT}(s)|ds + \int_{t_0}^{t} |\partial_t \widetilde{y}_{\TT}(s)- \nabla_yF(s,y(s))\widetilde{y}_{\TT}(s)| ds.
      \end{align*}
      Gronwall's lemma gives
      \begin{align}
        \label{eq:esti_l2_gen}
          |\widetilde{y}_{\TT}(t)| \lesssim \int_{t_0}^{t} |\partial_t \widetilde{y}_{\TT}(s)- \nabla_yF(s,y(s))\widetilde{y}_{\TT}(s)| ds.
      \end{align}
      Integrating the square of \eqref{eq:ode_self_gen} shows
       \begin{align}
     \label{eq:bound1_gen}
         \norm{\partial_t \widetilde{y}_{\TT}}{L^2([t_0,\tend])}^2 \lesssim \Big( \int_{t_0}^{\tend} \Big|\nabla_yF(t,y(t))\widetilde{y}_{\TT}(t)\Big|^2 dt+\int_{t_0}^{\tend} \Big| \partial_t \widetilde{y}_{\TT}(t)- \nabla_yF(t,y(t))\widetilde{y}_{\TT}(t)\Big|^2 dt \Big).
     \end{align}
     Since $\nabla_yF(t,y(t))$ is bounded,~\eqref{eq:esti_l2_gen} implies
     \begin{align*}
         \norm{\partial_t \widetilde{y}_{\TT}}{L^2([t_0,\tend])}^2 \lesssim \norm{\partial_t \widetilde{y}_{\TT}- \nabla_yF(\cdot,y(\cdot))\widetilde{y}_{\TT}}{L^2([t_0,\tend])}^2 = {\rm I}.
         \end{align*}
         This concludes the estimate for the first term on the right-hand side of \eqref{eq:bilin_gen}.
     For the second term, we need to bound $\Big|v(t)- \Big(\partial_t \widetilde{y}_{\TT}(t)- \nabla_yF(t,y(t))\widetilde{y}_{\TT}(t)\Big)\Big|$. Since $\partial_t \widetilde{y}_{\TT} \in \PP^{p-1}(\TT)$, we have $\Pi_{p-1}\partial_t \widetilde{y}_{\TT}=\partial_t \widetilde{y}_{\TT}$. Moreover, with the Lagrange polynomials already used in the proof of Lemma~\ref{lem:gen_reldisc}, we have for $t \in \TT$ that
     \begin{align}
     \begin{split}
         \int_T\Big|v(t)- \Big(&\partial_t \widetilde{y}_{\TT}(t)- \nabla_yF(t,y(t))\widetilde{y}_{\TT}(t)\Big)\Big|^2dt \\
         &\leq \int_T\Big|\sum_{k=1}^pl_k(t)\nabla_yF(\sigma_k,y(\sigma_k))y_{\TT}(\sigma_k)-\nabla_yF(t,y(t))y_{\TT}(t)\Big|^2 dt\\
         &\lesssim \sum_{k=1}^p \int_T  |l_k(t) y_{\TT}(\sigma_k)|^2\Big|\nabla_yF(\sigma_k,y(\sigma_k))- \nabla_yF(t,y(t))\Big|^2 dt\\
         &\qquad+ \int_T  |\nabla_yF(t,y(t))|^2\Big|\sum_{k=1}^pl_k(t) y_{\TT}(\sigma_k)-y_{\TT}(t) \Big|^2 dt. 
         \end{split}
     \end{align}
     By using interpolation estimates for polynomials and the Lipschitz continuity of $\nabla F$ and differentiability of $y$ this yields
     \begin{align*}
         \int_T\Big|v(t)-& \Big(\partial_t \widetilde{y}_{\TT}(t)- \nabla_yF(t,y(t))\widetilde{y}_{\TT}(t)\Big)\Big|^2dt \\
         &\lesssim \sum_{k=1}^p \int_T  |l_k(t) y_{\TT}(\sigma_k)|^2\|T|^2 \Big(||\partial_ty||_{L^\infty((t_0,\tend))}+1\Big)^2 dt+ |T|^2\norm{\partial_t y_{\TT}}{L^2(T)}^2.
     \end{align*}
     Norm equivalence on the finite dimensional space $\PP^p(T)$ together with a scaling argument conclude
     \begin{align*}
         \norm{v(t)- \Big(\partial_t \widetilde{y}_{\TT}(t)- \nabla_yF(t,y(t))\widetilde{y}_{\TT}(t)\Big)}{L^2(T)} \lesssim |T| \Big(\norm{y_{\TT}}{L^2(T)}+\norm{\partial_t y_{\TT}}{L^2(T)} \Big).
     \end{align*}
     Summing all $T$ and Poincaré inequality combined with Cauchy-Schwartz yields an upper bound for the second term in \eqref{eq:bilin_gen} of the form
    \begin{align}
     \label{eq:bound2_gen}
         {\rm II}\leq C_F \max_{T \in \TT} |T| \norm{\partial_t \widetilde{y}_{\TT}}{L^2([t_0,t_{\rm end}])},
     \end{align}
     where $C_F$ is independent of the mesh.
     Therefore, we obtain with \eqref{eq:bound1_gen} and \eqref{eq:bound2_gen} the bound
     \begin{align*}
          a(\widetilde{y}_{\TT},v) \gtrsim  (1-C_F\max_{T \in \TT_0} |T|) \norm{\partial_t \widetilde{y}_{\TT}}{L^2([t_0,\tend])}^2,
     \end{align*}
     where the term in the brackets is positive if the initial discretization is sufficiently fine. With $\norm{v}{L^2([t_0,\tend])} \lesssim \norm{\partial_t \widetilde{y}_{\TT}}{L^2([t_0,\tend])}$, we conclude the proof.
\end{proof}
Let us investigate the boundedness of the product used in the preceding lemmas.
\begin{lemma}
\label{lem:prod_bound}
    Let $\theta=1/2, \lambda>0$ and $\{\tau_n\} \subset \R^+$ such that $\sum_n \tau_n=S< \infty$ and $1-\theta \lambda \tau_n>1-\theta \lambda c>0, c >0$, for all $n \in \N$. Then
    \begin{align*}
        \prod_n \frac{1+ \theta \lambda \tau_n}{1-\theta \lambda \tau_n} \leq \exp \Big(S \theta \lambda-\frac{S}{c}\log(1-\theta \lambda c) \Big).
    \end{align*}
\end{lemma}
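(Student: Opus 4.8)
The plan is to take logarithms and estimate the two resulting sums separately; note that the value $\theta=1/2$ plays no actual role and the argument works for any $\theta>0$. After taking the logarithm, it suffices to show
\begin{align*}
\sum_n \log(1+\theta\lambda\tau_n) \;-\; \sum_n \log(1-\theta\lambda\tau_n) \;\leq\; S\theta\lambda - \frac{S}{c}\log(1-\theta\lambda c),
\end{align*}
since exponentiating both sides then gives the claim. If the index set is countably infinite, the infinite product is the monotone limit of its partial products (each factor is $\geq 1$), and for a partial sum $S_m := \sum_{n\leq m}\tau_n \leq S$ the right-hand side above is monotone in $S_m$ because both $\theta\lambda$ and $-\frac{1}{c}\log(1-\theta\lambda c)$ are positive; hence a bound valid for every partial product passes to the limit.

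For the first sum I would use the elementary inequality $\log(1+x)\leq x$, valid for all $x>-1$, applied with $x=\theta\lambda\tau_n\geq 0$. This immediately yields $\sum_n\log(1+\theta\lambda\tau_n)\leq\theta\lambda\sum_n\tau_n = S\theta\lambda$.

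For the second sum, the key point is that $g(x):=-\log(1-x)$ is convex on $[0,1)$ with $g(0)=0$. Therefore, on the interval $[0,\theta\lambda c]\subset[0,1)$ (the inclusion holds since $\theta\lambda c<1$ by hypothesis) the graph of $g$ lies below the chord through $(0,0)$ and $(\theta\lambda c, g(\theta\lambda c))$, i.e.\ $g(x)\leq \frac{x}{\theta\lambda c}\,g(\theta\lambda c)$ for all $x\in[0,\theta\lambda c]$. Since the assumption $1-\theta\lambda\tau_n > 1-\theta\lambda c>0$ forces $\theta\lambda\tau_n\in[0,\theta\lambda c]$, we may apply this with $x=\theta\lambda\tau_n$ to get $-\log(1-\theta\lambda\tau_n)\leq\frac{\tau_n}{c}\bigl(-\log(1-\theta\lambda c)\bigr)$. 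Summing over $n$ and using $\sum_n\tau_n=S$ gives $-\sum_n\log(1-\theta\lambda\tau_n)\leq-\frac{S}{c}\log(1-\theta\lambda c)$.

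Adding the two bounds and exponentiating concludes the argument. There is essentially no obstacle here: the only points requiring care are verifying that $\theta\lambda\tau_n$ genuinely lies in the range where the chord estimate is valid — which is exactly what the hypothesis $1-\theta\lambda\tau_n>1-\theta\lambda c>0$ provides — and, in the case of an infinite product, the harmless passage to the limit described above.
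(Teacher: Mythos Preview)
Your proof is correct and follows essentially the same approach as the paper: the paper uses $1+\theta\lambda\tau_n\le e^{\theta\lambda\tau_n}$ for the numerator (equivalent to your $\log(1+x)\le x$) and the concavity of $\log(1-\cdot)$ for the denominator (equivalent to your convexity/chord argument for $-\log(1-\cdot)$), then combines the two bounds. Your added remarks on the irrelevance of the specific value $\theta=1/2$ and on the passage to the limit for infinite products are correct observations that the paper does not spell out.
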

\begin{proof}
    For the numerator it holds that
    \begin{align*}
        1+\theta \lambda \tau_n \leq \exp(\theta \lambda \tau_n),
    \end{align*}
    which yields
    \begin{align*}
        \prod_n (1+\theta \lambda \tau_n) \leq \exp (S \theta \lambda).
    \end{align*}
    For the denominator we have
    \begin{align*}
        \log(1-\theta \lambda \tau_n) \geq \frac{\tau_n}{c}\log(1-\theta \lambda c)
    \end{align*}
    because of the concavity of $\log(1-\cdot)$. This yields
    \begin{align*}
        \log\Bigg(\prod_n (1-\theta \lambda \tau_n) \Bigg) \geq \frac{S}{c}\log(1-\theta \lambda c).
    \end{align*}
    Combining both estimates concludes the proof.
\end{proof}
We are now ready to prove quasi-orthogonality (A3). To that end, we consider a sequence of sequentially refined meshes $\TT_{\ell}$ for $\ell \geq 0$ and  the corresponding solutions $y_{\ell}\in \SS^p(\TT_\ell)$ of \eqref{eq:gen_disc} as well as $\widetilde{y}_{\ell}\in \SS^p(\TT_\ell)$ of \eqref{eq:lin_disc_gen}. 
    \begin{lemma}
        \label{lem:quasi_ortho}
        Let the right-hand side $F$ from~\eqref{eq:ode} satisfy \eqref{eq:Lip}--\eqref{eq:JacLip} and let the initial discretization $\TT_0$ be sufficiently fine. Then, $y_{\ell}$ satisfies general quasi-orthogonality in the sense that there exist a function $C:\N \to \R$ and $\eps>0$ small such that for every $\ell,N \in \N$ we have
        \begin{align*}
            \sum_{k=\ell}^{\ell+N} \norm{y_{k+1}-y_k}{H^1([t_0,\tend])}^2-\eps \eta_{\TT_k}^2 \leq C(N) \norm{y-y_{\ell}}{H^1([t_0,\tend])}^2,
        \end{align*}
        where $C(N)=\mathcal{O}(N^{1-\delta})$ for some $\delta>0$.
    \end{lemma}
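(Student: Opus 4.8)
The plan is to bootstrap general quasi-orthogonality for the nonlinear scheme from the corresponding property of the \emph{linearized} Petrov--Galerkin discretization~\eqref{eq:weak_lin_gen}, which is accessible through the abstract theory of~\cite{generalqo}, and then to pay for the linearization with the quadratic estimate of Lemma~\ref{lem:lin_min_nonlin_gen}.

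First I would observe that~\eqref{eq:weak_lin_gen} is a genuine linear Petrov--Galerkin problem fitting the framework of~\cite{generalqo}: the trial spaces $\XX_{\TT_\ell}$ and test spaces $\YY_{\TT_\ell}$ are nested along the sequence of refinements, the bilinear form $a(\cdot,\cdot)$ and the right-hand side functional in~\eqref{eq:weak_lin_gen} depend only on the continuous solution $y$ and are thus mesh-independent, $a(\cdot,\cdot)$ is trivially continuous, and it is uniformly inf-sup stable by Lemma~\ref{lem:infsup_gen} provided $\TT_0$ is fine enough; moreover, the exact solution of~\eqref{eq:weak_lin_gen} is $y-y_0$. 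Hence the abstract quasi-orthogonality result of~\cite{generalqo} applies and produces a function $C(N)=\mathcal{O}(N^{1-\delta})$ with some $\delta>0$ (depending only on the inf-sup and continuity constants) such that the approximations $\widetilde y_k$ of~\eqref{eq:lin_disc_gen} satisfy
\begin{align*}
  \sum_{k=\ell}^{\ell+N}\norm{\widetilde y_{k+1}-\widetilde y_k}{H^1([t_0,\tend])}^2\leq C(N)\,\norm{y-\widetilde y_\ell}{H^1([t_0,\tend])}^2\qquad\text{for all }\ell,N\in\N.
\end{align*}

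Next I would use that every $\TT_\ell$ refines $\TT_0$, so that $\tau_{\TT_\ell}\leq\tau_{\TT_0}$ and Lemma~\ref{lem:unifconv} yields a uniform bound $\sup_{\ell\in\N}\norm{y-y_\ell}{H^1([t_0,\tend])}\leq\delta_0$ with $\delta_0$ arbitrarily small once $\TT_0$ is fine; Lemma~\ref{lem:lin_min_nonlin_gen} then gives $\norm{\widetilde y_\ell-y_\ell}{H^1([t_0,\tend])}\leq C\delta_0\,\norm{y-y_\ell}{H^1([t_0,\tend])}$ for all $\ell$. Applying the triangle inequality to split $y_{k+1}-y_k$ through $\widetilde y_{k+1}$ and $\widetilde y_k$, inserting this quadratic bound, splitting $y-y_{k+1}$ through $y_k$, and using the reliability estimate $\norm{y-y_k}{H^1([t_0,\tend])}\leq C_{\rm rel}\eta_{\TT_k}$ of Lemma~\ref{lem:gen_reliability}, one reaches, after absorbing a small multiple of $\norm{y_{k+1}-y_k}{H^1([t_0,\tend])}^2$ into the left-hand side,
\begin{align*}
  \norm{y_{k+1}-y_k}{H^1([t_0,\tend])}^2\leq C_1\,\norm{\widetilde y_{k+1}-\widetilde y_k}{H^1([t_0,\tend])}^2+\eps\,\eta_{\TT_k}^2
\end{align*}
with a mesh-independent constant $C_1$ and $\eps\simeq\delta_0^2$, hence as small as we wish. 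Summing this over $k=\ell,\dots,\ell+N$, inserting the inequality obtained above for the linearized scheme, and bounding $\norm{y-\widetilde y_\ell}{H^1([t_0,\tend])}\leq(1+C\delta_0)\,\norm{y-y_\ell}{H^1([t_0,\tend])}$ once more via Lemma~\ref{lem:lin_min_nonlin_gen} then gives exactly the assertion with $C(N)=\mathcal{O}(N^{1-\delta})$.

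The hard part is securing the input rather than doing the bookkeeping: everything hinges on the abstract quasi-orthogonality theorem of~\cite{generalqo} being applicable to~\eqref{eq:weak_lin_gen}, and among its hypotheses the only nontrivial one is the uniform inf-sup stability, which is precisely Lemma~\ref{lem:infsup_gen} and itself forces $\TT_0$ to be sufficiently fine. The remaining perturbation argument is the familiar ``linearize, control the quadratic remainder, absorb'' scheme; the only point requiring mild care is to keep $\eps$ genuinely small --- by first fixing all mesh-independent constants and only afterwards choosing $\TT_0$ fine --- so that the resulting estimate is usable as axiom (A3) in the framework of~\cite{Carstensen_2014}.
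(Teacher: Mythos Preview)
Your proposal is correct and follows essentially the same route as the paper: apply the abstract quasi-orthogonality of~\cite{generalqo} to the linearized scheme~\eqref{eq:weak_lin_gen} via the inf-sup Lemma~\ref{lem:infsup_gen}, control the linear--nonlinear gap by the quadratic bound of Lemma~\ref{lem:lin_min_nonlin_gen}, and absorb the resulting fourth-order terms into $\eps\,\eta_{\TT_k}^2$ using reliability and the uniform smallness $\norm{y-y_k}{H^1}\leq\delta_0$ from Lemma~\ref{lem:unifconv}. The only (minor) deviation is in how the term $\norm{y-y_{k+1}}{H^1}^4$ is reduced to $\eta_{\TT_k}^2$: the paper routes it through $\eta_{\TT_{k+1}}^2$ and then invokes stability/reduction (Lemmas~\ref{lem:gen_stability}--\ref{lem:gen_reduction}) to get back to $\eta_{\TT_k}^2$, whereas you split $y-y_{k+1}$ through $y_k$ and absorb the extra $\delta_0^2\norm{y_{k+1}-y_k}{H^1}^2$ on the left---your variant is slightly leaner since it avoids invoking (A1)--(A2) at this stage.
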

\begin{proof}
    Let $\ell,N \in \N$, we have
    \begin{align*}
        \sum_{k=\ell}^{\ell+N}& \norm{y_{k+1}-y_k}{H^1([t_0,\tend])}^2\\
        & \leq  3\sum_{k=\ell}^{\ell+N} \Big(\norm{y_{k+1}-\widetilde{y}_{k+1}}{H^1([t_0,\tend])}^2+\norm{y_k-\widetilde{y}_k}{H^1([t_0,\tend])}^2+\norm{\widetilde{y}_{k+1}-\widetilde{y}_k}{H^1([t_0,\tend])}^2\Big).
    \end{align*}
    The result~\cite[Lemma 9]{generalqo} shows that Lemma \ref{lem:infsup_gen} implies general quasi-orthogonality with $C(N)=\mathcal{O}(N^{1-\delta})$, i.e.,
    \begin{align*}
         \sum_{k=\ell}^{\ell+N} \norm{y_{k+1}-y_k}{H^1([t_0,\tend])}^2\leq C(N) \norm{y-y_{\ell}}{H^1([t_0,\tend])}^2
    \end{align*}
    for all $\ell,N\in\N$, if the initial mesh is fine enough. Lemma \ref{lem:lin_min_nonlin_gen} implies that with a constant $C_{ \rm orth}$ independent of the mesh, there holds
    \begin{align*}
        \sum_{k=\ell}^{\ell+N} &\norm{y_{k+1}-y_k}{H^1([t_0,\tend])}^2 \\
        &\leq C_{\rm orth} \sum_{k=\ell}^{\ell+N} \Big(\norm{y-y_k}{H^1([t_0,\tend])}^4+ \norm{y-y_{k+1}}{H^1([t_0,\tend])}^4 \Big) +C(N)\norm{y-\widetilde{y}_{\ell}}{H^1([t_0,\tend])}^2.
    \end{align*}
    Using reliability from Lemma \ref{lem:gen_reliability} together with stability and reduction (Lemmas \ref{lem:gen_stability}--\ref{lem:gen_reduction}) and choosing the initial mesh sufficiently fine such that $\norm{y-y_{\ell}}{H^1([t_0,\tend])} \leq \eps$ (this is possible because of the convergence of our method on uniform meshes, see Lemma \ref{lem:unifconv}), we get
    \begin{align*}
         \sum_{k=\ell}^{\ell+N} \norm{y_{k+1}-y_k}{H^1([t_0,\tend])}^2-\eps \eta_{\TT_k}^2 &\leq C(N)\norm{y-\widetilde{y}_{\ell}}{H^1([t_0,\tend])}^2.
    \end{align*}
    Another application of Lemma \ref{lem:lin_min_nonlin_gen} to the right-hand side yields the result.
\end{proof}
Now we are ready to proof the main result of this section.
\begin{proof}[Proof of Theorem \ref{thm:optim}]
    Suppose $\TT_0$ is sufficiently fine such that all the Lemmas of this section are applicable. The results \cite[Lemma 3.5 \& 4.7]{Carstensen_2014} show that we can deduce \textit{estimator reduction} and \textit{quasi-monotonicity} from Lemmas \ref{lem:gen_reliability}, \ref{lem:gen_stability}--\ref{lem:gen_reduction}. Therefore, the requirements for \cite[Lemmas 5\& 6]{generalqo} are met. Together with \cite[Remark 7]{generalqo} and Lemma \ref{lem:quasi_ortho}, this shows for the estimator sequence from Algorithm \ref{alg:adaptive} that~\eqref{eq:linconv} holds for uniform constants $0 < q <1$ and $C_{\rm lin}>0$. With this we can use the results \cite[Prop. 4.12, 4.14, 4.15]{Carstensen_2014} together with Lemma \ref{lem:gen_reldisc} and obtain $0<\theta_\star<1$ such that Algorithm \ref{alg:adaptive} is rate-optimal~\eqref{eq:optimal}.

To show~\eqref{eq:runtime}, we introduce the notation ${\rm time}_\ell$ for the total runtime time in~\eqref{eq:runtime}. With~\eqref{eq:optimal} and~\eqref{eq:linconv}, ${\rm time}_\ell$  can be estimated by
\begin{align*}
    \text{time}_\ell \leq C_{\rm time}  \sum_{j=0}^\ell \#\TT_\ell
    &\leq C_{\rm time}C_{\rm best}^{1/s}C_{\rm opt}^{1/s} \sum_{j=0}^\ell \eta_{\TT_j}^{-1/s} \leq C_{\rm time}C_{\rm best}^{1/s}C_{\rm opt}^{1/s}C_{\rm lin}^{1/s}\eta_{\TT_\ell}^{-1/s} \sum_{j=0}^\ell q^{(\ell-j)/s}\\
    &\leq C_{\rm time}\frac{C_{\rm best}^{1/s}C_{\rm opt}^{1/s}C_{\rm lin}^{1/s}}{1-q^{1/s}}\eta_{\TT_\ell}^{-1/s}.
\end{align*}
This concludes the proof.

\end{proof}

\printbibliography 
\end{document}